\def\R{\mathbb{R}}
\def\A{\mathcal{A}}
\def\Span{\operatorname{Span}}
\def\pr{\operatorname{pr}}
\def\Crit{\operatorname{Crit}}
\def\diag{\operatorname{diag}}
\def\Im{\operatorname{Im}}
\def\ad{\operatorname{ad}}
\def\Ad{\operatorname{Ad}}
\def\grad{\operatorname{grad}}
\def\Hom{\operatorname{Hom}}
\def\End{\operatorname{End}}
\def\Hess{\operatorname{Hess}}
\def\Sp{\operatorname{Sp}}
\def\GL{\operatorname{GL}}
\def\U{\operatorname{U}}
\def\O{\operatorname{O}}
\def\Mat{\operatorname{Mat}}
\def\Gr{\operatorname{Gr}}
\def\Tr{\operatorname{Tr}}
\def\Id{\operatorname{Id}}
\def\sp{\mathfrak{sp}}
\def\so{\mathfrak{so}}
\def\u{\mathfrak{u}}
\def\k{\mathfrak{k}}
\def\p{\mathfrak{p}}
\newtheorem{theorem}{Theorem}[section]
\newtheorem*{theorem*}{Theorem}
\newtheorem{proposition}[theorem]{Proposition}
\newtheorem{remark}[theorem]{Remark}
\newtheorem{definition}[theorem]{Definition}
\newtheorem{lemma}[theorem]{Lemma}
\newtheorem*{example}{Example}
\newtheorem{corollary}[theorem]{Corollary}
\newtheorem*{corollary*}{Corollary}
\newif\ifdebug                                           %
\title{A Morse-Bott unification of the Grassmannians of a symplectic vector space}
\author{Hyunmoon Kim\thanks{Institute of Mathematical Sciences, Ewha Womans University. Email: \texttt{hyunmoon.kim@ewha.ac.kr}}}
\date{\today}
\begin{document}
\maketitle

\begin{abstract}
We construct a quadratic Morse-Bott function on the real Grassmannian of a symplectic vector space from a compatible linear complex structure. We show that its critical loci consist of linear subspaces that split into isotropic and complex parts and that its stable manifolds coincide with the orbits of the linear symplectomorphism group. These orbits generalize the Lagrangian, symplectic, isotropic, and coisotropic Grassmannians to include the Grassmannians of linear subspaces that are neither isotropic, coisotropic, nor symplectic. The negative gradient flow deformation retracts these spaces onto compact homogeneous spaces for the unitary group.
\end{abstract}

\section{Introduction}

The interaction between a symplectic form $\omega$ and a compatible almost complex structure $J$ is central to symplectic topology. For instance, in the study of $J$-holomorphic curves, it is a standard principle that complex submanifolds minimize energy within their homology class.

At the linear level, this relationship is captured by the \textit{Kähler angle} $\theta_W$ of a $2$-plane $W$ (Definition \ref{def:kahler_angle} or see \cite{Scharnhorst} and references therein), which measures the deviation of $W$ from being a complex line. It is well known that $\theta_W = 0$ when $W$ is complex and $\theta_W = \pi/2$ when $W$ is isotropic. While Harvey-Lawson \cite{HarveyLawson} and Donaldson \cite{Donaldson} work with a multiplicative generalization, we present an additive generalization to higher dimensions (Proposition \ref{prop:energy_decomposition}), together with a concrete geometric model via \textbf{Morse-Bott theory} \cite{Bott, AustinBraam}. Our Morse-Bott function assigns a number to \textit{all} linear subspaces of a symplectic vector space according to their compatibility with $J$.

Let $(V, \omega, J)$ be a symplectic vector space with a compatible linear complex structure. For any subspace $W \subset V$, let $P_W$ denote the orthogonal projection onto $W$. We define the energy function on the Grassmannian of $k$-planes $f: \Gr(k; V) \to \R$ by:
\begin{equation}
    f(W) := \frac{1}{2} \Tr([P_W, J]^2).
\end{equation}
This function measures the failure of the subspace to be $J$-invariant. We also interpret this quantity dynamically (Lemma \ref{lem:commutators}) as the kinetic energy of the fundamental vector field $X_J$ generated by the $\U(1)$-action of $J$ on the Grassmannian:
\begin{equation}
    f(W) = \frac{1}{2} \| X_J(W) \|^2.
\end{equation}
This property ensures $f$ is minimized on complex subspaces.

On the other hand, because $f$ involves a commutator with a projection operator, the value of $f$ counts dimensions for subspaces that are suitably ``aligned'' with $J$:

\begin{theorem}[Integrality of critical values, Theorem \ref{thm:morse_bott}, Proposition \ref{prop:critical_values}]
The function $f$ is a Morse-Bott function with integral critical values. Specifically, the energy of a critical subspace is exactly the dimension of its isotropic kernel :
\begin{equation}
    f(W) = \dim_\R(W \cap W^\omega) \quad \text{for all } W \in \Crit(f),
\end{equation}
where $W^\omega$ is the symplectic complement of $W$ (Eq. \eqref{eq:symplectic_complement}).
\end{theorem}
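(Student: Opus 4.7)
The plan is to characterize $\Crit(f)$ directly via the commutation of two orthogonal projections, and then read off the value of $f$ at a critical point from the induced eigenspace decomposition. The Morse--Bott property itself is Theorem~\ref{thm:morse_bott}; the present statement needs only the critical-locus description and the integral formula $f(W) = \dim_\R(W \cap W^\omega)$.

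First, I would parametrize $T_W \Gr(k; V)$ by writing $\dot P_W = A + A^T$ with $A \in \Hom(W, W^\perp)$ extended by zero to $V$, so that $A + A^T$ exhausts the symmetric operators that are off-diagonal with respect to $V = W \oplus W^\perp$. Differentiating $f$, using cyclicity of the trace and the symmetry $[P_W, J]^T = [P_W, J]$ (which follows from $P_W^T = P_W$ and $J^T = -J$), one obtains
\begin{equation}
df_W(A + A^T) = -\Tr\bigl((A + A^T)\,[[P_W, J], J]\bigr).
\end{equation}
Since $[[P_W, J], J]$ is itself symmetric, its pairing against every symmetric off-diagonal operator vanishes if and only if it is block-diagonal, equivalently commutes with $P_W$. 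The relation $J^2 = -I$ collapses $[[P_W, J], J]$ to $-2 P_W - 2 J P_W J$, and $J P_W J = -P_{JW}$ (immediate from the orthogonality of $J$) reduces the critical equation to
\begin{equation}
[P_W, P_{JW}] = 0.
\end{equation}

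Two orthogonal projections commute precisely when $V$ splits orthogonally into their four common eigenspaces,
\begin{equation}
V = (W \cap JW) \oplus (W \cap JW^\perp) \oplus (W^\perp \cap JW) \oplus (W^\perp \cap JW^\perp).
\end{equation}
The compatibility $\omega(u, v) = \langle Ju, v\rangle$ together with $JW^\perp = (JW)^\perp$ identifies $W \cap JW^\perp$ as the isotropic radical $I_W := W \cap W^\omega$, while $W \cap JW$ is the maximal $J$-invariant (hence complex) subspace $C_W \subseteq W$. Thus the critical subspaces are exactly those admitting an orthogonal splitting $W = C_W \oplus I_W$ with $C_W$ complex and $I_W$ isotropic. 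Evaluating $[P_W, J]$ blockwise, it vanishes on $C_W$ (there $P_W$ commutes with $J$) and on $W^\perp \cap JW^\perp$ (there both $P_W$ and $P_W J$ act as zero). On the remaining block $I_W \oplus JI_W$, an orthonormal basis $(e_i, Je_i)$ of $I_W \cup JI_W$ gives $[P_W, J]e_i = -Je_i$ and $[P_W, J]Je_i = -e_i$, so $[P_W, J]^2$ restricts to the identity there. Taking half the trace yields
\begin{equation}
f(W) = \tfrac{1}{2}\Tr([P_W, J]^2) = \dim_\R I_W = \dim_\R(W \cap W^\omega).
\end{equation}

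The main obstacle is the passage from $df_W = 0$ to the clean geometric identity $[P_W, P_{JW}] = 0$: the symmetry bookkeeping for the double commutator and the identification $J P_W J = -P_{JW}$ both need careful handling. Once that reduction is in hand, the four-block decomposition is standard linear algebra, and the trace computation on $I_W \oplus JI_W$ is immediate.
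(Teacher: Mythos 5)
Your derivation of the critical equation and the value of $f$ there is correct, and it takes a genuinely different route from the paper for both halves of the statement. The paper characterizes $\Crit(f)$ (Proposition~\ref{prop:critical_locus}) by writing $J$ in $2\times 2$ block form $\left(\begin{smallmatrix} A & C \\ -C^T & B\end{smallmatrix}\right)$ relative to $V = W \oplus W^\perp$, finding $(\grad f)_P = 2\left(\begin{smallmatrix} 0 & AC \\ -C^TA & 0\end{smallmatrix}\right)$, and then deducing the splitting $W = \ker A \oplus \Im A$ from $AC = 0$ together with the constraints $CC^T = \Id + A^2$; the critical value (Proposition~\ref{prop:critical_values}) is then obtained by invoking the energy decomposition of Proposition~\ref{prop:energy_decomposition}, which in turn rests on the K\"ahler-angle apparatus of Section~\ref{subsec:kahler_angles}. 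You instead reduce the critical equation to the clean condition $[P_W, P_{JW}] = 0$ via $J^2 = -\Id$ and $JP_WJ = -P_{JW}$ (equivalent to the paper's $AC = 0$, as one can verify blockwise), invoke the standard four-fold common-eigenspace decomposition of two commuting orthogonal projections, identify $W \cap JW^\perp$ with $W\cap W^\omega$ via $W^\omega = JW^\perp = (JW)^\perp$, and read off $\Tr([P_W,J]^2)$ directly from the block action. This is more elementary and self-contained for the statement at hand; what it does not buy you is the K\"ahler-angle decomposition itself, which the paper needs later for the flow-decoupling argument (Proposition~\ref{prop:decoupling}) and the type-preservation lemma. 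You are also right that the Hessian non-degeneracy part of the statement is logically independent of your computation; you correctly defer it to Theorem~\ref{thm:morse_bott} rather than re-proving it.
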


Consequently, the maxima are attained by Lagrangian, isotropic, and coisotropic subspaces (where the isotropic kernel is maximal), and perhaps unsurprisingly, the critical loci consist of the linear subspaces that split into an orthogonal sum of its isotropic kernel with its maximal complex subspace (Proposition \ref{prop:critical_locus}):

\[ W \in \Crit(f) \iff W = (W \cap W^\omega) \oplus (W \cap JW).\]

Analytically, $f$ mimics the norm-squared of a moment map in the sense of \cite{Kirwan} (for linear Morse functions on Grassmannians see \cite{Hangan, TakeuchiFunction, DKV, Guest, BanyagaHurtubise, Nicolaescu}). Although the real Grassmannian is not symplectic in general (for dimensional reasons) and thus admits no classical moment map, $f$ also induces a manifold decomposition invariant under its symmetries. Remarkably, while $f$ is defined via the complex structure $J$, its stable manifolds recover the orbit decomposition of the real Grassmannian $\Gr(k;V)$ under the \textbf{linear symplectomorphism group $\Sp(V)$}.

These orbits are labeled by the invariant indices (Section \ref{subsec:types}) $\vec{n} = (n_0, n_+, n_-)$, which we term the \textbf{type} of the subspace, in analogy with Harald Hess's classification of polarizations (Definition 3.1 of \cite{Hess}). These indices correspond to the dimensions of the isotropic ($n_0$) and reduced ($2n_+$) parts of $W$, and the reduced part of its complement ($2n_-$). We view these orbits as Grassmannian manifolds $\Gr(\vec{n}; V)$, generalizing the Lagrangian ($n_0=n$), isotropic ($n_+ = 0$), coisotropic ($n_- = 0$), and symplectic ($n_0 = 0$) Grassmannians.

\begin{theorem}[Stable manifolds, Theorem \ref{thm:stable_orbits}]
Let $\Gr_J(\vec{n}; V) \subseteq \Gr(k; V)$ be the submanifold of linear subspaces of $V$ of type $\vec{n}$ that split into an orthogonal sum of its isotropic kernel and its maximal complex subspace. Then the stable manifold of $\Gr_J(\vec{n}; V)$ coincides with the Grassmannian $\Gr(\vec{n}; V)$:
\[ W^s(\Gr_J(\vec{n}; V)) = \Gr(\vec{n}; V).\]
\end{theorem}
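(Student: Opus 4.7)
The plan is to realize the negative gradient field of $f$ as the infinitesimal action of an element of $\p \subset \sp(V)$ at each point, where $\p$ is the symmetric summand of the Cartan-type decomposition $\sp(V) = \u(V) \oplus \p$ (with $\u(V) = \sp(V) \cap \so(V)$ for the compatible metric $g(\cdot,\cdot) = \omega(\cdot, J\cdot)$). This is the linear-algebraic analogue of the Kempf--Ness equation for the norm-squared moment map: once in hand, it forces each $\Sp(V)$-orbit to be flow-invariant, and the identification of stable manifolds with orbits becomes a standard Morse--Bott argument.

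Concretely, I would begin by differentiating $f(W) = \tfrac{1}{2}\Tr([P_W, J]^2)$ along a variation $\dot P$. A short calculation gives
\[
df_W(\dot P) = \Tr\bigl([J, [P_W, J]]\,\dot P\bigr) = 2\,\Tr\bigl((P_W + JP_WJ)\,\dot P\bigr).
\]
The operator $P_W + JP_WJ$ is $g$-symmetric and anticommutes with $J$ (since $J(P_W + JP_WJ) + (P_W + JP_WJ)J = 0$), so it lies in $\p$. Translating through the induced metric on $T_W\Gr(k; V)$---under which a tangent vector $\xi\colon W \to W^\perp$ is represented by $P_{W^\perp}\xi P_W \in \End(V)$---this identifies $\grad f(W)$ with the fundamental vector field at $W$ of a positive multiple of $P_W + JP_WJ \in \p$. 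In particular $-\grad f|_W \in T_W(\Sp(V)\cdot W)$, so negative gradient trajectories are confined to $\Sp(V)$-orbits and each $\Gr(\vec n; V)$ is flow-invariant.

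The two inclusions then follow. For $\Gr(\vec n; V) \subseteq W^s(\Gr_J(\vec n; V))$: the flow from any $W_0 \in \Gr(\vec n; V)$ remains in the orbit and, by the Morse--Bott property (Theorem \ref{thm:morse_bott}) and compactness of $\Gr(k; V)$, converges to some $W_\infty \in \Crit(f) \cap \overline{\Gr(\vec n; V)}$. Monotonicity of $f$ along $-\grad f$ together with the integral formula $f|_{\Gr_J(\vec m; V)} = m_0$ rules out $W_\infty$ lying in a boundary component $\Gr_J(\vec m; V)$ with $m_0 > n_0$, so $W_\infty \in \Gr_J(\vec n; V)$. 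Conversely, if $W \in W^s(\Gr_J(\vec n; V))$ then its limit lies in $\Gr_J(\vec n; V) \subseteq \Gr(\vec n; V)$, and orbit-invariance pulls $W$ itself into $\Gr(\vec n; V)$.

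The conceptual heart of the argument, and the step I expect to require the most care, is the identification of $\grad f$ as a $\p$-direction---without it, $f$ would only be $\U(V)$-invariant and the flow could not traverse a full $\Sp(V)$-orbit, only its $\U(V)$-slices. A secondary technical point is ruling out trajectories in the open orbit $\Gr(\vec n; V)$ limiting to a boundary critical component with $m_0 > n_0$; this reduces, via monotonicity, to showing $f|_{\Gr(\vec n; V)}$ is strictly bounded above by each such $m_0$, which should follow from the explicit description of critical values together with the ordering of types under orbit degeneration.
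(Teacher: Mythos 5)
Your identification of the gradient as a fundamental vector field of an element of $\p$ is exactly the engine the paper uses (Lemma~\ref{lem:gradient_symmetry}): indeed $[J,[P,J]]=2(P+JPJ)$, so your generator $P_W+JP_WJ$ is $-\tfrac12 Z$ with $Z=-[J,[P,J]]\in\p$, and $-\grad f = X_Z$ as claimed. This establishes that each $\Sp(V)$-orbit is flow-invariant, which is the first half of the argument.

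However, the step you flag as ``secondary'' is in fact where the proof is won or lost, and the reduction you propose does not work. Orbit-invariance controls $P(t)$ for finite $t$, but the limit $P_\infty$ typically lands on the boundary of the orbit; by Proposition~\ref{prop:orbit_incidence} the invariant can only jump upward, so a priori $n_0(P_\infty)\ge n_0(P)$ with possible strict inequality. Monotonicity gives $f(P_\infty)\le f(P(0))$, so to exclude a landing site $\Gr_J(\vec m;V)$ with $m_0>n_0$ via a \emph{static} bound one would need $f|_{\Gr(\vec n;V)} < n_0+1$. This is false in general: by Lemma~\ref{lem:energy_bounds}, $f$ on the open orbit ranges up to (but strictly below) $\min(k,2n-k)$, which typically far exceeds $n_0+1$. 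For instance, on $\Gr((0,2,2);\R^8)$ the energy takes values throughout $[0,4)$, yet the closure contains the critical component $\Gr_J((2,1,3);\R^8)$ with value $2$; nothing in the static picture forbids a trajectory from limiting there. The same gap affects your converse inclusion: ``orbit-invariance pulls $W$ itself into $\Gr(\vec n;V)$'' is not a complete argument, since $W$'s orbit and $W_\infty$'s orbit need only be related by closure.

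What actually closes the gap in the paper is a \emph{dynamical} statement about the limit of the energy, not a bound on its initial value. The flow decouples along the K\"ahler-angle decomposition into $4$-dimensional $J$-invariant symplectic blocks (Proposition~\ref{prop:decoupling}), and in each such block a $2$-dimensional symplectic plane flows to a complex line, so its energy contribution decays to $0$ (Lemma~\ref{lem:limit_symplectic}). Hence $f(P(t))\to n_0(P)$ as $t\to\infty$, giving $n_0(P_\infty)=f(P_\infty)\le n_0(P)$, and combining with the orbit-closure inequality yields $n_0(P_\infty)=n_0(P)$ (Lemma~\ref{lem:type_preservation}). Without some version of this decay argument --- the decoupling into blocks together with the low-dimensional analysis --- neither inclusion goes through, so as written your proposal has a genuine gap at its last step.
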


The negative gradient flow allows us to construct deformation retractions, providing a unified topological description of these orbits.

\begin{corollary}[Deformation retraction, Corollary \ref{cor:def_retraction}]
The Grassmannian $\Gr(\vec{n}; V)$ deformation retracts onto the critical submanifold $\Gr_J(\vec{n}; V)$. Consequently, it is homotopy equivalent to the homogeneous space:
\[
   \Gr(\vec{n}; V) \simeq \Gr_J(\vec{n}; V) \cong \U(n) / (\O(n_0) \times \U(n_+) \times \U(n_-)).
\]
\end{corollary}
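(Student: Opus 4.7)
The plan is to produce the strong deformation retraction from the negative gradient flow of $f$ and then identify $\Gr_J(\vec{n}; V)$ as a homogeneous space for the natural $\U(n)$-action. Since $\Gr(k; V)$ is compact and $f$ is smooth, the negative gradient flow $\phi_t$ is complete. By the preceding stable-manifold theorem, every $W \in \Gr(\vec{n}; V)$ converges under $\phi_t$ to a point $r(W) \in \Gr_J(\vec{n}; V)$. I would define $H : \Gr(\vec{n}; V) \times [0, 1] \to \Gr(\vec{n}; V)$ by flowing in reparametrized time, setting $H(W, 1) := r(W)$. The image stays inside $\Gr(\vec{n}; V)$ by forward-invariance of stable manifolds, equals $W$ for $W \in \Gr_J(\vec{n}; V)$ because such $W$ are critical points, and is continuous for $t < 1$ because $\phi_t$ is. The only nontrivial point is continuity of $H$ at $t = 1$, i.e.\ continuity of $r$ itself.

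For the continuity of $r$, I would invoke the local Morse-Bott normal form: near each component of $\Gr_J(\vec{n}; V)$, the Hessian of $f$ splits the normal bundle into positive and negative subbundles, and in suitable coordinates $W^s(\Gr_J(\vec{n}; V))$ is locally the total space of the negative subbundle, with $r$ the bundle projection. Globalizing this along each trajectory, as in \cite{AustinBraam}, yields a continuous retraction onto $\Gr_J(\vec{n}; V)$ and completes the deformation-retraction half of the corollary.

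For the second half, I would identify $\Gr_J(\vec{n}; V)$ as a coset space. The unitary group $\U(n)$, viewed as the subgroup of $\Sp(V)$ commuting with $J$, preserves both the splitting condition $W = (W \cap W^\omega) \oplus (W \cap JW)$ and the invariant indices $\vec{n}$, hence acts on $\Gr_J(\vec{n}; V)$. I would check transitivity by producing, for any two subspaces $W_0 \oplus W_+$ and $W_0' \oplus W_+'$ of type $\vec{n}$, $J$-compatible Hermitian orthonormal bases of $W_0 \oplus JW_0$, $W_+$, and the Hermitian complement, arranged so that $W_0$ and $W_0'$ coincide with the real spans of the first $n_0$ basis vectors in the respective complex summand; matching the two bases defines the required element of $\U(n)$. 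Fixing a reference splitting, the stabilizer decomposes as $\O(n_0) \times \U(n_+) \times \U(n_-)$, the first factor arising as the subgroup of $\U(n_0)$ preserving a Lagrangian inside $W_0 \oplus JW_0$, and the remaining two factors acting on the complex summands. The main obstacle is the continuity of $r$ at infinity; all other steps are direct consequences of the stable-manifold theorem and standard facts about unitary actions on Grassmannians.
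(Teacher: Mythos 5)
Your proposal follows the same route as the paper: use the stable-manifold theorem to identify $\Gr(\vec{n};V)$ with $W^s(\Gr_J(\vec{n};V))$, retract along the negative gradient flow, and then present $\Gr_J(\vec{n};V)$ as a $\U(n)$-orbit with the stated stabilizer. The paper compresses these steps into citations (Theorems \ref{thm:stable_orbits}, \ref{thm:morse_bott}, \ref{thm:Jcompatible_darboux} and the stabilizer proposition), whereas you re-derive transitivity and the stabilizer by hand; the content is the same. One small correction: for the \emph{negative} gradient flow the stable manifold of a critical submanifold is locally modeled on the \emph{positive} eigenbundle of the Hessian (the directions along which $f$ increases are the ones flowing back to the critical set), not the negative subbundle as you wrote; with that sign fixed, your Morse--Bott normal-form argument for the continuity of the endpoint map $r$ is exactly what the paper relies on implicitly (the real analyticity of $f$ and the \L ojasiewicz inequality invoked in Section~\ref{subsec:gradient_flow} guarantee convergence, and the Morse--Bott nondegeneracy gives the local bundle structure making $r$ continuous).
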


This result unifies several topological observations scattered throughout the symplectic literature. Arnold \cite{Arnold} described the Lagrangian Grassmannian as $\U(n)/\O(n)$; Oh-Park \cite{OhPark} described the coisotropic case; and Lee-Leung \cite{LeeLeung} and Ajayi-Banyaga \cite{AjayiBanyaga} analyzed the symplectic case. Our corollary resolves the homotopy type for the ``mixed" cases (neither isotropic, coisotropic, nor symplectic) and constructs all deformation retracts simultaneously via a single Morse-theoretic framework.

We note that this retraction is not easily accessible via standard methods (e.g., \cite[Theorem IV. 3.5]{Loos}). The stabilizer subgroups $\Sp_W(V)$ are nonreductive, as they generally contain nilpotent subgroups (Proposition \ref{prop:stabilizer}), so they cannot be realized as the fixed points of an involution. This nonreductivity limits the use of the combinatorics of root systems and GIT quotients. While Takeuchi \cite{Takeuchi} used negative gradient flows to construct retractions in Hermitian symmetric spaces, that construction was restricted to open orbit strata (where the stabilizer is reductive).

\section*{Organization of paper}

The paper is organized into two main parts: the linear algebraic foundation and the Morse-theoretic analysis.

Section \ref{sec:symplectic_linear_algebra} establishes the classification of linear subspaces in a symplectic vector space. We define the orbit invariants $\vec{n} = (n_0, n_+, n_-)$ and prove the transitivity of the symplectic group action using the relative Darboux theorem. In Sections \ref{subsec:kahler_angles} and \ref{subsec:unitary}, we analyze the geometry of totally real and $J$-compatible subspaces, establishing the crucial link between the symplectic splitting and the K\"{a}hler angle.

Section \ref{sec:morse_bott_theory} develops the Morse-Bott theory on the real Grassmannian. We introduce the energy function $f$ in Section \ref{subsec:energy_function} and prove it is additive with respect to the K\"{a}hler angle decomposition. In Section \ref{subsec:morse_bott}, we compute the Hessian to verify the Morse-Bott non-degeneracy condition. Finally, Sections \ref{subsec:gradient_flow} through \ref{subsec:stable_manifolds} analyze the negative gradient flow. We verify that the flow preserves the isotropic kernel dimension and prove our main result: that the stable manifolds of the energy function coincide exactly with the symplectic orbits $\Gr(\vec{n}; V)$.

\section*{Acknowledgments}
This research was partly funded by the U.S.-Israel Binational Science Foundation (BSF) and by the Natural Sciences and Engineering Research Council of Canada (NSERC), and by the National Research Foundation of Korea(NRF) grant funded by the Korea government(MSIT)(No. RS-2024-00359647). The author would like to thank Yael Karshon and Jae-Hyouk Lee, for their encouragement and many insightful discussions throughout this project.

\section{Symplectic Linear Algebra} \label{sec:symplectic_linear_algebra}
\subsection{Types of linear subspaces}\label{subsec:types}
Let $(V, \omega)$ be a real, $2n$-dimensional symplectic vector space. For any linear subspace $W \subseteq V$, its symplectic complement is given by
\begin{equation} \label{eq:symplectic_complement}
W^\omega = \{ v \in V \mid \omega(v, w) = 0, \text{ for all } w \in W \}.\end{equation} Taking the symplectic complement is an involution, i.e., $(W^\omega)^\omega = W$. The subspaces $W$ and $W^\omega$ have complementary dimensions ($\dim_\R W + \dim_\R W^\omega = \dim_\R V = 2n$) but are not necessarily complementary subspaces in $V$.

The interaction of $W$ with $\omega$ is characterized by its \textbf{isotropic kernel} $W_0 := W \cap W^\omega$. The subspace $W_0$ is an isotropic subspace (contained in its own complement) of $V$. The symplectic complement of $W_0$ in $V$ is the sum $W + W^\omega$. The symplectic form $\omega$ descends to nondegenerate (symplectic) forms on the quotient spaces $W/W_0$ and $W^\omega/W_0$.

We adapt the notation of Harald Hess \cite[Definition 3.1]{Hess}, and classify subspaces by their \textbf{type}, a triple of nonnegative integers $\vec{n} = (n_0, n_+, n_-)$ defined by:
\begin{itemize}
    \item $n_0 = \dim W_0$
    \item $2n_+ = \dim (W/W_0)$
    \item $2n_- = \dim (W^\omega/W_0)$
\end{itemize}
These integers partition the total dimension such that $n = n_0 + n_+ + n_-$. This classification recovers the following conditions: $W$ is \textbf{Lagrangian} if $(n_0, 0, 0)$, \textbf{isotropic} if $(n_0, 0, n_-)$, \textbf{coisotropic} if $(n_0, n_+, 0)$, and \textbf{symplectic} if $(0, n_+, n_-)$. It also gives a systematic way to categorize linear subspaces that are neither coisotropic, isotropic, nor symplectic, according to their behavior with respect to the symplectic form.

\subsection{Transitivity of the linear symplectomorphism group}
Let $\Sp(V)$ be the group of linear symplectomorphisms of $V$. Let $\Gr(\vec{n}; V)$ denote the set of all linear subspaces of type $\vec{n}$.

The transitivity of the $\Sp(V)$ action on $\Gr(\vec{n}; V)$ follows from the linear relative Darboux theorem below.

\begin{definition}[Associated Splittings]\label{def:splittings}
A triple of linear subspaces $(W_+, W_-, W^0)$ of $V$ is called a \textbf{splitting associated to $W$} if:
\begin{enumerate}
    \item $W_+ \subseteq W$ is a symplectic subspace of $V$ complementary to $W_0$ in $W$.
    \item $W_- \subseteq W^\omega$ is a symplectic subspace of $V$ complementary to $W_0$ in $W^\omega$.
    \item $W^0 \subseteq V$ is a Lagrangian subspace complementary to $W_0$ in $(W_+ \oplus W_-)^\omega$.
\end{enumerate}
\end{definition}

\begin{theorem}[Linear relative Darboux theorem {\normalfont(\cite[Section 1.2]{ArnoldGivental})}] \label{thm:rel_darboux}
Let $W \subset V$ be a linear subspace of type $\vec{n}$. Then there exist splittings associated to $W$. For every splitting $(W_+, W_-, W^0)$ associated to $W$, there exists a Darboux basis of $V$:
\[
\{e_1^0, \dots, e_{n_0}^0, e_1^+, \dots, e_{n_+}^+, e_1^-, \dots, e_{n_-}^-, f_1^0, \dots, f_{n_0}^0, f_1^+, \dots, f_{n_+}^+, f_1^-, \dots, f_{n_-}^-\}
\]
adapted to the splitting in the sense that:
\begin{enumerate}
    \item $W_0 = \mathrm{span}\{e_j^0\}_{j=1}^{n_0}$ and $W^0 = \mathrm{span}\{f_j^0\}_{j=1}^{n_0}$.
    \item $W_+ = \mathrm{span}\{e_k^+, f_k^+\}_{k=1}^{n_+}$ and $W_- = \mathrm{span}\{e_l^-, f_l^-\}_{l=1}^{n_-}$.
    \item $W = W_0 \oplus W_+$ and $W^\omega = W_0 \oplus W_-$.
\end{enumerate}
\end{theorem}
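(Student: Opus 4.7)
The proof splits naturally into two stages: producing an associated splitting, and building a Darboux basis from a given splitting.

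For the first stage, I would construct $W_+$, $W_-$, and $W^0$ in order. Since $W_0 \subseteq W^\omega$, the form $\omega$ vanishes on $W_0$ and descends to a nondegenerate form on $W/W_0$; thus any vector space complement $W_+$ to $W_0$ in $W$ is automatically symplectic, because the composition $W_+ \hookrightarrow W \twoheadrightarrow W/W_0$ is an isomorphism of $\omega$-forms. Choose $W_-$ as such a complement inside $W^\omega$ by the same argument. Next I would check that $W_+ \oplus W_-$ is symplectic: $W_+ \subseteq W$ and $W_- \subseteq W^\omega$ force $\omega(W_+, W_-) = 0$, so $\omega$ on the sum is the direct sum of two nondegenerate forms. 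Its symplectic complement $(W_+ \oplus W_-)^\omega$ has dimension $2n_0$, and $W_0$ sits inside it (because $W_0 \subseteq W^\omega \subseteq W_+^\omega$ and $W_0 \subseteq W \subseteq W_-^\omega$). Being isotropic of half the ambient dimension, $W_0$ is Lagrangian in this $2n_0$-dimensional symplectic block, and I can take $W^0$ to be any Lagrangian complement.

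For the second stage, the decomposition
\[
V = W_+ \oplus W_- \oplus (W_0 \oplus W^0)
\]
is $\omega$-orthogonal block-by-block: $W_+ \perp_\omega W_-$ as above, and $W_0 \oplus W^0 = (W_+ \oplus W_-)^\omega$ is $\omega$-orthogonal to both of the other blocks by construction. Hence $\omega$ restricts to nondegenerate forms on each block and is their $\omega$-orthogonal sum. On $W_+$ and $W_-$ I would pick Darboux bases $\{e_k^+, f_k^+\}$ and $\{e_l^-, f_l^-\}$ directly. On the block $W_0 \oplus W^0$, since $W_0$ and $W^0$ are a pair of Lagrangian complements in a $2n_0$-dimensional symplectic space, $\omega$ restricts to a perfect pairing $W_0 \times W^0 \to \R$; so after choosing an arbitrary basis $\{e_j^0\}$ of $W_0$, I define $\{f_j^0\}$ to be the dual basis in $W^0$ under this pairing. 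Concatenating the three bases yields a Darboux basis of $V$, and all three adaptation conditions hold tautologically from the construction.

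The only step that is not pure bookkeeping is the existence of a Lagrangian complement $W^0$. I would dispatch this by the standard device: equip $(W_+ \oplus W_-)^\omega$ with a compatible inner product (equivalently a compatible complex structure $J'$) and set $W^0 := J' W_0$, which is automatically a Lagrangian complement of $W_0$. This is the only place any auxiliary structure is invoked, and it is precisely where the freedom in the splitting enters — different choices of compatible inner product give different admissible $W^0$, consistent with the theorem's quantifier structure.
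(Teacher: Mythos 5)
Your proof is correct, and it is the standard construction one finds in references on linear symplectic algebra. Note that the paper itself does not prove Theorem~\ref{thm:rel_darboux}; it is cited to Arnold--Givental, so there is no in-paper argument to compare against. Your treatment is complete: the reduction to $W/W_0$ to obtain symplectic complements $W_\pm$, the verification that $W_0$ is Lagrangian in $(W_+\oplus W_-)^\omega$, the dual-basis construction on the Lagrangian pair $(W_0, W^0)$, and the use of a compatible complex structure to produce the Lagrangian complement are all exactly the right steps, and the adaptation conditions follow by construction.
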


As orbits of a connected Lie group, $\Gr(\vec{n}; V)$ is connected for every $\vec{n}$. 

Let $\Gr(k; V)$ denote the ordinary Grassmannian of real $k$-dimensional linear subspaces of $V$. The ordinary Grassmannian $\Gr(k; V)$ decomposes into $\Sp(V)$-orbits as
\[ \Gr(k; V) = \bigsqcup_{\vec{n}: k = n_0 + 2n_+} \Gr(\vec{n}; V).\]

\begin{proposition}[Orbit incidence] \label{prop:orbit_incidence}
$\Gr(\vec{n}'; V)$ is contained in the closure of $\Gr(\vec{n}; V)$ in $\Gr(k; V)$ if and only if $n_0' \ge n_0$.
\end{proposition}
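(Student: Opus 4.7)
The plan is to prove the two directions separately: the forward implication by upper semicontinuity of the isotropic kernel dimension, and the reverse by an explicit one-parameter deformation built from a Darboux basis.

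For the ``only if'' direction, I would view the assignment $W \mapsto \omega|_W$ as a continuous map from $\Gr(k;V)$ to the space of skew-symmetric bilinear forms on a $k$-dimensional vector space (using any continuous local frame of the tautological bundle). The invariant $n_0 = \dim(W \cap W^\omega)$ equals the corank of this form, and since matrix rank is lower semicontinuous in its entries, corank is upper semicontinuous on $\Gr(k;V)$. Equivalently, the locus $\{W : \rank(\omega|_W) \le 2n_+\}$ is closed, contains $\Gr(\vec{n};V)$, hence contains its closure, so $n_0' \geq n_0$ for any $W'$ in that closure.

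For the ``if'' direction, first note that $k = n_0 + 2n_+ = n_0' + 2n_+'$ forces $n_0' - n_0$ to be even, say $2e$, and then $n_+ = n_+' + e$ and $n_- = n_-' + e$ follow from $n = n_0 + n_+ + n_-$. Fix any $W' \in \Gr(\vec{n}'; V)$ and apply Theorem \ref{thm:rel_darboux} to obtain a Darboux basis adapted to $W'$. The idea is then to choose a one-parameter family $W_t$ that perturbs $W'$ inside $\Gr(k;V)$ in directions creating $e$ new symplectic pairs: pair up the last $2e$ basis vectors of $W_0'$ as $(e_{n_0'-2e+2l-1}^0, e_{n_0'-2e+2l}^0)$ for $l=1,\ldots,e$, and replace the first vector of each pair by $e_{n_0'-2e+2l-1}^0 + t\, f_{n_0'-2e+2l}^0$, where the perturbing $f$'s are taken from the Lagrangian complement $W^0$ provided by Theorem \ref{thm:rel_darboux}. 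A short Darboux-basis computation then shows that each perturbed pair becomes symplectic with pairing $-t$, that distinct perturbed pairs and the unperturbed pairs $(e_k^+, f_k^+)$ are mutually $\omega$-orthogonal, and that the remaining $n_0'-2e$ vectors $e_j^0$ stay in the isotropic kernel of $W_t$. Hence $W_t$ has type $(n_0'-2e,\, n_+'+e,\, n_-'+e) = \vec{n}$ for $t \neq 0$ while $W_t \to W'$ as $t \to 0$.

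I do not expect a serious obstacle: the only subtlety is choosing a perturbation that actually raises $\rank(\omega|_W)$. The naive choice $e_j^0 \mapsto e_j^0 + t f_j^0$ leaves the perturbed vector isotropic with no conjugate partner inside $W_t$, which is why the construction above instead pairs each new $f$ with a \emph{different} isotropic basis vector of $W'$ than its own Darboux conjugate, so that two formerly isotropic directions become symplectically paired in the limit.
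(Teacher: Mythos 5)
Your proof is correct, and it is strictly more complete than the paper's. For the ``only if'' direction you use the same semicontinuity argument the paper gives (lower semicontinuity of $\rank(\omega|_W)$, equivalently upper semicontinuity of its corank $n_0$), so that half is the same. But the paper's one-sentence proof addresses \emph{only} this direction and says nothing about the converse, which is actually the more substantial half of an ``if and only if.'' Your explicit one-parameter deformation supplies the missing argument, and it checks out: with $e := (n_0'-n_0)/2$ and the Darboux basis of Theorem~\ref{thm:rel_darboux} adapted to $W'$, replacing $e^0_{n_0'-2e+2l-1}$ by $e^0_{n_0'-2e+2l-1} + t\,f^0_{n_0'-2e+2l}$ for $l=1,\dots,e$ gives $\omega\bigl(e^0_{n_0'-2e+2l-1}+t f^0_{n_0'-2e+2l},\,e^0_{n_0'-2e+2l}\bigr) = -t$, so each designated pair becomes symplectic for $t\ne 0$; all cross-pairings among the perturbed vectors, the remaining $e^0_j$, and the $e^+_k, f^+_k$ vanish because distinct Darboux indices are $\omega$-orthogonal; and $\dim W_t = (n_0'-2e)+2e+2n_+' = k$. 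Hence $\rank(\omega|_{W_t}) = 2n_+' + 2e = 2n_+$, $n_0(W_t) = n_0$, and $W_t \to W'$ as $t\to 0$. Your closing observation --- that the naive perturbation $e^0_j \mapsto e^0_j + t f^0_j$ fails because the perturbed vector remains $\omega$-orthogonal to all of $W_t$ (its Darboux conjugate not being present), whereas cross-pairing makes two formerly isotropic directions symplectically dual inside $W_t$ --- correctly identifies the genuine subtlety of the construction.
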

\begin{proof}
We apply the lower semicontinuity of rank on $\omega|_W$. The rank of the restriction can only decrease in the limit, implying the dimension of the isotropic kernel $n_0$ can only increase.
\end{proof}

\subsection{Stabilizer Subgroups and Splittings} \label{subsec:stabilizers_splittings}

Since $g \in \Sp(V)$ is a linear symplectomorphism, it restricts to a linear isomorphism $g|_{W_0}$ on $W_0$ and descends to linear symplectomorphisms $[g]_{W/W_0}$ (resp. $[g]_{W^\omega/W_0}$) on the quotient spaces $W/W_0$ (resp. $W^\omega/W_0$).

Let $\Sp_W(V)\le \Sp(V)$ denote the subgroup of linear symplectomorphisms that stabilize $W$ ($gW \subseteq W$) and consider the restriction-reduction homomorphism:
\[
\Phi: \Sp_W(V) \longrightarrow \GL(W_0) \times \Sp(W/W_0) \times \Sp(W^\omega/W_0)
\]
defined by $\Phi(g) := (g|_{W_0}, [g]_{W/W_0}, [g]_{W^\omega/W_0})$.

Let $H(W) := \ker(\Phi)$. This subgroup contains the information about the shearing of $W_0$ in $W + W^\omega$, as it is a central extension of the abelian group $\Hom_\R(W_0, W+W^\omega)$. It satisfies
\[ H(W) = H(W^\omega) = H(W_0) = H(W + W^\omega).\]

\begin{proposition}
The group $H(W)$ is a connected, simply connected nilpotent Lie group. The exponential map $\exp: \mathfrak{h}(W) \to H(W)$ is a global diffeomorphism, so that
\[
H(W) \cong \R^d, \quad \text{where } d = 2n_0(n_+ + n_-) + \frac{1}{2}n_0(n_0+1).
\]
\end{proposition}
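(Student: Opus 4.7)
The plan is to exhibit $H(W)$ explicitly using an adapted Darboux basis (Theorem \ref{thm:rel_darboux}) and a self-dual three-step filtration of $V$. First I would fix a splitting $(W_+, W_-, W^0)$ associated to $W$ and consider the flag
\[
0 \subset W_0 \subset W + W^\omega \subset V,
\]
which is self-dual under $\omega$ since $(W + W^\omega)^\omega = W_0$. For $g \in H(W)$, the defining conditions force $(g-I)(W_0) = 0$ and $(g-I)(W + W^\omega) \subseteq W_0$; the identity
\[
\omega(g(v) - v, w_0) = \omega(g(v), g(w_0)) - \omega(v, w_0) = 0, \qquad v \in V,\ w_0 \in W_0,
\]
then yields $(g-I)(V) \subseteq W_0^\omega = W + W^\omega$. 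Hence $(g-I)^3 = 0$ and $g$ is unipotent. The same argument at the Lie algebra level gives $X^3 = 0$ for every $X \in \mathfrak{h}(W) = \ker d\Phi$, so $\mathfrak{h}(W)$ is nilpotent by Engel's theorem.

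Next I would compute $\dim \mathfrak{h}(W)$ by decomposing $V = W_0 \oplus W_+ \oplus W_- \oplus W^0$ and counting the independent components of $X$. The blocks $X|_{W_\pm} : W_\pm \to W_0$ are unconstrained and contribute $2 n_0 n_+$ and $2 n_0 n_-$ dimensions. The block $X|_{W^0}$ has its $W^0$-component forced to zero by the nondegenerate pairing of $W_0$ with $W^0$; its $W_\pm$-components are determined by $X|_{W_\pm}$ through the symplectic relations $\omega(X w^0, w_\pm) + \omega(w^0, X w_\pm) = 0$; and its $W_0$-component $\alpha : W^0 \to W_0$ corresponds, under the duality $W_0 \cong (W^0)^*$ provided by $\omega$, to the bilinear form $B(u,v) = \omega(\alpha(u), v)$ on $W^0$, which the relation $\omega(\alpha u, v) + \omega(u, \alpha v) = 0$ forces to be \emph{symmetric}, yielding $\tfrac{1}{2} n_0(n_0 + 1)$ parameters. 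Summing gives $d = 2 n_0 (n_+ + n_-) + \tfrac{1}{2} n_0 (n_0 + 1)$.

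Finally, because $X^3 = 0$ and $(g-I)^3 = 0$, the exponential and logarithm truncate to the polynomial maps $\exp(X) = I + X + \tfrac{1}{2} X^2$ and $\log(g) = (g - I) - \tfrac{1}{2} (g-I)^2$, which a direct nilpotent computation shows are mutually inverse; the image of $\exp$ lies in $H(W)$ since each defining condition propagates through sums and products, so $\exp : \mathfrak{h}(W) \to H(W)$ will be a polynomial diffeomorphism. Combined with the dimension count, this gives $H(W) \cong \R^d$, and connectedness, simple connectedness, and nilpotency all follow. The main obstacle is the bookkeeping in the second step: isolating the genuinely independent parameters in $X|_{W^0}$ and confirming that its $W_0$-component corresponds to a \emph{symmetric} rather than arbitrary bilinear form on $W^0$, which is what produces the $\tfrac{1}{2} n_0(n_0 + 1)$ term instead of $n_0^2$.
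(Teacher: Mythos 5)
Your proof is correct, and it takes a genuinely different route from the paper. The paper works entirely in an explicit adapted Darboux basis: it reorders the basis as $\{e^0,e^+,f^+,e^-,f^-,f^0\}$, writes down the full $6\times 6$ block-unipotent matrix form of a generic $g\in H(W)$, reads the symplectic constraint on the corner block $Y$ directly, and then invokes the standard fact that a group of unipotent matrices is connected, simply connected, nilpotent, and has a globally polynomial exponential. You instead argue intrinsically: the self-dual flag $0\subset W_0 \subset W+W^\omega \subset V$ gives $(g-\Id)^3=0$ via the symplectic identity $\omega(g(v)-v,w_0)=0$, Engel's theorem replaces the matrix picture at the Lie-algebra level, and the dimension count proceeds by analyzing the blocks of $X\in\mathfrak{h}(W)$ against the decomposition $V=W_0\oplus W_+\oplus W_-\oplus W^0$, using the nondegenerate $\omega$-pairing of $W_0$ with $W^0$ to show the $W_\pm$-components of $X|_{W^0}$ are determined by $X|_{W_\pm}$ and the $W_0$-component corresponds to a symmetric bilinear form on $W^0$. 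Both give $d=2n_0(n_++n_-)+\tfrac12 n_0(n_0+1)$ and conclude $H(W)\cong\R^d$ via truncated $\exp$/$\log$. What your approach buys is coordinate-independence — the constraints fall out of duality rather than matrix inspection, which makes it transparent why the $W^0\to W_\pm$ blocks are redundant and why the $W^0\to W_0$ block is symmetric. What the paper's approach buys is immediacy: the explicit matrix is reused in the proof of Proposition~\ref{prop:stabilizer} and in the unitary-stabilizer computation, so the coordinate description earns its keep elsewhere in the text.
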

\begin{proof}
We take a Darboux basis from Theorem \ref{thm:rel_darboux}, and reorder the blocks as
\[ \{ e^0, e^+, f^+, e^-, f^-, f^0\}.\]
In this ordered basis, an element $g \in H(W)$ takes the block form
\[
g = \begin{pmatrix}
\Id_{n_0} & E_+ & F_+ & E_- & F_- & Y \\
0 & \Id_{n_+} & 0 & 0 & 0 & F_+^T \\
0 & 0 & \Id_{n_+} & 0 & 0 & -E_+^T \\
0 & 0 & 0 & \Id_{n_-} & 0 & -F_-^T \\
0 & 0 & 0 & 0 & \Id_{n_-} & E_-^T \\
0 & 0 & 0 & 0 & 0 & \Id_{n_0}
\end{pmatrix}
\]
where the symplectic condition imposes that the block $Y$ satisfies $Y - (E_+ F_+^T + E_- F_-^T) \in \Mat_{n_0 \times n_0}(\R)^T$ (i.e., the expression is symmetric).

Since the diagonal blocks are identity matrices and the matrix is block upper-triangular, $H(W)$ is a group of unipotent matrices. Every unipotent matrix group over $\R$ is connected, simply connected, and nilpotent. Furthermore, for unipotent groups, the exponential map $\exp: \mathfrak{h}(W) \to H(W)$ is a global polynomial diffeomorphism from the Lie algebra to the Lie group. Thus, $H(W)$ is diffeomorphic to its Lie algebra, which is a vector space $\R^d$.

We compute the dimension $d$ by counting the independent parameters in the matrix representation:
\begin{itemize}
    \item The blocks $E_+, F_+$ contribute $2 \times (n_0 \times n_+) = 2n_0 n_+$ parameters.
    \item The blocks $E_-, F_-$ contribute $2 \times (n_0 \times n_-) = 2n_0 n_-$ parameters.
    \item The block $Y$ is an $n_0 \times n_0$ matrix. The symplectic constraint fixes the skew-symmetric part of $Y$ in terms of the other blocks, leaving the symmetric part free. The dimension of the space of symmetric $n_0 \times n_0$ matrices is $\frac{1}{2}n_0(n_0+1)$ .
\end{itemize}
Summing these contributions yields the total dimension:
\[
d = 2n_0(n_+ + n_-) + \frac{1}{2}n_0(n_0+1).
\]
\end{proof}

\begin{remark}[Relation to the Heisenberg Group]
In the specific case where $W$ is a Lagrangian subspace (i.e., $n_+ = n_- = 0$), the subgroup $H(W)$ is isomorphic to the abelian group of symmetric matrices $\Mat_{n_0 \times n_0}(\R)^T \cong \R^{n_0(n_0+1)/2}$.

However, for general isotropic subspaces ($n_+ > 0$ or $n_- > 0$), the group $H(W)$ is non-abelian and 2-step nilpotent. It is isomorphic to a \textbf{generalized Heisenberg group} \cite{Ziegler}. Specifically, the center of $H(W)$ corresponds to the symmetric forms on $W_0$, while the quotient by the center acts by shearing the symplectic components $W_\pm$ along $W_0$.
\end{remark}

\begin{proposition}[Structure of the Stabilizer] \label{prop:stabilizer}
Let $W$ be a subspace of type $\vec{n}$. For every splitting $\sigma = (W_+, W_-, W^0)$ associated to $W$, there exists a closed subgroup $G_\sigma \le \Sp_W(V)$ defined by the preservation of the splitting components:
\[
G_\sigma = \{ g \in \Sp_W(V) \mid gW_+ = W_+, gW_- = W_- \}.
\]
The homomorphism $\Phi$ restricts to an isomorphism on $G_\sigma$, yielding the semidirect product decomposition:
\[
\Sp_W(V) \cong G_\sigma \ltimes H(W)
\]
where $G_\sigma \cong \GL(W_0) \times \Sp(W_+) \times \Sp(W_-)$. Moreover, the set of all splittings associated to $W$ is a principal homogeneous space for $H(W)$.
\end{proposition}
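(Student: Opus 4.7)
The plan is to first show that $\Phi$ restricts to an isomorphism $G_\sigma \xrightarrow{\sim} \GL(W_0) \times \Sp(W_+) \times \Sp(W_-)$, then deduce the semidirect product decomposition by using $G_\sigma$ as a splitting of the short exact sequence $1 \to H(W) \to \Sp_W(V) \xrightarrow{\Phi} \GL(W_0) \times \Sp(W/W_0) \times \Sp(W^\omega/W_0) \to 1$, and finally derive the torsor property from the resulting stabilizer computation.

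The main tool is the symplectic direct sum $V = W_+ \oplus W_- \oplus (W_0 \oplus W^0)$ afforded by the splitting, in which $W_+$, $W_-$, and $W_0 \oplus W^0$ are mutually symplectically orthogonal symplectic subspaces, and $\omega$ restricts to a perfect pairing of $W_0$ with $W^0$. Given a triple $(A, B_+, B_-) \in \GL(W_0) \times \Sp(W_+) \times \Sp(W_-)$, I would define a section $s$ of $\Phi$ by letting $s(A, B_+, B_-)$ act as $B_\pm$ on $W_\pm$, as $A$ on $W_0$, and as the symplectic dual of $A$ on $W^0$ (uniquely determined by $\omega(Aw_0, s(w^0)) = \omega(w_0, w^0)$). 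Checking that $s$ is a group homomorphism into $G_\sigma$ with $\Phi \circ s = \Id$ is routine. For injectivity of $\Phi|_{G_\sigma}$, an element $g$ in the kernel must act as identity on $W_0$ and on the quotients $W/W_0$ and $W^\omega/W_0$; combined with $gW_+ = W_+$ and $gW_- = W_-$, this forces $g$ to fix $W_+$ and $W_-$ pointwise, and then symplectic duality on the pair $(W_0, W^0)$ forces $g|_{W^0} = \Id$ as well.

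With the section in hand, the semidirect product structure follows at once: $H(W) = \ker \Phi$ is normal, $G_\sigma \cap H(W) = \{e\}$ by the injectivity just shown, and surjectivity of $\Phi$ on all of $\Sp_W(V)$ is inherited from its surjectivity on $G_\sigma$. This yields $\Sp_W(V) \cong G_\sigma \ltimes H(W)$.

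For the torsor property, $H(W)$ acts on the set of splittings by $h \cdot (W_+, W_-, W^0) := (hW_+, hW_-, hW^0)$, well-defined because $h$ is symplectic and fixes $W_0$ pointwise. Freeness is immediate from $G_\sigma \cap H(W) = \{e\}$, since the stabilizer of a splitting in $\Sp_W(V)$ is precisely $G_\sigma$. The main obstacle will be transitivity. I would handle it by a dimension count: the set of splittings is an iterated affine bundle---complements of $W_0$ in $W$, then in $W^\omega$, then Lagrangian complements of $W_0$ in $(W_+ \oplus W_-)^\omega$---of total dimension $2n_0 n_+ + 2n_0 n_- + \tfrac{1}{2}n_0(n_0+1) = d = \dim H(W)$. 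Since this parameter space is an iterated affine bundle and hence contractible (in particular connected), and $H(W)$ acts smoothly and freely with source and target of the same dimension, every orbit is open; connectedness of the total space of splittings then forces the action to be transitive. If this indirect argument is unsatisfying, one can alternatively extract the required $h \in H(W)$ directly from the change-of-basis matrix between two Darboux bases adapted to the two splittings, using the block form given in the previous proposition.
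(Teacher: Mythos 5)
Your approach differs from the paper's in two respects, both worth noting. For the semidirect product, the paper exhibits $G_\sigma$ as a set of block-diagonal matrices in an adapted Darboux basis (from Theorem~\ref{thm:rel_darboux}) and reads off the isomorphism; you instead build an intrinsic section $s$ of $\Phi$ using the symplectic direct sum $V = W_+ \oplus W_- \oplus (W_0 \oplus W^0)$ and the symplectic duality between $W_0$ and $W^0$. This is a cleaner, coordinate-free formulation of the same mechanism (your $s(A,\cdot,\cdot)|_{W^0}$ is exactly the paper's $(X^{-1})^T$ block). For transitivity of the $H(W)$-action on splittings, the paper passes through the change-of-basis matrix between two adapted Darboux bases and factors out the $G_\sigma$ part, whereas you argue by dimension count and connectedness: the set of splittings is an iterated affine bundle of total dimension $d = \dim H(W)$, hence contractible, and a free smooth action of a Lie group of the same dimension has open orbits, so connectedness forces a single orbit. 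Both are correct; your argument is slicker but indirect, while the paper's is constructive (and you rightly flag the constructive alternative as a fallback). Your dimension count is accurate, and the fact that \emph{every} complement of $W_0$ in $W$ is automatically symplectic (because $\omega|_W$ factors through $W/W_0$) is the key point that makes the affine-bundle description work.

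One genuine soft spot, which you share with the paper's own proof: the displayed definition $G_\sigma = \{ g \in \Sp_W(V) \mid gW_+ = W_+,\ gW_- = W_- \}$ omits the condition $gW^0 = W^0$. With the definition as printed, $G_\sigma$ contains the shear $e^0_j \mapsto e^0_j$, $f^0_j \mapsto f^0_j + \sum_k S_{kj} e^0_k$ (with $S$ symmetric), which lies in $H(W) = \ker\Phi$, so $\Phi|_{G_\sigma}$ is not injective and $G_\sigma \cap H(W) \neq \{e\}$. Concretely, your step ``symplectic duality on the pair $(W_0, W^0)$ forces $g|_{W^0} = \Id$'' only gives $gw^0 - w^0 \in W_0$ (since $W_0$ is Lagrangian in $W_0 \oplus W^0$), not $gw^0 = w^0$; one needs $gW^0 = W^0$ to conclude $gw^0 - w^0 \in W_0 \cap W^0 = \{0\}$. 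The surrounding prose (``preservation of the splitting components'') and the paper's block form both indicate the intended definition includes $gW^0 = W^0$; once that is added, your injectivity argument, your identification of the stabilizer of a splitting with $G_\sigma$, and hence your freeness claim all go through.
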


\begin{proof}
Let $\sigma = (W_+, W_-, W^0)$ be a splitting associated to $W$. By Theorem \ref{thm:rel_darboux}, there exists a Darboux basis $\mathcal{B}$, reordered as $\{e^0, e^+, f^+, e^-, f^-, f^0\}$, adapted to this splitting. We define $G_\sigma$ as the subgroup of $\Sp_W(V)$ preserving the subspaces $W_+$ and $W_-$. In the basis $\mathcal{B}$, elements of $G_\sigma$ correspond to block-diagonal matrices of the form:
\[
\begin{pmatrix}
X & 0 & 0 & 0 & 0 & 0 \\
0 & A_+ & B_+ & 0 & 0 & 0 \\
0 & C_+ & D_+ & 0 & 0 & 0 \\
0 & 0 & 0 & A_- & B_- & 0 \\
0 & 0 & 0 & C_- & D_- & 0 \\
0 & 0 & 0 & 0 & 0 & (X^{-1})^T
\end{pmatrix}
\]
where $X \in \GL(n_0)$, $\left(\begin{smallmatrix} A_+ & B_+ \\ C_+ & D_+ \end{smallmatrix}\right) \in \Sp(2n_+)$, and $\left(\begin{smallmatrix} A_- & B_- \\ C_- & D_- \end{smallmatrix}\right) \in \Sp(2n_-)$.

The reduction homomorphism $\Phi: \Sp_W(V) \to \GL(W_0) \times \Sp(W/W_0) \times \Sp(W^\omega/W_0)$ restricts to $G_\sigma$. By identifying $\Sp(W/W_0)$ with $\Sp(W_+)$ and $\Sp(W^\omega/W_0)$ with $\Sp(W_-)$ via the splitting, the matrix form above shows that $\Phi|_{G_\sigma}$ is an isomorphism. Since $H(W) = \ker(\Phi)$, the sequence splits, establishing the semidirect product $\Sp_W(V) \cong G_\sigma \ltimes H(W)$.

Finally, we consider the action of $H(W)$ on the set of splittings. Let $\Sigma_W$ be the set of splittings associated to $W$. Given any two splittings $\sigma, \sigma' \in \Sigma_W$, Theorem \ref{thm:rel_darboux} provides adapted Darboux bases for each. The linear map transforming the basis of $\sigma$ to the basis of $\sigma'$ is a symplectic map stabilizing $W$. By factoring out the $G_\sigma$-component (which fixes the splitting $\sigma$), the remaining component lies in $H(W)$. Explicitly, $H(W)$ acts simply transitively on the set of splittings (specifically, on the choices of complements $W_+, W_-, W^0$), making $\Sigma_W$ a torsor for $H(W)$.
\end{proof}

From the definition via the restriction-reduction homomorphism $\Phi$ we can see that $H(W)$ is intrinsic and unique, whereas $G_\sigma$ depends on the geometric choice of splitting.

\begin{remark}[Levi Decomposition]
When $\Sp_W(V)$ and $\Sp(V)$ are viewed as the real points of an algebraic group the decomposition $\Sp_W(V) \cong G_\sigma \ltimes H(W)$ corresponds to the \textbf{Levi decomposition} of a parabolic subgroup $\mathcal{P} = L \ltimes R_u(\mathcal{P})$ (cf. Section 6.4 of \cite{OnishchikVinberg}, Proposition 7.83 of \cite{Knapp2002}). The subgroup $H(W)$ is the unipotent radical $R_u(\mathcal{P})$, and $G_\sigma$ is a Levi factor $L$. The fact that the set of splittings forms a principal homogeneous space for $H(W)$ reflects the conjugacy of Levi factors (the Malcev-Iwasawa theorem).
\end{remark}

\subsection{Totally real symplectic subspaces and K\"{a}hler angles}\label{subsec:kahler_angles}

Let $J$ be an $\omega$-compatible linear complex structure on $V$. The associated inner product is $g(\cdot, \cdot) = \omega(\cdot, J\cdot)$ and the orthogonal complement of a linear subspace $W$ is denoted $W^\perp$. We will refer to a linear subspace $W$ such that $JW = W$ as a \textbf{$J$-invariant} or \textbf{complex} subspace. We will refer to a linear subspace $W$ such that $W \cap JW = \{0\}$ as a \textbf{totally real} subspace.

Using $J$ we can fix a splitting associated to $W$ by letting $W_+:= W_0^\perp \cap W$, $W_-:= W_0^\perp \cap W^\omega$, $W^0 := JW_0$. 
In this section, we will investigate the failure of the splitting
\[ V = (W_0 \oplus JW_0)\oplus W_+ \oplus W_-\]
to be an orthogonal decomposition.

Consider the maximal complex subspaces of $W$ and $W^\omega$, given by $W_+^J := W \cap JW$, and $W_-^J := W^\omega \cap JW^\omega$.
Then for $w_0 \in W_0$ and $w_+ = Jw_+' \in W_+^J$, $g(w_0, w_+) = -\omega(w_0, w_+') = 0$, so $W_\pm^J \subseteq W_\pm$.

Now let $W_\pm^\theta:= (W_\pm^J)^\perp \cap W_\pm$. By construction $W_\pm^\theta$ are transverse to $W_0$, so they are symplectic subspaces of $V$. Then we have the orthogonal decompositions
\begin{align*} W &= W_0 \oplus W_+^J \oplus W_+^\theta, \\ W^\omega &= W_0 \oplus W_-^J \oplus W_-^\theta \end{align*}
into three summands. We will refer to each as the \textbf{isotropic kernel}, the \textbf{$J$-invariant symplectic part}, and the \textbf{totally real symplectic part}. Denote their dimensions $2n_{\pm}^J := \dim_\R W_\pm^J$, and $2n_{\pm}^\theta := \dim_\R W_\pm^\theta$.

\begin{proposition} \label{prop:kahler_splittings}
There exists an orthogonal decomposition of $V$ into $J$-invariant symplectic subspaces
\[ V = V^0 \oplus V^J \oplus V^\theta\]
such that
\begin{enumerate}
    \item $W_0 = W \cap V^0 = W^\omega \cap V^0$ and $W_0$ is a Lagrangian subspace of $V^0$.
    \item $W_+^J = W \cap V^J$ and $W_-^J = W^\omega\cap V^J$. Moreover, they are $J$-invariant symplectic summands of an orthogonal decomposition $V^J = W_+^J \oplus W_-^J$.
    \item $W_+^\theta = W \cap V^\theta$ and $W_-^\theta = W^\omega \cap V^\theta$.  Moreover, they are totally real symplectic summands of a direct sum decomposition $V^\theta = W_+^\theta \oplus W_-^\theta$.
    \item There is an orthogonal decomposition $V^0 = W_0 \oplus JW_0$, and direct sum decompositions $V^\theta = W_\pm^\theta \oplus JW_\pm^\theta$.
\end{enumerate}

\end{proposition}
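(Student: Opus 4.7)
The plan is to build the three summands directly: set $V^0 := W_0 + JW_0$, $V^J := W_+^J + W_-^J$, and $V^\theta := (V^0 \oplus V^J)^\perp$, then verify the four properties in turn.

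The construction of $V^0$ is immediate: for $w, w' \in W_0$, the compatibility identity $g(w, Jw') = -\omega(w, w')$ vanishes by isotropy, so $W_0 \perp JW_0$ and the sum is direct and orthogonal. The same identity shows $\omega|_{V^0}$ pairs $W_0$ with $JW_0$ non-degenerately, making $V^0$ a $J$-invariant symplectic subspace in which $W_0$ is Lagrangian. For $V^J$, the manipulation $g(Jw', v) = \omega(w', v)$ applied to $w' \in W$, $v \in W^\omega$ gives $W_+^J \perp W_-^J$, and since each summand is $J$-invariant so is $V^J$. All remaining pairwise orthogonalities ($V^0 \perp V^J$, $V^0 \perp W_\pm^\theta$, $W_\pm^J \perp W_\mp^\theta$) reduce to the same computation, and since $V^0 \oplus V^J$ is $J$-invariant its orthogonal complement $V^\theta$ is automatically $J$-invariant and symplectic.

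With this in hand, parts (1)--(3) are bookkeeping. For example, $W \cap V^0 = W_0$ follows by writing $w = w_0 + Jw_0' \in V^0 \cap W$ and noting $Jw_0' \in W \cap JW = W_+^J$, which is orthogonal to $JW_0$, forcing $Jw_0' = 0$; the analogous arguments handle the other intersections. The orthogonal decompositions $W = W_0 \oplus W_+^J \oplus W_+^\theta$ and $W^\omega = W_0 \oplus W_-^J \oplus W_-^\theta$ show $W_\pm^\theta \subseteq V^\theta$, and the dimension count $\dim V^\theta = 2n - 2n_0 - 2n_+^J - 2n_-^J = 2n_+^\theta + 2n_-^\theta$ combined with $W_+^\theta \cap W_-^\theta \subseteq W_0 \cap W_+^\theta = \{0\}$ yields the direct-sum decomposition $V^\theta = W_+^\theta \oplus W_-^\theta$.

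The main obstacle is part (4), specifically the decomposition $V^\theta = W_\pm^\theta \oplus JW_\pm^\theta$. Total realness gives $W_+^\theta \cap JW_+^\theta = \{0\}$, so the sum is direct; what remains is the dimension equality $n_+^\theta = n_-^\theta$. I would establish it via the linear map $\phi: W_+^\theta \to V^\theta / W_+^\theta$, $v \mapsto [Jv]$, well defined because $V^\theta$ is $J$-invariant. If $[Jv] = 0$, then $Jv \in W_+^\theta$, whence $v \in JW_+^\theta \cap W_+^\theta = \{0\}$; thus $\phi$ is injective, and since $V^\theta / W_+^\theta \cong W_-^\theta$ we obtain $n_+^\theta \le n_-^\theta$. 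The reverse inequality follows by exchanging $W$ with $W^\omega$, under which $W_+^\theta$ and $W_-^\theta$ swap while $W_0$ and the unordered pair $\{W_+^J, W_-^J\}$ (and hence $V^0, V^J, V^\theta$) are preserved. Equality of dimensions forces $V^\theta = W_+^\theta \oplus JW_+^\theta$, and the symmetric argument gives $V^\theta = W_-^\theta \oplus JW_-^\theta$.
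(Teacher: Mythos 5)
Your construction mirrors the paper's almost exactly: the same definitions $V^0 = W_0 + JW_0$, $V^J = W_+^J + W_-^J$, the same use of $\omega$-compatibility to derive the pairwise orthogonalities, the same dimension count to identify $V^\theta = (V^0\oplus V^J)^\perp$ with $W_+^\theta\oplus W_-^\theta$, and the same intersection bookkeeping for parts (1)--(3). (You take $V^\theta$ as an orthogonal complement from the outset, whereas the paper first defines $V^\theta := W_+^\theta + W_-^\theta$ and then proves equality, but this is cosmetic.) The only genuine variation is in how you prove $n_+^\theta = n_-^\theta$: you build an explicit injective linear map $\phi: W_+^\theta \to V^\theta/W_+^\theta \cong W_-^\theta$, $v\mapsto [Jv]$, and then invoke the $W\leftrightarrow W^\omega$ symmetry for the reverse inequality, while the paper observes that $V^\theta$ contains both $W_+^\theta\oplus JW_+^\theta$ and $W_-^\theta\oplus JW_-^\theta$ and bounds $2n_+^\theta + 2n_-^\theta$ from both sides. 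These are the same dimension count in two notations, and I would say your packaging is slightly cleaner to read.

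One point you should not gloss over: you write that ``total realness gives $W_+^\theta \cap JW_+^\theta = \{0\}$,'' but the total realness of $W_+^\theta$ is part of what the proposition asserts (item (3)), so it cannot be assumed. It must be verified, as the paper does via
\[
W_+^\theta \cap JW_+^\theta \subseteq (W\cap JW)\cap V^\theta = W_+^J\cap V^\theta \subseteq V^J\cap V^\theta=\{0\},
\]
or more directly from $W_+^\theta\subseteq W$ together with $W_+^\theta\perp W_+^J = W\cap JW$. Without this, the injectivity of $\phi$ (and the directness of the sum in part (4)) has no foundation. Once this is supplied, your proof is complete and sound.
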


\begin{proof}
We construct the spaces as $V^0 := W_0 + JW_0$, $V^J := W_+^J + W_-^J$, and $V^\theta := W_+^\theta + W_-^\theta$.
Since $J$ is $\omega$-compatible, the following pairs are orthogonal
\begin{align*} W_0 \perp JW_0, && W_+^J\perp W_-^J, &&V^0 \perp V^J, && W_\pm^\theta \perp W_{\mp}^J.\end{align*}
So we immediately obtain the orthogonal decompositions $V^0 = W_0 \oplus JW_0$, and $V^J = W_+^J \oplus W_-^J$.
Now
\begin{align*}
W_+^\theta \cap W_-^\theta &= (W \cap (W_0 \oplus W_+^J)^\perp) \cap (W^\omega \cap (W_0 \oplus W_-^J)^\perp)\\
&= (W \cap W^\omega) \cap (W_0 \oplus W_+^J \oplus W_-^J)^\perp\\
&\subseteq W_0 \cap W_0^\perp = \{0\}.
\end{align*}
So $V^\theta = W_+^\theta \oplus W_-^\theta$. From $W_\pm^\theta \perp W_{\mp}^J$, we obtain the inclusions $W_\pm^\theta \subseteq (V^0 \oplus V^J)^\perp$ and $V^\theta \subseteq (V^0 \oplus V^J)^\perp$. Moreover, since $W$ and $W^\omega$ have complementary dimensions
\[ \dim_\R V^\theta = 2n_+^\theta + 2n_-^\theta = 2n - 2n_0 - 2n_+^J - 2n_-^J = \dim_\R(V^0 \oplus V^J)^\perp.\]
Therefore $V^\theta = (V^0 \oplus V^J)^\perp$, and is thus $J$-invariant and symplectic.

The subspaces $W_+^\theta$ and $JW_+^\theta$ are transverse by
\[ W_+^\theta \cap JW_+^\theta = (W \cap W_+^\theta) \cap (J(W \cap W_+^\theta)) = (W \cap JW) \cap (W_+^\theta \cap J W_+^\theta) \subseteq V^J \cap V^\theta = \{0\}.\]
and similarly, $W_-^\theta \cap JW_-^\theta = \{0\}$. Since $V^\theta$ is $J$-invariant, $V^\theta$ contains both $W_+^\theta \oplus JW_+^\theta$ and $W_-^\theta \oplus JW_-^\theta$.
Thus $\dim _\R V^\theta = 2n_+^\theta + 2n_-^\theta \ge 2n_\pm^\theta$. Thus $n_+^\theta \ge n_-^\theta$ and $n_+^\theta \le n_-^\theta$, so $n_+^\theta = n_-^\theta$, and we obtain $V = W_\pm^\theta \oplus JW_\pm^\theta$.
\end{proof}

Denote the dimensions of $V^J$ and $V^\theta$ in Proposition \ref{prop:kahler_splittings} by $2n^J :=  \dim_\R V^J$ and $2n^\theta :=\dim_\R V^\theta$. Then we have a suggestive equality
\[ n_0 + n^J + n^\theta = n_0 + n_+ + n_- = n.\]

\begin{corollary}[Dimension constraints] \label{cor:dim_constraints}\
\begin{enumerate}
\item For any linear subspace $W \subseteq V$, the dimensions of the totally real parts of $W$ and $W^\omega$ are equal, i.e. $n_+^\theta = n_-^\theta.$
\item If a symplectic vector space $V$ contains a linear subspace $W$ such that both $W$ and $W^\omega$ are totally real symplectic subspaces of $V$, then the dimension of $V$ is an integer multiple of $4$ and the dimension of $W$ is half the dimension of $V$, i.e. $2n = 4n_+^\theta$.
\end{enumerate}
\end{corollary}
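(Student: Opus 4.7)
Both statements follow directly from Proposition \ref{prop:kahler_splittings}, and the plan is simply to repackage what has already been established. Part (1) is essentially extracted from the last paragraph of the proof of Proposition \ref{prop:kahler_splittings}, while part (2) is obtained by specializing that proposition to the hypotheses in play.

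For part (1), recall from Proposition \ref{prop:kahler_splittings} that $V^\theta$ is a $J$-invariant subspace of $V$ with $V^\theta = W_+^\theta \oplus W_-^\theta$, giving $\dim_\R V^\theta = 2n_+^\theta + 2n_-^\theta$. Since $W_+^\theta$ is totally real by construction ($W_+^\theta \cap JW_+^\theta = \{0\}$), the $J$-invariance of $V^\theta$ yields the inclusion $W_+^\theta \oplus JW_+^\theta \subseteq V^\theta$, a subspace of dimension $4n_+^\theta$. Hence $4n_+^\theta \leq 2n_+^\theta + 2n_-^\theta$, i.e. $n_+^\theta \leq n_-^\theta$. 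The symmetric argument using $W_-^\theta \oplus JW_-^\theta$ gives the reverse inequality, so $n_+^\theta = n_-^\theta$.

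For part (2), I translate the hypotheses into the invariants of Proposition \ref{prop:kahler_splittings}. The symplectic condition on $W$ forces $W_0 = W \cap W^\omega = \{0\}$ (which is symmetric in $W$ and $W^\omega$), while the totally real conditions on $W$ and $W^\omega$ separately force $W_+^J = W \cap JW = \{0\}$ and $W_-^J = W^\omega \cap JW^\omega = \{0\}$. Consequently $V^0 = W_0 + JW_0 = \{0\}$ and $V^J = W_+^J \oplus W_-^J = \{0\}$, and the decomposition $V = V^0 \oplus V^J \oplus V^\theta$ collapses to $V = V^\theta$. Applying part (1), we obtain $\dim_\R V = 2n_+^\theta + 2n_-^\theta = 4n_+^\theta$, which is divisible by $4$, and $\dim_\R W = 2n_+^\theta = \tfrac{1}{2}\dim_\R V$.

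There is no real obstacle here: the entire content of the corollary already lives inside Proposition \ref{prop:kahler_splittings}, and the only care needed is matching the words \emph{totally real} and \emph{symplectic} to the vanishing statements $W_\pm^J = \{0\}$ and $W_0 = \{0\}$ for the pair $(W, W^\omega)$, noting in particular that $(W^\omega)_0 = W_0$ so the symplectic hypothesis on $W$ automatically passes to $W^\omega$.
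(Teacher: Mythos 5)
Your proof is correct and follows essentially the same route as the paper's: the paper simply cites the two equalities $V^\theta = W_+^\theta \oplus W_-^\theta$ and $V^\theta = W_\pm^\theta \oplus JW_\pm^\theta$ from Proposition \ref{prop:kahler_splittings}, and your argument is just a more detailed unpacking of exactly that dimension count, together with the straightforward observation that the hypotheses in part (2) force $V^0 = V^J = \{0\}$.
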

\begin{proof}
This follows from $V^\theta = W_+^\theta\oplus W_-^\theta$ and $V^\theta = W_\pm^\theta \oplus JW_\pm^\theta$. 
\end{proof}

\begin{proposition}[Minimal complex subspaces]\label{prop:minimum_complex}
Let $W\subseteq V$ be a $k$ dimensional linear subspace with $k > n$. Then $W$ contains a complex subspace with complex dimension at least $k-n$.
\end{proposition}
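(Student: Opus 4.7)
The plan is to prove this by a direct dimension count applied to the subspace $W \cap JW$, which is the maximal complex subspace contained in $W$.

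First I would verify that $W \cap JW$ is indeed a complex (i.e., $J$-invariant) subspace of $V$: if $v \in W \cap JW$, then $v \in W$ implies $Jv \in JW$, while $v = Jw$ for some $w \in W$ implies $Jv = -w \in W$, so $Jv \in W \cap JW$. Thus $W \cap JW$ carries a complex vector space structure, and its real dimension is even, equal to twice its complex dimension.

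Next I would apply the standard formula
\[
\dim_\R(W + JW) = \dim_\R W + \dim_\R(JW) - \dim_\R(W \cap JW) = 2k - \dim_\R(W \cap JW),
\]
using that $J$ is an $\R$-linear isomorphism so $\dim_\R(JW) = \dim_\R W = k$. Since $W + JW \subseteq V$, the left-hand side is at most $2n$, giving
\[
\dim_\R(W \cap JW) \geq 2k - 2n = 2(k-n).
\]
Dividing by $2$ yields $\dim_\C(W \cap JW) \geq k - n$, which is the required bound.

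There is no real obstacle here; the statement is a one-line linear algebra observation once one recognizes that $W \cap JW$ is the natural candidate for the complex subspace. The hypothesis $k > n$ is exactly what is needed to make the lower bound nontrivial, and for $k \leq n$ the bound becomes vacuous (or the zero subspace suffices).
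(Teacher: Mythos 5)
Your proof is correct, but it takes a genuinely different and more elementary route than the paper. The paper proves the bound by subtracting the dimension formulas $k = n_0 + 2n_+^J + 2n_+^\theta$ and $2n-k = n_0 + 2n_-^J + 2n_-^\theta$ coming from the orthogonal decompositions $W = W_0 \oplus W_+^J \oplus W_+^\theta$ and $W^\omega = W_0 \oplus W_-^J \oplus W_-^\theta$ of Proposition \ref{prop:kahler_splittings}, then invokes the identity $n_+^\theta = n_-^\theta$ (Corollary \ref{cor:dim_constraints}) to deduce $k - n = n_+^J - n_-^J$, whence $n_+^J \ge k-n$ since $n_-^J \ge 0$. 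Your argument bypasses the symplectic structure entirely: you observe that $W \cap JW$ is $J$-invariant and use the inclusion $W + JW \subseteq V$ together with the dimension formula for the sum of two subspaces. Your version is shorter and requires no prior machinery, and it even shows the hypothesis that $J$ be $\omega$-compatible is irrelevant for this particular statement. What the paper's route buys is the sharper refinement $k - n = n_+^J - n_-^J$, which ties the complex dimension of $W \cap JW$ to that of $W^\omega \cap JW^\omega$ in the notation used throughout Section \ref{subsec:kahler_angles}, and this stronger identity is what makes the proposition fit naturally into the Kähler-angle framework the paper has just set up.
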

\begin{proof}
We subtract the dimension formulas for $W$ and $W^\omega$:
\begin{align*}
k &= n_0 + 2n_+^J + 2n_+^\theta\\
2n-k &= n_0 + 2n_-^J + 2n_-^\theta.
\end{align*}
Then $k-n = n_+^J - n_-^J$. Since $0 \le n_-^J \le \lfloor (2n-k)/2\rfloor$, we have $n_+^J \ge k-n.$
\end{proof}

Recall the following definition of the K\"{a}hler angle (see \cite{Scharnhorst} and references therein):
\begin{definition}[K\"{a}hler Angle]\label{def:kahler_angle}
Let $W$ be a $2$-dimensional linear subspace of $V$. The \textbf{K\"{a}hler angle} $\theta_W \in [0, \pi/2]$ is the unique angle satisfying
\[
\cos \theta_W = |\omega(u, v)|
\]
for any pair orthonormal vectors of $W$. The angle $\theta_W$ measures the deviation of $W$ from being a complex line ($\theta_W = 0$) or an isotropic plane ($\theta_W = \pi/2$).
\end{definition}

\begin{theorem}[Relative Darboux theorem for totally real symplectic subspaces] \label{thm:totally_real_darboux}
Let $V$ be a symplectic vector space of dimension $2n = 4m$, let $J$ be an $\omega$-compatible linear complex structure and let $W \subseteq V$ be a totally real symplectic linear subspace of dimension $2m$. Then $W^\omega$ is totally real symplectic and there exists a Darboux basis of $V$:
\begin{align*}
\{e_1^+, \dots, e_{k}^+, \quad e_1^{-}, \dots, e_{k}^{-}, \quad f_1^+, \dots, f_{k}^+, \quad f_1^-, \dots, f_{k}^-\}
\end{align*}
and angles $\theta^\pm_1, \dots, \theta^\pm_k \in (0, \pi/2)$ such that:
\begin{enumerate}
    \item $\{ e^+_j, \cos \theta^+_j \cdot f_j^+\}_{j = 1}^k$ is an orthonormal basis of $W$.
    \item $\{e_j^-, \cos \theta^-_j \cdot f^-_j\}_{j = 1}^k$ is an orthonormal basis of $W^\omega$.
\end{enumerate}
\end{theorem}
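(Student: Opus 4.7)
The plan is to reduce the theorem to a singular value decomposition applied separately to $W$ and $W^\omega$, with the desired angles arising as arccosines of the singular values of a canonical skew-adjoint operator. First I would check that $W^\omega$ is again totally real symplectic. Since $W$ is symplectic, $W_0 = W \cap W^\omega = \{0\}$, so $W^\omega$ is symplectic as well. Moreover $W$ has type $(0,m,m)$ with $n_+^J = 0$ and $n_+^\theta = m$; Corollary \ref{cor:dim_constraints} then forces $n_-^\theta = n_+^\theta = m$, whence $n_-^J = 0$, i.e.\ $W^\omega \cap JW^\omega = \{0\}$.

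Next, encode $\omega|_W$ through a $g$-skew-adjoint endomorphism. Define $B^+ : W \to W$ by $B^+ u := P_W(Ju)$, where $P_W$ is the orthogonal projection onto $W$. The compatibility identity $\omega(u,v) = g(Ju,v)$, together with $v \in W$, gives $\omega|_W(u,v) = g(B^+ u, v)$; using $J^* = -J$ and $P_W^* = P_W$ one checks $B^+$ is $g$-skew-adjoint on $W$. Nondegeneracy of $\omega|_W$ forces $B^+$ to be invertible, and the inequality $\|B^+ u\| = \|P_W Ju\| \le \|Ju\| = \|u\|$ is strict for every nonzero $u$, because equality would place $Ju \in W$, contradicting $W \cap JW = \{0\}$. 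Hence the singular values of $B^+$ all lie in $(0,1)$. The real normal form of a skew-adjoint operator on a Euclidean space then yields an orthonormal basis $\{e_j^+, \tilde{f}_j^+\}_{j=1}^m$ of $W$ with $B^+ e_j^+ = \lambda_j^+ \tilde{f}_j^+$ and $B^+ \tilde{f}_j^+ = -\lambda_j^+ e_j^+$ for some $\lambda_j^+ \in (0,1)$. Setting $\cos\theta_j^+ := \lambda_j^+$ (so $\theta_j^+ \in (0,\pi/2)$) and $f_j^+ := \tilde{f}_j^+ / \cos\theta_j^+$, a direct computation using $\omega|_W = g(B^+\cdot,\cdot)$ verifies $\omega(e_j^+, e_k^+) = \omega(f_j^+, f_k^+) = 0$ and $\omega(e_j^+, f_k^+) = \delta_{jk}$, while $\{e_j^+, \cos\theta_j^+ f_j^+\} = \{e_j^+, \tilde{f}_j^+\}$ is orthonormal by construction.

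Running the identical construction with $B^- := P_{W^\omega} J|_{W^\omega}$ produces the basis $\{e_j^-, f_j^-\}$ of $W^\omega$ and the angles $\theta_j^- \in (0,\pi/2)$. The cross-pairings $\omega(x^+, y^-) = 0$ for $x^+ \in \{e_j^+, f_j^+\}$ and $y^- \in \{e_k^-, f_k^-\}$ are automatic because $y^- \in W^\omega$, so the concatenated basis is a Darboux basis of $V$. The only place where the argument is not purely linear-algebraic bookkeeping is the strict contractivity $\|B^+ u\| < \|u\|$, which is exactly the translation of the total reality of $W$ into the spectral statement $\lambda_j^+ < 1$; with that in hand, everything else is the classical normal form of a skew-adjoint operator and a diagonal rescaling.
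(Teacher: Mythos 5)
Your argument is correct and follows essentially the same route as the paper: you reduce to the skew-adjoint operator $T = P_W J|_W$ on $W$ (your $B^+$), use the compatibility identity to show $\omega|_W = g(T\cdot,\cdot)$, derive the spectral constraints $0 < \lambda_j < 1$ from nondegeneracy and total reality respectively, and then read off the Darboux basis from the real normal form of $T$ after a diagonal rescaling — which is exactly the paper's construction via the eigenspaces of $S = -T^2$. The only cosmetic difference is that the paper phrases the normal form through the positive-definite self-adjoint $S$ and writes out the computation only for $W$, whereas you state it for the skew-adjoint operator directly and spell out both $W$ and $W^\omega$ together with the trivial cross-pairings; the underlying argument is identical.
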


\begin{proof}
By Corollary \ref{cor:dim_constraints}, $n_+^\theta = n_-^\theta = n$, so $n_-^J = n_0 = 0$. So $W^\omega$ is totally real symplectic. Since $V = W \oplus W^\omega$, it suffices to construct the adapted basis for $W$.

Consider the orthogonal projection $P: V \to W$ and the operator $T: W \to W$ defined by $Tu = P(Ju)$.  The operator $S = -T^2$ is self-adjoint and positive definite. Since $W$ is totally real ($W \cap JW = \{0\}$), $0$ is not an eigenvalue of $S$. Since $W$ is symplectic, $1$ is not an eigenvalue. Thus, the eigenvalues of $S$ lie strictly in $(0, 1)$.

Let $E_j \subseteq W$ be the 2-dimensional eigenspaces of $S$ corresponding to eigenvalues $\lambda_j = (\cos \theta^+_j)^2$. For each $j$, choose a unit vector $e_j^+ \in E_j$. Define $f_j^+$ by:
\[ f_j^+ := (\sec \theta^+_j)^2 T e_j^+. \]
Since $S$ and $T$ commute, $f_j^+$ is also an eigenvector in $E_j \subseteq W$. We can compute  \begin{align*} \omega(e_j^+, f_j^+) =   g(J e_{j}^+, Pf_j^+) = g(PJe_{j}^+, f_j^+)= (\sec \theta^+_j)^2 |Te_j^+|^2 = 1. \end{align*}

Moreover, observe that $g(Tu, v) = g(PJu, v) = g(Ju, v) = \omega(u, v)$ so $T$ is skew-adjoint.
Thus $g(e_j^+, f_j^+) = g(e_j^+, T e_j^+) = 0$. The norm is:
    \[ |f_j^+| = (\sec \theta^+_j)^2 |T e_j^+| = (\sec \theta^+_j)^2 \sqrt{\lambda_j} = \sec \theta^+_j. \]
Thus $\{e_j^+, \cos \theta^+_j \cdot f_j^+\}$ is an orthonormal basis for $E_j$.
\end{proof}

\begin{corollary}[K\"{a}hler Angle Decomposition] \label{cor:kahler_decomp} Let $W \subseteq V$ and $V^\theta$ be as in Proposition \ref{prop:kahler_splittings}. Then there exists an orthogonal decomposition of $V^\theta$ into $J$-invariant symplectic subspaces of real dimension $4$
\[ V^\theta  = \bigoplus_{j = 1}^{n_+^\theta} V^\theta_j\]
such that $W_j^\theta := W \cap V_j^\theta$ is a two dimensional  totally real symplectic linear subspace of $V_j^\theta$ with K\"{a}hler angle in the interval $(0, \pi/2)$.
\end{corollary}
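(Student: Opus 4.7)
The strategy is to apply Theorem \ref{thm:totally_real_darboux} to the symplectic vector space $V^\theta$, in which $W_+^\theta$ is a totally real symplectic subspace of dimension exactly half that of $V^\theta$ (by Corollary \ref{cor:dim_constraints}, since $n_+^\theta = n_-^\theta$, so $W_-^\theta$ plays the role of $(W_+^\theta)^\omega$ inside $V^\theta$). The theorem produces the self-adjoint positive definite operator $S = -T^2$ on $W_+^\theta$, where $Tu := P(Ju)$ and $P$ is the orthogonal projection onto $W_+^\theta$. Its eigenvalues lie strictly in $(0,1)$, and since $T$ is skew-adjoint and non-degenerate on each $S$-eigenspace, these eigenspaces decompose into two-dimensional $T$-invariant blocks $E_j$, each spanned by an adapted pair $\{e_j^+, f_j^+\}$ with corresponding angle $\theta_j^+ \in (0, \pi/2)$.

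Next, I set $V_j^\theta := E_j + JE_j$ for each $j = 1, \dots, n_+^\theta$ and verify four properties. It is $4$-dimensional because $E_j \cap JE_j \subseteq W_+^\theta \cap JW_+^\theta = \{0\}$, and $J$-invariant by construction. It is symplectic because for any nonzero $v \in V_j^\theta$, $Jv \in V_j^\theta$ and $\omega(v, Jv) = g(v, v) > 0$, so $\omega|_{V_j^\theta}$ is nondegenerate. To verify $V_i^\theta \perp V_j^\theta$ for $i \ne j$, I check the four pairings: $E_i \perp E_j$ by self-adjointness of $S$; $JE_i \perp JE_j$ because $J$ is a $g$-isometry; and the cross terms $E_i \perp JE_j$ by writing $g(u, Jv) = g(u, Tv) + g(u, Jv - Tv)$ for $u \in E_i$, $v \in E_j$, noting that $Jv - Tv \in (W_+^\theta)^\perp$ vanishes against $u$, while $Tv \in E_j$ is orthogonal to $u \in E_i$. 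A dimension count ($n_+^\theta$ blocks of dimension $4$, summing to $4 n_+^\theta = \dim V^\theta$) then yields the orthogonal direct sum $V^\theta = \bigoplus_j V_j^\theta$.

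It remains to identify $W_j^\theta := W \cap V_j^\theta$. Since $W \cap V^\theta = W_+^\theta$, we have $W_j^\theta = W_+^\theta \cap V_j^\theta$; if $u + Jv \in W_+^\theta$ with $u, v \in E_j$, then $Jv \in W_+^\theta$, forcing $v \in W_+^\theta \cap JW_+^\theta = \{0\}$ by the totally real property, so $W_j^\theta = E_j$. This is two-dimensional, totally real (as a subspace of $W_+^\theta$), and symplectic (since $\omega(e_j^+, f_j^+) = 1$). The K\"{a}hler angle is then read directly from the orthonormal basis $\{e_j^+, \cos \theta_j^+ \cdot f_j^+\}$: $|\omega(e_j^+, \cos\theta_j^+ \cdot f_j^+)| = \cos \theta_j^+$, so $\theta_{W_j^\theta} = \theta_j^+ \in (0, \pi/2)$. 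I expect the main technical obstacle to be the mutual orthogonality of the $V_j^\theta$'s, specifically the cross term computation $g(u, Jv) = g(u, Tv)$, since this is where the spectral decomposition of $S$ enters essentially and ties the $J$-dynamics to the Darboux adaptation.
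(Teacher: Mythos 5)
Your proof is correct and follows essentially the same strategy as the paper's: apply Theorem~\ref{thm:totally_real_darboux} to the pair $(V^\theta, W_+^\theta)$, set $V_j^\theta := E_j \oplus JE_j$ where $E_j = \Span_\R\{e_j^+, f_j^+\}$, and read the K\"ahler angle off the adapted basis. You spell out the mutual orthogonality of the blocks and the identification $W \cap V_j^\theta = E_j$ that the paper leaves implicit, but this is the same route, not a different one.
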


\begin{proof}
$W_+^\theta \subseteq V^\theta$ is a totally real symplectic subspace, that is half the dimension of $V^\theta$. We take the Darboux basis from Theorem \ref{thm:totally_real_darboux}. Then for all $1 \le j \le n_+^\theta$ let $\widetilde{W}_j^\theta := \Span_\R\{ e_j^+, f_j^+\}$ and $V_j^\theta := \widetilde{W}_j^\theta + J\widetilde{W}_j^\theta$. Since $W_+^\theta$ is totally real, $V_j^\theta := \widetilde{W}_j^\theta \oplus J\widetilde{W}_j^\theta$. $V_j^\theta$ is symplectic and $J$-invariant, so this gives an orthogonal decomposition of $V^\theta$. By construcion $W_j^\theta =\widetilde{W}_j^\theta$, and thus is a two dimensional totally real symplectic subspace with K\"{a}hler angle in $(0, \pi/2)$.
\end{proof}

\subsection{$J$-compatible subspaces} \label{subsec:unitary}

\begin{definition}[$J$-Compatibility]
A linear subspace $W \subseteq V$ of type $\vec{n}$ is called \textbf{$J$-compatible} if it is a direct sum of its isotropic kernel $W_0 = W \cap W^\omega$ and its maximal complex subspace $W_+^J = W \cap JW$.
\end{definition}

Let $\U(V, J) \subseteq \Sp(V)$ be the subgroup of linear symplectomorphisms of $V$ that commute with $J$. $\U(V, J)$ is isomorphic to $\U(n)$. Moreover, let $\Gr_J(\vec{n}; V) \subseteq \Gr(\vec{n}; V)$ denote the Grassmannian of $J$-compatible linear subspaces. The subgroup $\U(V, J)$ acts transitively on $\Gr_J(\vec{n}; V)$, as follows from the following version of the Darboux theorem:

\begin{theorem}\label{thm:Jcompatible_darboux}
Let $W \subseteq V$ be a $J$-compatible linear subspace. Then there exists an orthonormal Darboux basis of $V$ of the form
\begin{align*}
\{e_1^0, \dots, e_{n_0}^0,\quad&  e_1^+, \dots, e_{n_+}^+,\quad  e_1^-, \dots, e_{n_-}^-,\\ Je_1^0, \dots, Je_{n_0}^0,&\quad   Je_1^+, \dots, Je_{n_+}^+,\quad Je_1^-, \dots, Je_{n_-}^-\}
\end{align*}
such that $W_0 = \Span_\R\{ e_j^0\}_{j = 1}^{n_0}$, and $W_\pm^J = \Span_\R\{e^\pm_{\ell}, Je^\pm_{\ell}\}_{\ell = 1}^{n_\pm}$.
\end{theorem}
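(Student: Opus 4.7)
The plan is to combine two standard facts. First, the $J$-compatibility hypothesis kills the totally real part of $W$, reducing the problem to a product of ``pure'' building blocks; second, on each building block an orthonormal Darboux basis adapted to $J$ is produced by a unitary choice of basis.

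I would begin by showing $W_+^\theta = 0$. Indeed, if $W = W_0 \oplus W_+^J$ as an internal direct sum, then taking orthogonal complements inside $W$ forces $W_+^\theta = W \cap (W_0 \oplus W_+^J)^\perp = \{0\}$. By Corollary \ref{cor:dim_constraints} we have $n_+^\theta = n_-^\theta$, so $W_-^\theta = 0$ as well. Proposition \ref{prop:kahler_splittings} then gives the \emph{orthogonal symplectic} decomposition
\[
V = V^0 \oplus V^J, \qquad V^0 = W_0 \oplus JW_0, \qquad V^J = W_+^J \oplus W_-^J,
\]
into $J$-invariant pieces, with $W = W_0 \oplus W_+^J$ and $W^\omega = W_0 \oplus W_-^J$. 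Since the summands are $\omega$-orthogonal, concatenating Darboux bases of each will yield a Darboux basis of $V$, so it suffices to build adapted orthonormal Darboux bases on $V^0$, $W_+^J$, and $W_-^J$ separately.

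Next I would construct the basis on each summand. On $V^0$, pick any $g$-orthonormal basis $\{e_1^0, \dots, e_{n_0}^0\}$ of the Lagrangian subspace $W_0$ and take its $J$-image. Since $W_0$ is isotropic, $\omega(e_j^0, e_k^0) = 0$; from $g = \omega(\cdot, J\cdot)$ one gets $\omega(e_j^0, Je_k^0) = g(e_j^0, e_k^0) = \delta_{jk}$; and $J$-invariance of $\omega$ gives $\omega(Je_j^0, Je_k^0) = 0$. On each of the Hermitian complex vector spaces $W_\pm^J$, choose a unitary basis $\{e_1^\pm, \dots, e_{n_\pm}^\pm\}$, for which $\{e_\ell^\pm, Je_\ell^\pm\}$ is automatically $g$-orthonormal. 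The identities $\omega(u, v) = g(Ju, v)$ together with $g(Je_i, e_j) = g(Je_i, Je_j) = 0$ for $i \ne j$ (and $g(Je_i, e_i) = 0$, $g(Je_i, Je_i) = 1$) then force the pairings to be Darboux: $\omega(e_i^\pm, e_j^\pm) = 0$, $\omega(Je_i^\pm, Je_j^\pm) = 0$, and $\omega(e_i^\pm, Je_j^\pm) = \delta_{ij}$.

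Finally, concatenating these three bases in the order prescribed by the theorem gives the claimed basis. Cross-pairings of $\omega$ between vectors in different summands vanish because $V^0$, $W_+^J$, and $W_-^J$ are pairwise $\omega$-orthogonal (the first by construction, the latter two because $W_+^J \subseteq W$ and $W_-^J \subseteq W^\omega$). The spanning statements $W_0 = \Span_\R\{e_j^0\}$ and $W_\pm^J = \Span_\R\{e_\ell^\pm, Je_\ell^\pm\}$ hold by construction.

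There is no real obstacle here: the $J$-compatibility hypothesis has eliminated the Kähler-angle part $V^\theta$, which is precisely the piece where a nontrivial relative Darboux argument (Theorem \ref{thm:totally_real_darboux}) was needed. What remains is the standard fact that a unitary basis of a Hermitian vector space yields a real orthonormal symplectic Darboux basis via the pairs $(e_\ell, Je_\ell)$, plus the observation that orthogonal symplectic summands can be treated independently. The only point demanding care is bookkeeping the three systems of indices and checking that the full concatenated collection is simultaneously $g$-orthonormal and symplectically dual.
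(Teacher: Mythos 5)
Your proof is correct and follows essentially the same route as the paper: reduce to the orthogonal $J$-invariant decomposition $V = (W_0 \oplus JW_0) \oplus W_+^J \oplus W_-^J$ via Proposition~\ref{prop:kahler_splittings}, then assemble the basis from an orthonormal basis of $W_0$ and unitary bases of $W_\pm^J$. The only cosmetic differences are that you make the vanishing of $V^\theta$ explicit (the paper leaves it implicit) and you phrase the construction on $W_\pm^J$ via unitary bases rather than Lagrangian splittings, but these are the same construction.
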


\begin{proof}
By Proposition \ref{prop:kahler_splittings}, the vector space $V$ admits the orthogonal decomposition into $J$-invariant symplectic subspaces
\[
V = (W_0 \oplus J W_0) \oplus W_+^J \oplus W_-^J.
\]
Let $\{e_1^0, \dots, e_{n_0}^0\}$ be an orthonormal basis for $W_0$ with respect to the metric $g$. Since $W_0$ is isotropic, $\omega(e_i^0, e_j^0) = 0$. Since $g$ is $J$-invariant, the set $\{Je_1^0, \dots, Je_{n_0}^0\}$ is an orthonormal basis for $JW_0$. The combined set $\{e_j^0, Je_j^0\}_{j=1}^{n_0}$ forms a Darboux basis for the symplectic subspace $W_0 \oplus JW_0$, satisfying $\omega(e_j^0, Je_k^0) = g(e_j^0, e_k^0) = \delta_{jk}$.
The subspaces $W_\pm^J$ are symplectic and $J$-invariant. We choose Lagrangian splittings $L_{\pm} \oplus L_{\pm}'$ of $W_\pm^J$ and orthonormal bases $\{e_1^\pm, \dots, e_{n_\pm}^\pm\}$ of $L_\pm$. Then $\{e_\ell^\pm, Je_\ell^\pm\}_{\ell=1}^{n\pm}$ are orthonormal Darboux bases of $W_\pm^J$.

Since the decomposition is $g$-orthogonal and $J$-invariant, the union of these bases constitutes the desired Darboux basis for $V$.
\end{proof}

Since $\U(V, J)$ is compact and connected, $\Gr_J(\vec{n}; V) \subseteq \Gr(\vec{n}; V)$ is compact and connected.

\begin{corollary}
\label{cor:iso_coiso_compactness}
    If a linear subspace $W$ is either isotropic or coisotropic, it is $J$-compatible for any $\omega$-compatible linear complex structure $J$ of $V$. Thus if either $n_+ = 0$ or $n_- = 0$
    \[ \Gr_J(\vec{n}; V) = \Gr(\vec{n}; V).\]
\end{corollary}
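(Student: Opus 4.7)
The plan is to identify $J$-compatibility of $W$ with the vanishing of its totally real symplectic part $W_+^\theta$ via Proposition \ref{prop:kahler_splittings}, and then to rule out any such part under each hypothesis $n_+ = 0$ or $n_- = 0$ using the dimension identities together with the symmetry $n_+^\theta = n_-^\theta$.

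First, I would note that Proposition \ref{prop:kahler_splittings} provides the orthogonal decompositions $W = W_0 \oplus W_+^J \oplus W_+^\theta$ and $W^\omega = W_0 \oplus W_-^J \oplus W_-^\theta$, with $\dim_\R W_\pm^J = 2n_\pm^J$ and $\dim_\R W_\pm^\theta = 2n_\pm^\theta$. Comparing against $\dim_\R W = n_0 + 2n_+$ and $\dim_\R W^\omega = n_0 + 2n_-$ yields the bookkeeping identities
\[ n_+^J + n_+^\theta = n_+, \qquad n_-^J + n_-^\theta = n_-. \]
The $J$-compatibility condition $W = W_0 \oplus W_+^J$ is, by dimension count against the orthogonal Kähler decomposition of $W$, equivalent to $W_+^\theta = \{0\}$, i.e.\ to $n_+^\theta = 0$.

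With this reduction in hand, both cases become immediate. If $n_+ = 0$, then the first identity forces $n_+^\theta = 0$ directly. If $n_- = 0$, then the second identity forces $n_-^\theta = 0$, and Corollary \ref{cor:dim_constraints}(1), which asserts $n_+^\theta = n_-^\theta$, transports this vanishing back to $n_+^\theta = 0$. In either case $W$ is $J$-compatible, so $\Gr(\vec n; V) \subseteq \Gr_J(\vec n; V)$; the reverse inclusion is by definition.

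The only point requiring any care is the coisotropic case: the hypothesis $n_- = 0$ constrains $W^\omega$ rather than $W$, so the Kähler-angle symmetry $n_+^\theta = n_-^\theta$ from Corollary \ref{cor:dim_constraints} is indispensable for transferring the conclusion from the complement back to $W$ itself. Beyond this observation, the argument is a routine bookkeeping consequence of the orthogonal Kähler decomposition already established.
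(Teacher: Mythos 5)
Your proof is correct and follows essentially the same route as the paper: both reduce $J$-compatibility to the vanishing of $n_+^\theta$, handle the isotropic case directly from $n_+ = n_+^J + n_+^\theta$, and invoke the symmetry $n_+^\theta = n_-^\theta$ of Corollary \ref{cor:dim_constraints}(1) for the coisotropic case. The paper compresses all of this into the one-liner ``If $n_{\mp}= 0$, then $n_{\mp}^\theta = 0$, so $n_{\pm}^J = n_{\pm}$,'' leaving the appeal to the Kähler-angle symmetry implicit; your version spells out exactly the step that makes that compressed notation valid.
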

\begin{proof}
If $n_{\mp}= 0$, then $n_{\mp}^\theta = 0$, so $n_{\pm}^J = n_{\pm}$.
\end{proof}

\begin{proposition}
For a $J$-compatible linear subspace $W$, the stabilizer of $W$ in $\U(V, J)$ is given by:
\[ \U(V, J) \cap \Sp_W(V) \cong O(W_0) \times U(W_+) \times U(W_-) \cong O(n_0) \times U(n_+) \times U(n_-). \]
\end{proposition}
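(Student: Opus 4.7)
The plan is to exhibit the stabilizer as the group of triples $(A,B,C)$ acting component-wise on the orthogonal $J$-invariant decomposition of $V$ afforded by $J$-compatibility. The first step is to record that when $W$ is $J$-compatible we have $W_+^\theta = 0$, so by Corollary \ref{cor:dim_constraints} also $W_-^\theta = 0$; hence $V^\theta = 0$ and Proposition \ref{prop:kahler_splittings} collapses to the orthogonal decomposition into $J$-invariant symplectic subspaces
\[
V = (W_0 \oplus JW_0) \oplus W_+^J \oplus W_-^J,
\]
with $W = W_0 \oplus W_+^J$ and $W^\omega = W_0 \oplus W_-^J$. In particular $W_+ = W_+^J$ and $W_- = W_-^J$, and Theorem \ref{thm:Jcompatible_darboux} supplies a compatible orthonormal Darboux basis on these blocks.

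Next, I would define the restriction homomorphism
\[
\Psi:\ \U(V,J)\cap \Sp_W(V)\ \longrightarrow\ \O(W_0)\times \U(W_+)\times \U(W_-),\qquad g\longmapsto (g|_{W_0},\, g|_{W_+},\, g|_{W_-}),
\]
and check that it is well-defined. Any $g$ in the domain preserves $g$-orthogonal complements (it is unitary), preserves $\omega$, and commutes with $J$; hence it preserves $W_0 = W\cap W^\omega$, $W_+^J = W\cap JW$, and $W_-^J = W^\omega\cap JW^\omega$. Since $g$ preserves the metric $g$, its restriction to the real inner-product space $W_0$ lies in $\O(W_0)$; and since $g$ additionally commutes with $J$ and preserves $\omega$, its restriction to the complex symplectic subspaces $W_\pm$ lies in $\U(W_\pm, J)$, which is the unitary group because the Hermitian form $g - i\omega$ is preserved.

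For injectivity, observe that $g$ is determined on $V^0 = W_0\oplus JW_0$ by $g|_{W_0}$, since $g(Jv) = Jg(v)$ for $v\in W_0$; and the remaining components are exactly $g|_{W_\pm}$, so $\Psi$ is injective. For surjectivity, given any triple $(A,B,C)\in \O(W_0)\times \U(W_+)\times \U(W_-)$, define $g$ block-wise by $g|_{W_0} = A$, $g|_{JW_0} = JAJ^{-1}$, $g|_{W_+} = B$, $g|_{W_-} = C$. By construction $g$ commutes with $J$ and stabilizes $W$; $g$-orthogonality is immediate block by block; and $\omega$-preservation reduces (via the Darboux basis of Theorem \ref{thm:Jcompatible_darboux} and the identity $\omega(\cdot,J\cdot)=g(\cdot,\cdot)$) to the orthogonality of $A$ on $W_0\oplus JW_0$ and to the fact that $B,C$ are already symplectic on $W_\pm^J$. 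Choosing the orthonormal Darboux basis then identifies $\O(W_0)\times \U(W_+)\times \U(W_-)$ with $\O(n_0)\times \U(n_+)\times \U(n_-)$.

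The only non-routine step is the $\omega$-preservation check on the cross terms between $W_0$ and $JW_0$; the rest is bookkeeping against the orthogonal $J$-invariant decomposition provided by $J$-compatibility.
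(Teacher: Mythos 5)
Your proof is correct, but it takes a genuinely different route from the paper's. The paper proves this by decomposing a stabilizer element as $g = s\cdot h$ with $s \in G_\sigma$ a Levi factor and $h \in H(W)$ in the unipotent radical (using Proposition~\ref{prop:stabilizer}), then grinding through explicit $6\times 6$ block matrices to show that commuting with $J$ forces $h = \Id$ and constrains $s$ to lie in $\O(n_0)\times\U(n_+)\times\U(n_-)$. You instead observe that $J$-compatibility collapses the K\"ahler-angle splitting to the intrinsic orthogonal $J$-invariant decomposition $V = (W_0 \oplus JW_0) \oplus W_+^J \oplus W_-^J$, and you define a restriction homomorphism onto the three blocks, checking well-definedness (each block is preserved because its intrinsic characterization --- $W\cap W^\omega$, $W\cap JW$, $W^\omega\cap JW^\omega$ --- is $g$-stable), injectivity ($g$ on $JW_0$ is recovered from $g$ on $W_0$ via $J$-equivariance), and surjectivity (reassembling a block-diagonal element). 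Your argument is cleaner and basis-free, avoiding the unipotent radical entirely; the paper's argument buys an explicit illustration of how fixing $J$ kills the unipotent radical and singles out a distinguished Levi factor, which is the remark that closes the section. One stylistic note: you flag the cross-term $\omega$-preservation check as the only non-routine step, but since preservation of the metric and commutation with $J$ already imply preservation of $\omega = g(J\cdot,\cdot)$, you could dispense with that check entirely rather than defer it.
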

\begin{proof}
We work in the Darboux basis $\mathcal{B} = \{e^0, e^+, e^-, f^0, f^+, f^-\}$ from Theorem \ref{thm:rel_darboux}. Since $W$ is $J$-compatible, we align the complex structure such that $Je^\bullet = f^\bullet$ and $Jf^\bullet = -e^\bullet$. In this specific basis ordering, $J$ takes the standard form:
\[
J = \begin{pmatrix} 0 & -\Id \\ \Id & 0 \end{pmatrix}.
\]
An element $g \in \Sp_W(V)$ stabilizes $W$ and lies in $\U(V, J)$ if and only if it commutes with $J$. Any such $g$ can be uniquely factored as $g = s \cdot h$, where $s$ is in the Levi factor $G_\sigma$ and $h$ is in the unipotent radical $H(W)$.

Using the specific basis order $\mathcal{B}$, an element $h \in H(W)$ has the following explicit $6 \times 6$ block form:
\[
h = \begin{pmatrix}
\Id_{n_0} & E_+ & E_- & Y & F_+ & F_- \\
0 & \Id_{n_+} & 0 & F_+^T & 0 & 0 \\
0 & 0 & \Id_{n_-} & F_-^T & 0 & 0 \\
0 & 0 & 0 & \Id_{n_0} & 0 & 0 \\
0 & 0 & 0 & -E_+^T & \Id_{n_+} & 0 \\
0 & 0 & 0 & -E_-^T & 0 & \Id_{n_-}
\end{pmatrix}.
\]
We compute the commutator with $J = \left(\begin{smallmatrix} 0 & -\Id \\ \Id & 0 \end{smallmatrix}\right)$.
First, consider the product $hJ$:
\[
hJ
=
\begin{pmatrix}
Y & F_+ & F_- & -\Id & -E_+ & -E_- \\
F_+^T & 0 & 0 & 0 & -\Id & 0 \\
F_-^T & 0 & 0 & 0 & 0 & -\Id \\
\Id & 0 & 0 & 0 & 0 & 0 \\
-E_+^T & \Id & 0 & 0 & 0 & 0 \\
-E_-^T & 0 & \Id & 0 & 0 & 0
\end{pmatrix}.
\]
Next, consider the product $Jh$:
\[
Jh=
\begin{pmatrix}
0 & 0 & 0 & -\Id & 0 & 0 \\
0 & 0 & 0 & E_+^T & -\Id & 0 \\
0 & 0 & 0 & E_-^T & 0 & -\Id \\
\Id & E_+ & E_- & Y & F_+ & F_- \\
0 & \Id & 0 & F_+^T & 0 & 0 \\
0 & 0 & \Id & F_-^T & 0 & 0
\end{pmatrix}.
\]
Comparing the $(1,1)$ blocks ($Y$ vs $0$) yields $Y=0$. Comparing the $(1,2)$ blocks ($F_+$ vs $0$) yields $F_+ = 0$. Comparing $(1,5)$ blocks ($-E_+$ vs $0$) yields $E_+ = 0$. The same logic applies to $E_-, F_-$. Thus, $h = \Id$.

Since $h=\Id$, $g = s \in G_\sigma$. In the basis $\mathcal{B}$, $s$ is block-diagonal with respect to the sectors $W_0 \oplus W^0$ (indices 1,4), $W_+$ (indices 2,5), and $W_-$ (indices 3,6) :
\[
s = \begin{pmatrix}
X & 0 & 0 & 0 & 0 & 0 \\
0 & A_+ & 0 & 0 & B_+ & 0 \\
0 & 0 & A_- & 0 & 0 & B_- \\
0 & 0 & 0 & (X^{-1})^T & 0 & 0 \\
0 & C_+ & 0 & 0 & D_+ & 0 \\
0 & 0 & C_- & 0 & 0 & D_-
\end{pmatrix}.
\]
Commuting with $J$ imposes conditions on each $2 \times 2$ block system formed by indices $(k, k+3)$:
\begin{itemize}
    \item The matrix $\left(\begin{smallmatrix} X & 0 \\ 0 & (X^{-1})^T \end{smallmatrix}\right)$ must commute with $\left(\begin{smallmatrix} 0 & -\Id \\ \Id & 0 \end{smallmatrix}\right)$.
    \[
    \begin{pmatrix} 0 & -X \\ (X^{-1})^T & 0 \end{pmatrix} = \begin{pmatrix} 0 & -(X^{-1})^T \\ X & 0 \end{pmatrix} \implies X = (X^{-1})^T \implies XX^T = \Id.
    \]
    Thus $X \in \O(n_0)$.
    \item The matrix $s_+ = \left(\begin{smallmatrix} A_+ & B_+ \\ C_+ & D_+ \end{smallmatrix}\right)$ must commute with $J_+ = \left(\begin{smallmatrix} 0 & -\Id \\ \Id & 0 \end{smallmatrix}\right)$. This implies $s_+$ is complex-linear ($A_+=D_+, B_+=-C_+$). Since $s_+$ is already symplectic, this identifies it as a unitary matrix.
    Thus $s_+ \in \U(n_+)$. Similarly, $s_- \in \U(n_-)$.
\end{itemize}

Combining these results, the stabilizer is isomorphic to the product of these groups:
\[
\U(V, J) \cap \Sp_W(V) \cong \O(n_0) \times \U(n_+) \times \U(n_-). 
\]
\end{proof}

At the linear algebra level choosing $J$ fixes the splitting. At the Lie group level choosing $J$ ignores the unipotent radical and fixes the Levi factor completely.

\section{Morse-Bott theory on the real Grassmannian} \label{sec:morse_bott_theory}

\subsection{Differential geometry of the real Grassmannian} \label{subsec:diffgeo_grassmannian}

We fix an identification $(V, \omega, J) \cong \R^{2n}$. Denote the inner product $g(u, v):= \omega(Ju, v)$ and norm $|u|^2 = g(u, u)$. We identify the real Grassmannian $\Gr(k; V)$ with its image under the embedding $\Gr(k; V) \hookrightarrow \Mat_{2n \times 2n}(\R)$ mapping a linear subspace $W$ to its orthogonal projection matrix $P_W$. The ambient space $\Mat_{2n\times 2n}(\R)$ has an $\O(2n)$-invariant trace inner product $\langle A, B \rangle := \Tr(AB^T)$, and $\Gr(k; V)$ becomes a Riemannian manifold endowed with the restricted trace inner product. We denote the norm $\lVert A \rVert^2:= \langle A, A\rangle$ and the matrix commutator by $[A, B] = \ad_{A}(B)$.

Since $P_W$ is the orthogonal projection matrix it satisfies $P_W^2 = P_W$ and $P_W^T = P_W$. Thus $\Gr(k; V)$ lies inside the linear subspace of symmetric matrices 
\[ \Sigma:= \Mat_{2n \times 2n}(\R)^T \subseteq \Mat_{2n \times 2n}(\R).\] In fact there is a decomposition into disjoint connected components
\[ \{ P \in \Sigma : P^2 = P \} = \bigsqcup_{k = 0}^{2n} \Gr(k; V).\]
Thus we will denote elements in $\Gr(k; V)$ by $W$, $P$, or $P_W$, depending on whether we are viewing them as linear subspaces of $V$, as points in the manifold, or as projection operators associated to linear subspaces.

Consider a curve of projection matrices $P(t)$. Differentiating the identity $P(t)^2 = P(t)$ at $t = 0$ we obtain
\[ P \dot{P}(0) + \dot{P}(0) P = \dot{P}(0).\]
Thus we will identify the tangent space at $P$ as the kernel of the matrix anticommutator with the reflection matrix $\{2P-\Id, \cdot\}$:
\[ T_P \Gr(k; V) \cong \{ X \in \Sigma : \{2P-\Id, X\} = 0\}.\]
In the eigenbasis of $P$ we can also identify $\Hom_\R(W, W^\perp) \cong T_P\Gr(k; V)$ by
\[ Y \mapsto \begin{pmatrix} 0 & Y^T\\ Y & 0 \end{pmatrix}.\]

In particular, we will treat tangent vectors of $\Gr(k; V)$ as \emph{symmetric matrices}.
We identify the linear subspace of $2n \times 2n$ skew-symmetric matrices with the Lie algebra $\so(V)$. The matrix
\[ J:= \begin{pmatrix} 0 & -\Id_{n} \\  \Id_n & 0 \end{pmatrix}\]
is both orthogonal and skew-symmetric, so it can be viewed both as an element of $\O(2n)$ and $\so(V)$. We will abuse notation and denote both by $J$.

We review the geometric meaning of various commutators.
\begin{lemma}\label{lem:commutators}\
    \begin{enumerate}
        \item There exists a short exact sequence
        \[ 0 \to \so(W) \times \so(W^\perp) \to \so(V) \xrightarrow{-\ad_{P_W}} T_{W}\Gr(k; V) \to 0.\]
        Moreover, for any $\xi \in \so(V)$ its fundamental vector field $X_\xi \in \Gamma (T\Gr(k; V))$ is given by
        \[ (X_\xi)_P:= -[P, \xi].\]
        \item Let $\psi_\cdot : \so(V) \to \End (\Gamma(T\Sigma|_{\Gr(k; V)}))$ be defined by
        \[ \psi_\xi(X)_P:= [\widetilde{\xi}_P, X_P]\]
        where $\widetilde{\xi}$ is the constant section of the trivial vector bundle $\Gr(k; V) \times \so(V)$ with value $\xi$. Then $\psi$ is a Lie algebra representation of $\so(V)$.
         \item The orthogonal projection $\pr_P: T_P\Sigma\twoheadrightarrow T_P\Gr(k; V)$ is given by
        \[ \pr_P(A) = \big[P, [P, A]\big].\]
        \item The shape operator $\A_\cdot: T_P\Sigma|_{\Gr(k; V)} \to \End(T_P\Gr(k; V))$ is given by
        \[ \A_X (Y)_P = \big[P, [Y, X]\big] \quad X \in T_P\Sigma|_{\Gr(k; V)}, Y \in T_P\Gr(k; V).\]
        If $X \in T_P \Gr(k; V) \subseteq T_P\Sigma|_{\Gr(k; V)}$ then $\A_X(Y)_P = 0$.
       
    \end{enumerate}
\end{lemma}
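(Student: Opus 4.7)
The lemma splits into four items; the first two exploit the conjugation action of $\O(V)$ on $\Sigma$, while the last two reduce to explicit block-matrix computations in an orthonormal eigenbasis of $P_W$ where $P_W = \diag(\Id_k, 0)$.

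For item (1), the group $\O(V)$ acts smoothly on $\Sigma$ by $g \cdot A = g A g^T$ and preserves $\Gr(k;V)$ since conjugation sends orthogonal projections to orthogonal projections. Differentiating the curve $t \mapsto e^{t\xi} P_W e^{-t\xi}$ at $t = 0$ yields $[\xi, P_W] = -[P_W, \xi]$, which is the fundamental vector field. Transitivity of the $\O(V)$-action on $\Gr(k;V)$ is classical, and the stabilizer of $P_W$ consists of orthogonal maps preserving both $W$ and $W^\perp$, namely $\O(W) \times \O(W^\perp)$, whose Lie algebra $\so(W) \times \so(W^\perp)$ is exactly the kernel of $-\ad_{P_W}$; this produces the short exact sequence. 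Item (2) is immediate from the Jacobi identity applied pointwise: since $\widetilde{\xi}$ is a constant section, $\psi_\xi(X)_P = [\xi, X_P]$, and $[[\xi, \eta], X_P] = [\xi, [\eta, X_P]] - [\eta, [\xi, X_P]]$ gives $\psi_{[\xi, \eta]} = [\psi_\xi, \psi_\eta]$. A one-line transpose check that $[\xi, X_P]$ remains symmetric whenever $\xi$ is skew and $X_P$ is symmetric ensures $\psi_\xi$ preserves $\Gamma(T\Sigma|_{\Gr(k;V)})$.

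For item (3), write any symmetric matrix in block form relative to $V = W \oplus W^\perp$ as $A = \left(\begin{smallmatrix} A_{11} & A_{21}^T \\ A_{21} & A_{22}\end{smallmatrix}\right)$. The tangent space $T_P\Gr(k;V)$ consists of the off-diagonal blocks, and its orthogonal complement in $\Sigma$ consists of the block-diagonal symmetric matrices. A two-step computation yields $[P, A] = \left(\begin{smallmatrix} 0 & A_{21}^T \\ -A_{21} & 0\end{smallmatrix}\right)$ (which is skew) and $[P, [P, A]] = \left(\begin{smallmatrix} 0 & A_{21}^T \\ A_{21} & 0\end{smallmatrix}\right)$, which is precisely the off-diagonal (tangential) component of $A$.

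For item (4), extend $X$ to the constant vector field $\widetilde{X}$ on $\Sigma$; by item (3), the assignment $Q \mapsto \pr_Q(X) = [Q, [Q, X]]$ defines a local tangent vector field along $\Gr(k;V)$ equal to $\pr_P(X)$ at $P$. Since $D_Y \widetilde{X} = 0$, the Gauss--Weingarten decomposition forces $\A_{X - \pr_P(X)}(Y) = \nabla_Y(\pr(X))$, the tangential part of the ambient derivative $D_Y[Q, [Q, X]]|_P = [Y, [P, X]] + [P, [Y, X]]$. A direct block calculation shows that $[Y, [P, X]]$ is block-diagonal (hence normal) while $[P, [Y, X]]$ is already off-diagonal symmetric (hence tangential), so the tangential projection yields exactly $[P, [Y, X]]$. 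The vanishing $\A_X(Y) = 0$ for tangent $X$ then follows from the same block picture: when $X$ and $Y$ are both off-diagonal symmetric, $[Y, X]$ is block-diagonal and commutes with $P$. The main obstacle is in this last item: choosing the correct extensions ($X$ constant in $\Sigma$, $Y$ via the moving projection) so that the Gauss--Weingarten identity cleanly produces $[P, [Y, X]]$ without leaving extra terms requiring further reduction; an alternative extension of $X$ as a normal field near $P$ yields a messier expression that must then be simplified back to the same bracket.
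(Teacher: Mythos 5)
Your proof is correct throughout. Items (1) and (2) follow the paper's argument exactly: differentiate the conjugation action $e^{t\xi}P e^{-t\xi}$, identify the kernel with $\so(W)\times\so(W^\perp)$, and invoke the Jacobi identity. For item (3) you reach the same conclusion by explicit block matrices relative to $W\oplus W^\perp$, whereas the paper verifies abstractly via the anticommutator $\{P,\pr_P(A)\}=\pr_P(A)$ and a kernel characterization; the two presentations are interchangeable.

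The only substantive divergence is item (4). The paper extends $X$ to a \emph{normal} section $X(t)$ along $P(t)$, characterizes normality by $[P(t),X(t)]=0$, differentiates that constraint to get $[P,\dot X]=-[Y,X]$ in one line, and projects. You extend $X$ \emph{constantly} on $\Sigma$, apply the Gauss--Weingarten decomposition to obtain $\A_{X^\perp}(Y)=\nabla_Y(\pr(X))$, compute the ambient derivative $D_Y[Q,[Q,X]]\big|_P=[Y,[P,X]]+[P,[Y,X]]$, and discard the first summand as normal. Both extensions cleanly yield $[P,[Y,X]]$; your closing remark that the normal-field extension ``yields a messier expression'' isn't borne out --- the paper's route is if anything the shorter of the two, since the constraint $[P,X]=0$ differentiates immediately into the desired bracket. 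Your block-matrix verification that $[Y,[P,X]]$ is block-diagonal (normal) and $[P,[Y,X]]$ off-diagonal symmetric (tangential) is correct, and the vanishing for tangent $X$ reduces, as in the paper, to $[Y,X]$ commuting with $P$ (equivalently with $S=2P-\Id$).
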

\begin{proof}\
    \begin{enumerate}
        \item
The fundamental vector field is obtained by differentiating the conjugation action $P(t) = e^{t\xi} P e^{-t\xi}$ at $t=0$, yielding $X_\xi(P) = \xi P - P \xi = -[P, \xi]$.
Since $\O(V)$ acts transitively on $\Gr(k; V)$, the map $\xi \mapsto X_\xi(P)$ is surjective onto the tangent space. The kernel consists of skew-symmetric matrices commuting with $P$; these are precisely the generators that preserve the eigenspaces $W$ and $W^\perp$, identifying the kernel with $\so(W) \times \so(W^\perp)$.

\item 
The map $\psi_\xi$ acts by the adjoint action $\ad_\xi$. The statement that $\ad$ is a Lie algebra homomorphism is equivalent to the Jacobi identity. Explicitly:
\[ [\psi_\xi, \psi_\eta](X) = [\xi, [\eta, X]] - [\eta, [\xi, X]] = [[\xi, \eta], X] = \psi_{[\xi, \eta]}(X). \]
\item Let $\pr_P(A) = [P, [P, A]]$. Expanding the double commutator yields $\pr_P(A) = PA + AP - 2PAP$.
We first verify the image lies in the tangent space (where $\{P, X\} = X$). Computing the anticommutator:
\begin{align*}
    \{P, \pr_P(A)\} &= P(PA + AP - 2PAP) + (PA + AP - 2PAP)P \\
    &= (PA - PAP) + (AP - PAP) \\
    &= PA + AP - 2PAP = \pr_P(A).
\end{align*}
Next, we characterize the kernel. If $\pr_P(A) = 0$, then $PA + AP = 2PAP$. Multiplying by $P$ on the left gives $PA = PAP$, and on the right gives $AP = PAP$. Thus $PA = AP$, so $[P, A] = 0$.
The kernel consists of matrices commuting with $P$, while the tangent space consists matrices anticommuting with $2P-\Id$). These spaces are orthogonal under the trace inner product, proving $\pr_P$ is the orthogonal projection.
\item 
Let $P(t)$ be a curve in $\Gr(k; V)$ with $P(0)=P$ and $\dot{P}(0)=Y$. Let $X(t)$ be a section of the ambient tangent bundle $T\Sigma|_{\Gr(k; V)}$ along $P(t)$. The shape operator is $\A_X(Y) = -\pr_P(\dot{X}(0))$.

If $X(t)$ is contained in the normal space $N_{P(t)}\Gr(k; V)$ for all $t \in (-\epsilon, \epsilon)$, which is equivalent to $[P(t), X(t)] = 0$. Differentiating at $t=0$:
\[ [\dot{P}, X] + [P, \dot{X}] = [Y, X] + [P, \dot{X}] = 0 \implies [P, \dot{X}] = -[Y, X]. \]
Applying the projection $\pr_P(Z) = [P, [P, Z]]$ to $\dot{X}$:
\[ \pr_P(\dot{X}) = \big[P, [P, \dot{X}]\big] = \big[P, -[Y, X]\big] = -\big[P, [Y, X]\big]. \]
Thus, $\A_X(Y) = -\pr_P(\dot{X}) = \big[P, [Y, X]\big]$.

If $X$ is a tangent vector, let $S = 2P - \Id$. A vector $Z$ is tangent if and only if it anticommutes with $S$ (i.e., $SZ = -ZS$).
We check the action of $S$ on the commutator $\xi = [Y, X]$:
\[ S[Y, X] = [Y, X]S. \]
Since $S$ commutes with $[Y, X]$, $P = \frac{1}{2}(S+\Id)$ also commutes with $[Y, X]$.
Thus, $\A_X(Y) = [P, [Y, X]] = 0$.
    \end{enumerate}
\end{proof}

\subsection{The energy function and the K\"{a}hler angle}\label{subsec:energy_function}

\begin{definition} \label{def:energy}
Let $X_J \in \Gamma(T\Gr(k; V))$ be the vector field defined by
\[ (X_J)_P:= [P, J] = \ad_{P}(J).\]

Let the \textbf{energy function} $f: \Gr(k; V) \to \R$ be defined as follows:
\[
f(P) := \frac{1}{2} \lVert (X_J)_P \rVert^2 = \frac{1}{2} \Tr\left( [P, J] [P, J]^T \right) = \frac{1}{2} \Tr([P, J]^2)
\]
where for $W \in \Gr(k; V)$, $P = P_W$ is the orthogonal projection  operator to $W$.
\end{definition}

\begin{proposition}[Energy decomposition] \label{prop:energy_decomposition} For any $W \in \Gr(\vec{n}; V)$, the energy $f(W)$ is additive with respect to the decomposition in Corollary \ref{cor:kahler_decomp}:
  \[ f(W) = f(W_0) + \sum_{j = 0}^{n_+^\theta} f_j(W_j^\theta) = n_0 + \sum_{j = 1}^{n_+^\theta} 2(\sin \theta_j)^2\]
where $f_j$ is the energy function on $\Gr(2; V_j^\theta)$ and $\theta_{j}$ is the K\"{a}hler angle of $W_j^\theta \subseteq V_j^\theta$.
\end{proposition}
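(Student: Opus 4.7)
The plan is to exploit the orthogonal, $J$-invariant decomposition $V = V^0 \oplus V^J \oplus V^\theta$ from Proposition~\ref{prop:kahler_splittings} and its refinement $V^\theta = \bigoplus_{j=1}^{n_+^\theta} V_j^\theta$ from Corollary~\ref{cor:kahler_decomp} to split $[P_W, J]$ into mutually orthogonal blocks, then evaluate each block by a model calculation. Since each summand $V^\bullet$ (and each $V_j^\theta$) is both $J$-invariant and orthogonal, $J$ is block-diagonal in a basis adapted to the splitting; and since $W = W_0 \oplus W_+^J \oplus W_+^\theta$ with $W_+^\theta = \bigoplus_j W_j^\theta$ and each piece contained orthogonally in the matching summand, the projection $P_W$ is also block-diagonal. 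Hence $[P_W, J]$ is block-diagonal, and since $[P_W,J]$ is symmetric its squared Frobenius norm $\Tr([P_W,J]^2)$ splits as a sum of contributions from each block.

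Three model computations then suffice. First, the $J$-invariant block $W_+^J$ contributes zero: $W_+^J$ is complex, so $P_{W_+^J}$ commutes with $J|_{V^J}$. Second, for the Lagrangian block I would choose an orthonormal basis of $W_0$ and extend by $J$ to an orthonormal basis of $V^0 = W_0 \oplus JW_0$. In this basis $P_{W_0}$ and $J|_{V^0}$ take the standard forms $\bigl(\begin{smallmatrix} \Id & 0 \\ 0 & 0 \end{smallmatrix}\bigr)$ and $\bigl(\begin{smallmatrix} 0 & -\Id \\ \Id & 0 \end{smallmatrix}\bigr)$, and a direct block computation yields $[P_{W_0}, J|_{V^0}]^2 = \Id_{V^0}$, contributing $f(W_0) = n_0$.

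The substantive step is the Kähler-angle model: a $2$-dimensional totally real symplectic $W_j^\theta \subseteq V_j^\theta$ of Kähler angle $\theta_j$ should contribute $2\sin^2\theta_j$. Taking an orthonormal basis $\{u,v\}$ of $W_j^\theta$, I would write $P_{W_j^\theta} = uu^T + vv^T$ and expand
\[ [P_{W_j^\theta}, J|_{V_j^\theta}] = -u(Ju)^T - (Ju)u^T - v(Jv)^T - (Jv)v^T \]
using that $J$ is skew-adjoint with respect to $g$. Applying the rank-one trace identity $\Tr((ab^T)(cd^T)) = g(a,d)\,g(b,c)$ and the $\omega$-compatibility identities $g(Ju,v) = \omega(u,v)$, $g(u,Jv) = -\omega(u,v)$, $g(Ju,Ju) = g(Jv,Jv) = 1$, with all other pairings vanishing, a short calculation yields $\tfrac{1}{2}\Tr([P_{W_j^\theta}, J|_{V_j^\theta}]^2) = 2 - 2\omega(u,v)^2$. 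Since $|\omega(u,v)| = \cos\theta_j$ by Definition~\ref{def:kahler_angle}, this equals $2\sin^2\theta_j$; summing over all blocks gives the stated identity. The main obstacle is this final dyadic trace computation, which is routine but sign-sensitive in its use of $g$-orthogonality and $J$-compatibility; the block-diagonal reduction itself is straightforward bookkeeping.
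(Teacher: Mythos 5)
Your proposal is correct and follows essentially the same route as the paper: the same orthogonal, $J$-invariant and $P$-invariant decomposition $V = V^0 \oplus V^J \oplus \bigoplus_j V_j^\theta$ from Proposition~\ref{prop:kahler_splittings} and Corollary~\ref{cor:kahler_decomp}, the same block-diagonal splitting of $[P_W, J]$, and the same model computations on the Lagrangian and complex blocks. The one place you deviate is the $4$-dimensional Kähler-angle block: where the paper evaluates $f(P|_{V_j^\theta}) = \Tr(P|_{V_j^\theta}) + \Tr(PJPJ|_{V_j^\theta})$ via the operator $S = -(PJ|_W)^2$ introduced in Theorem~\ref{thm:totally_real_darboux} (whose eigenvalue is $\cos^2\theta_j$), you instead expand $P_{W_j^\theta} = uu^T + vv^T$ dyadically and reduce $\Tr([P,J]^2)$ to sums of rank-one traces $g(a,d)\,g(b,c)$, landing on $2 - 2\omega(u,v)^2$ directly from Definition~\ref{def:kahler_angle}. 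Both are valid and give the same answer; yours is slightly more self-contained (it doesn't lean on the spectral analysis of $S$ from the earlier proof), at the cost of a somewhat longer and more sign-sensitive bookkeeping pass through the sixteen cross terms.
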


\begin{proof}
Let $W \in \Gr(\vec{n}; V)$. By Corollary \ref{cor:kahler_decomp} and Proposition \ref{prop:kahler_splittings}, the vector space $V$ admits an orthogonal decomposition into symplectic subspaces that are invariant under both the complex structure $J$ and the orthogonal projection $P = P_W$:
\[ V = V^0 \oplus V^J \oplus \bigoplus_{j=1}^{n_+^\theta} V_j^\theta, \]
where $V^0 = W_0 \oplus J W_0$ contains the isotropic kernel, $V^J$ contains the complex part $W_+^J$, and each $V_j^\theta$ is a $4$-dimensional subspace containing a symplectic plane $W_j^\theta$ with K\"{a}hler angle $\theta_j \in (0, \pi/2)$.

Since the decomposition is orthogonal and invariant under $P$ and $J$, the commutator $[P, J]$ is block-diagonal. The energy $f(P) = \frac{1}{2} \Tr([P, J]^2)$ splits as a sum of energies on each block. We compute the contribution of each component using the identity $f(P|_U) = \Tr(P|_U) + \Tr(P|_U J|_U P|_U J|_U)$:

\begin{enumerate}
    \item Restricted to $V^0$, the isotropic kernel $W_0$ is a Lagrangian subspace. In a standard basis adapted to $V^0 \cong \R^{2n_0}$, $P|_{V^0} = \diag(\Id_{n_0}, 0)$ and $J|_{V^0}$ is the standard symplectic matrix. A direct computation shows $[P|_{V^0}, J|_{V^0}]^2 = \Id_{2n_0}$.
    \[ f(P|_{V^0}) = \frac{1}{2} \Tr(\Id_{2n_0}) = n_0. \]

    \item Restricted to $V^J$, the subspace $W \cap V^J$ is $J$-invariant. Thus $P$ and $J$ commute, $[P, J] = 0$, and the energy contribution is $0$.

    \item On each block $V_j^\theta$, let $W_j^\theta = W \cap V_j^\theta$. Since $\dim W_j^\theta = 2$, we have $\Tr(P|_{V_j^\theta}) = 2$.
    Recall from the proof of Theorem \ref{thm:totally_real_darboux} that the K\"{a}hler angle $\theta_j$ is defined via the operator $S = -(P J |_W)^2$, whose eigenvalues on $W_j^\theta$ are $(\cos \theta_j)^2$. Since $P J P J |_{W_j^\theta} = -S$, this operator acts as $-(\cos\theta_j)^2 \Id$. Thus,
    \[ \Tr(P J P J |_{V_j^\theta}) = \Tr(P J P J |_{W_j^\theta}) = -2 (\cos \theta_j)^2. \]
    Using the trace expansion of the energy:
    \begin{align*}
        f_j(W_j^\theta) &= \Tr(P|_{V_j^\theta}) + \Tr(PJPJ|_{V_j^\theta}) \\
        &= 2 - 2 (\cos \theta_j)^2 \\
        &= 2( \sin \theta_j)^2.
    \end{align*}
\end{enumerate}

Summing these contributions yields the result:
\[ f(W) = n_0 + \sum_{j=1}^{n_+^\theta} 2 (\sin \theta_j)^2.  \]
\end{proof}

\begin{lemma}[Energy Bounds]
\label{lem:energy_bounds}
For all $P \in \Gr(\vec{n}; V) \subseteq \Gr(k; V)$,
\[ n_0 \le f(P) \le \min(k, 2n-k).\]
The upper bound is strict if $n_+ > \max(0, k-n)$.
\end{lemma}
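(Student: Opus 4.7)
The strategy is to exploit the additive decomposition of the energy supplied by Proposition \ref{prop:energy_decomposition}, which writes
\[ f(W) = n_0 + \sum_{j=1}^{n_+^\theta} 2(\sin \theta_j)^2, \qquad \theta_j \in (0, \pi/2). \]
The lower bound $f(W) \ge n_0$ is then immediate, as each summand is nonnegative.

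For the upper bound, I would bound $\sin^2\theta_j \le 1$ to obtain $f(W) \le n_0 + 2 n_+^\theta$, and then compare with the dimension formulas $k = n_0 + 2 n_+^J + 2 n_+^\theta$ and $2n - k = n_0 + 2 n_-^J + 2 n_-^\theta$. Combined with the identity $n_+^\theta = n_-^\theta$ from Corollary \ref{cor:dim_constraints}, each of these yields $n_0 + 2 n_+^\theta \le k$ and $n_0 + 2 n_+^\theta \le 2n - k$, hence $f(W) \le n_0 + 2 n_+^\theta \le \min(k, 2n - k)$.

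For the strictness claim, I would first note that $n_+ - n_- = k - n$, so the hypothesis $n_+ > \max(0, k - n)$ is equivalent to the conjunction $n_+ > 0$ and $n_- > 0$. Two cases then arise. If $n_+^\theta > 0$, some K\"{a}hler angle $\theta_j$ lies strictly in the open interval $(0, \pi/2)$, so $2(\sin \theta_j)^2 < 2$ and the first inequality is strict: $f(W) < n_0 + 2 n_+^\theta \le \min(k, 2n-k)$. If instead $n_+^\theta = 0$, then $n_\pm = n_\pm^J$ are both positive, and the comparison with the dimension formulas becomes strict at the second step: $\min(k, 2n-k) = n_0 + 2 \min(n_+^J, n_-^J) > n_0 = f(W)$.

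There is no real obstacle here: once the K\"{a}hler angle decomposition of the energy is available, the argument is purely bookkeeping with the type invariants. The only mild subtlety is the case split in the strictness argument, which must distinguish whether the totally real part $n_+^\theta$ vanishes: if it does, strictness comes from having nontrivial $J$-invariant symplectic pieces on both sides of the decomposition; if not, strictness comes from the K\"{a}hler angles themselves being bounded away from $\pi/2$.
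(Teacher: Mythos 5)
Your proof is correct, and it follows the same core strategy as the paper: invoke Proposition~\ref{prop:energy_decomposition} to reduce the bound to combinatorics in the type invariants. There are two small but genuine differences worth noting. First, to show $n_0 + 2n_+^\theta \le \min(k, 2n-k)$, you use the pair of dimension formulas for $W$ and $W^\omega$ together with the symmetry $n_+^\theta = n_-^\theta$ from Corollary~\ref{cor:dim_constraints}, whereas the paper instead invokes Proposition~\ref{prop:minimum_complex} to bound $n_+^J \ge \max(0, k-n)$ and then works only with the formula for $W$. The two routes are equivalent in content, but yours is arguably more symmetric and makes the ``reason'' for the $\min$ visible. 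Second, and more substantively, your strictness argument is more careful than the paper's. The paper writes the chain $f(P) < n_0 + 2n_+^\theta = k - 2n_+^J \le \min(k, 2n-k)$, which silently assumes $n_+^\theta > 0$: when $n_+^\theta = 0$ the sum in Proposition~\ref{prop:energy_decomposition} is empty and the first inequality degenerates to equality, so the paper's displayed chain does not apply. Your two-case argument fills this in: when $n_+^\theta = 0$ you correctly relocate the strict inequality to the second comparison, using that $n_\pm^J = n_\pm$ are both positive under the hypothesis $n_+ > \max(0, k-n)$. Your reformulation of that hypothesis as ``$n_+ > 0$ and $n_- > 0$'' via $n_+ - n_- = k - n$ is also a nice clarification not spelled out in the paper.
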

\begin{proof}
For the upper bound, consider the energy decomposition on a stratum with $n_+^\theta$ generic pairs:
\[ f(P) = n_0 + \sum_{j=1}^{n_+^\theta} 2 (\sin \theta_j)^2. \]
Since the $0 < \theta_j < \pi/2$, we have the strict inequality $(\sin \theta_j)^2 < 1$. Thus:
\[ f(P) < n_0 + 2n_+^\theta. \]
Using the dimension constraint $k = n_0 + 2n_+^J + 2n_+^\theta$, we substitute $n_0 + 2n_+^\theta = k - 2n_+^J$:
\[ f(P) < k - 2n_+^J. \]
Recall the dimensional constraint that any $k$-dimensional subspace must contain at least $\max(0, k-n)$ complex pairs (Proposition \ref{prop:minimum_complex}).
\begin{itemize}
    \item If $k \le n$, then $n_+^J \ge 0$, so $k - 2n_+^J \le k$.
    \item If $k > n$, then $n_+^J \ge k-n$, so $k - 2n_+^J \le k - 2(k-n) = 2n - k$.
\end{itemize}
Combining these, we obtain the bound:
\[ f(P) < k - 2n_+^J \le \min(k, 2n-k). \]
\end{proof}

\subsection{Critical submanifolds and extrema} \label{subsec:critical}

\begin{lemma}\label{lem:derivatives} \
Let $f: \Gr(k; V) \to \R$ be the energy function defined in Definition \ref{def:energy}.
\begin{enumerate}
    \item The differential of $f$ evaluated at $Y$ is the trace pairing with a double commutator:
    \[ (df)_P(Y) = \langle \psi_J(X_J)_P, Y\rangle =   \Tr\left( \big[ J, [P, J]\big] Y \right). \]
    \item The gradient vector field of $f$ is given by:
    \[ (\grad f)_P = \pr_P (\psi_J(X_J)_P) = \big[ P, [P, [J, [P, J]]] \big]. \]
    \item At any critical point $P \in \Crit(f)$, the Hessian operator $\Hess(f)_P:T_P\Gr(k; V) 
    \to T_P\Gr(k; V)$ is given by:
    \begin{align*}\Hess(f)_P(Y) &= -\pr_P (\psi_J^2(Y)) + \A_{\psi_J(X_J)_P}(Y) \\&= \bigg[P, \big[P, \big [J, [Y, J]\big]\big]\bigg] + \bigg[P, \big[Y, \big[J, [P, J]\big] \big] \bigg].\end{align*}
\end{enumerate}
\end{lemma}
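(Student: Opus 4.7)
The plan is to derive the three formulas in order, using the fact that $\Gr(k;V)$ sits as a Riemannian submanifold inside the flat Euclidean space $\Sigma$ of symmetric matrices with the trace inner product. For part (1), I would differentiate $f(P) = \tfrac{1}{2}\Tr([P,J]^2)$ directly: the Leibniz rule and cyclicity of the trace give $(df)_P(Y) = \Tr([Y,J]\,[P,J])$, and the elementary identity $\Tr([A,B]C) = \Tr(A[B,C])$ rewrites this as $\Tr(Y\,[J,[P,J]]) = \langle [J,[P,J]], Y\rangle$, using that $Y$ and $[J,[P,J]]$ are both symmetric (a short check, since $J$ is skew and $[P,J]$ is symmetric). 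Recognising $[J,[P,J]] = \psi_J(X_J)_P$ closes (1) and simultaneously identifies the ambient gradient of $f$ (viewed as a function on $\Sigma$) as $\psi_J(X_J)_P$. For part (2), the Riemannian gradient on the submanifold is the orthogonal projection of this ambient gradient onto $T_P\Gr(k;V)$, and Lemma \ref{lem:commutators}(3) provides the explicit formula $\pr_P(A) = [P,[P,A]]$, yielding the nested expression in the statement directly.

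For part (3), at a critical point $P \in \Crit(f)$ I would compute the Hessian via the Gauss-formula identity $\Hess(f)_P(Y) = \pr_P(D_Y(\grad f))$, where $D_Y$ is the flat derivative in $\Sigma$ along a curve in $\Gr(k;V)$ tangent to $Y$. Extend $\grad f$ off $\Gr(k;V)$ as the polynomial map $\Phi(P) := [P,[P,G(P)]]$ with $G(P) := [J,[P,J]]$; since $G$ is linear in $P$, applying Leibniz twice produces exactly three terms, namely $[Y,[P,G(P)]]$, $[P,[Y,G(P)]]$, and $[P,[P,[J,[Y,J]]]]$. The critical point condition $(\grad f)_P = 0$, combined with the kernel characterisation of $\pr_P$ from Lemma \ref{lem:commutators}(3), forces $[P,G(P)] = 0$, which annihilates the first term. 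The remaining two terms are already tangent to $\Gr(k;V)$: the second is the shape operator $\A_{\psi_J(X_J)_P}(Y)$ by Lemma \ref{lem:commutators}(4), and the third is $\pr_P([J,[Y,J]]) = -\pr_P(\psi_J^2(Y))$ using the sign flip $[J,[Y,J]] = -[J,[J,Y]]$.

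The main obstacle is the bookkeeping of nested commutators and the tracking of signs, particularly verifying that each candidate expression is symmetric (so that the trace pairing correctly represents the inner product) and that the Leibniz expansion is exhausted. The only conceptually nontrivial step is the Hessian computation: the key simplification at the critical point — that the term coming from differentiating the outer factor of $P$ in $\pr_P$ vanishes precisely because $G(P)$ commutes with $P$ — is what cleanly separates the Hessian into a projected ambient Hessian term plus a shape-operator correction, in the spirit of the Gauss formula for a submanifold of Euclidean space.
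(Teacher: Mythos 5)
Your proposal is correct and follows essentially the same route as the paper's proof: differentiate the trace directly and use cyclicity for (1), project the ambient gradient $[J,[P,J]]$ via $\pr_P$ for (2), and for (3) expand $D_Y\bigl([Q,[Q,G(Q)]]\bigr)$ by the Leibniz rule into three terms, observe that the critical-point condition $[P,G]=0$ annihilates the first, and identify the remaining two as the shape-operator term and the projected ambient Hessian. Your remarks about symmetry of $[J,[P,J]]$ and about the role of linearity of $G$ in $P$ are correct but are handled implicitly in the paper's version.
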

\begin{proof}\
\begin{enumerate}
 \item Let $P(t)$ be a smooth curve in $\Gr(k; V)$ with $P(0) = P$ and $\dot{P}(0) = Y$. Differentiating $f(P) = \frac{1}{2} \Tr([P, J]^2)$:
\begin{align*}
(df)_P(Y) &= \frac{1}{2} \Tr\left( [\dot{P}, J] [P, J] + [P, J] [\dot{P}, J] \right) \\
&= \Tr\left( [Y, J] [P, J] \right) \quad \text{(by cyclic invariance)}.
\end{align*}
Using the identity $\Tr([A, B]C) = \Tr(A [B, C])$, we rewrite the term $[Y, J] = -[J, Y]$:
\[
\Tr( [Y, J] [P, J] ) = \Tr( Y [J, [P, J]] ) = \Tr( [J, [P, J]] Y ).
\]
Identifying $[J, [P, J]]$ as $\psi_J(X_J)_P$, we obtain $(df)_P(Y) = \langle \psi_J(X_J)_P, Y \rangle$.

\item The gradient is the orthogonal projection of the ambient gradient $G = [J, [P, J]]$ onto the tangent space $T_P \Gr(k; V)$. Using the projection operator $\pr_P(A) = [P, [P, A]]$:
\[
(\grad f)_P = \pr_P(G) = \big[ P, [P, [J, [P, J]]] \big].
\]

\item Let $P$ be a critical point, so $(\grad f)_P = 0$, which implies $\pr_P(G) = 0$. Since $\pr_P$ is the orthogonal projection, this means the ambient gradient $G$ is normal to the submanifold.
The Hessian is the covariant derivative of the gradient field:
\[
\Hess(f)_P(Y) = \nabla_Y (\grad f) = \pr_P \left( D_Y (\grad f) \right).
\]
Differentiating the ambient vector field $(\grad f)_Q = \pr_Q(G(Q)) = [Q, [Q, G(Q)]]$ at $P$ in direction $Y$:
\[
D_Y (\grad f) = [Y, [P, G]] + [P, [Y, G]] + [P, [P, D_Y G]].
\]
At a critical point ($[P, G]=0$), the first term vanishes. We project the remaining terms:
\[
\Hess(f)_P(Y) = \pr_P \left( [P, [Y, G]] + [P, [P, D_Y G]] \right).
\]
Using the property that $\pr_P$ is idempotent ($\pr_P \circ \pr_P = \pr_P$) and that $[P, [Y, G]]$ is already a tangent vector (as shown in Lemma \ref{lem:commutators}, shape operator terms are tangent), the projection simplifies:
\[
\Hess(f)_P(Y) = [P, [Y, G]] + \pr_P( D_Y G ).
\]
Substituting $G = \psi_J(X_J)_P$ and $D_Y G = [J, [Y, J]] = -\psi_J^2(Y)$, we obtain
\[
\Hess(f)_P(Y) = -\pr_P (\psi_J^2(Y)) + \A_{\psi_J(X_J)_P}(Y). 
\]
\end{enumerate}
\end{proof}

\begin{proposition}[Critical Locus] \label{prop:critical_locus}
The critical locus of the energy function $f: \Gr(k; V) \to \R$ is  the union the Grassmannians of $J$-compatible linear subspaces:
\[ \Crit(f) = \bigsqcup_{\vec{n}: n_0 + 2n_+ = k} \Gr_J(\vec{n}; V).\]
\end{proposition}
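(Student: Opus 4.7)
The plan is to use the criterion from part 2 of Lemma \ref{lem:derivatives}: $(\grad f)_P = \pr_P(G)$, where $G := [J, [P, J]]$. Since $\pr_P$ is the orthogonal projection whose kernel is the space of symmetric matrices commuting with $P$, we have
\[ P \in \Crit(f) \iff [P, G] = 0 \iff W \text{ is } G\text{-invariant},\]
the last equivalence using that $G$ is self-adjoint. Both inclusions then reduce to analysis of $G$ via the orthogonal decomposition $V = V^0 \oplus V^J \oplus V^\theta$ of Proposition \ref{prop:kahler_splittings}, which is preserved by both $P$ and $J$ and hence by $G$.

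For the \emph{forward inclusion}, assume $W$ is $J$-compatible, so $V^\theta = 0$ and $V = V^0 \oplus V^J$. On $V^J = W_+^J \oplus W_-^J$, both summands are $J$- and $P$-invariant, hence $[P, J]|_{V^J} = 0$ and $G|_{V^J} = 0$. On $V^0 = W_0 \oplus JW_0$, write $P|_{V^0} = \tfrac{1}{2}(\Id + S)$, where $S$ is the reflection with $+1$-eigenspace $W_0$ and $-1$-eigenspace $JW_0$. Since $J$ swaps these summands $SJ = -JS$, from which $[P, J]|_{V^0} = \tfrac{1}{2}[S, J] = SJ$ and then $G|_{V^0} = J(SJ) - (SJ)J = S - (-S) = 2S$. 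In particular $G$ commutes with $P$, so $[P, G] = 0$.

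For the \emph{reverse inclusion}, suppose $P = P_W$ is critical and assume for contradiction that $V^\theta \ne 0$. By Corollary \ref{cor:kahler_decomp}, $V^\theta$ decomposes orthogonally into $4$-dimensional $J$-invariant symplectic blocks $V_j^\theta$, each preserved by $P$, with $W_j^\theta := W \cap V_j^\theta$ a totally real $2$-plane of K\"{a}hler angle $\theta_j \in (0, \pi/2)$. Choose an orthonormal Darboux basis $\{u_1, u_2, u_3, u_4\}$ of some such block with $Ju_1 = u_3$ and $Ju_2 = u_4$, in which $W_j^\theta = \Span_\R\{u_1,\ \sin\theta_j \cdot u_2 + \cos\theta_j \cdot u_3\}$. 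A direct computation of the $4 \times 4$ matrix $G|_{V_j^\theta}$ yields $Gu_1 = 2\sin^2\theta_j \cdot u_1 + \sin(2\theta_j) \cdot u_4$. Since $u_4$ is orthogonal to both basis vectors of $W_j^\theta$ and $\sin(2\theta_j) \ne 0$, we conclude $Gu_1 \notin W$, contradicting the $G$-invariance of $W$. Hence $V^\theta = 0$ and $W$ is $J$-compatible.

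The main obstacle is the explicit $4$-dimensional computation on $V_j^\theta$. The decomposition machinery of Section \ref{subsec:kahler_angles} reduces the problem to this block, and the residual obstruction $\sin(2\theta_j) \ne 0$ detects precisely the generic K\"{a}hler angle that distinguishes $V^\theta$ from $V^0 \oplus V^J$; this mirrors the additivity of $f$ in Proposition \ref{prop:energy_decomposition}, where the $2\sin^2\theta_j$ summand is exactly what fails to vanish under perturbations of $\theta_j$.
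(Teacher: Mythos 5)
Your proof is correct, but it takes a genuinely different route from the paper's. The paper works with the $2\times 2$ block decomposition of $V = W \oplus W^\perp$, writes $J = \left(\begin{smallmatrix} A & C \\ -C^T & B \end{smallmatrix}\right)$, uses the constraints $CC^T = \Id_W + A^2$ and $CB = -AC$ from $J^2 = -\Id$, and reads off that $(\grad f)_P = 0$ is equivalent to $AC = 0$; it then identifies $\ker A = W \cap W^\omega$ and shows $\Im A$ is complex. You instead reformulate criticality as $G$-invariance of $W$ for $G = [J,[P,J]]$ (valid since $G$ is symmetric and $\ker \pr_P$ consists of matrices commuting with $P$), and then analyze $G$ block-by-block using the orthogonal decomposition $V = V^0 \oplus V^J \oplus V^\theta$ of Proposition \ref{prop:kahler_splittings} and the $4$-dimensional blocks of Corollary \ref{cor:kahler_decomp}. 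Your forward inclusion computes $G|_{V^0} = 2S$ and $G|_{V^J} = 0$ cleanly, and the reverse inclusion is settled by the explicit relation $Gu_1 = 2\sin^2\theta_j\, u_1 + \sin(2\theta_j)\, u_4$, which I have verified. The paper's argument is shorter and self-contained, using only the splitting $W \oplus W^\perp$; yours leans on the K\"ahler angle machinery of Section \ref{subsec:kahler_angles} but is more geometric, parallels the additivity of $f$ in Proposition \ref{prop:energy_decomposition}, and makes the obstruction $\sin(2\theta_j) \neq 0$ explicit as the same quantity whose nonvanishing distinguishes $V^\theta$ from $V^0 \oplus V^J$. Both are valid; the one you wrote is closer in spirit to how the energy is actually decomposed later in the paper.
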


\begin{proof}
Let $P$ be the orthogonal projection onto $W$. We analyze the operators with respect to the orthogonal decomposition $V = W \oplus W^\perp$. In this block splitting, $P = \diag(\Id_W, 0)$. Since $J$ is skew-symmetric, we decompose it as:
\[
J = \begin{pmatrix} A & C \\ -C^T & B \end{pmatrix},
\]
where $A: W \to W$ and $B: W^\perp \to W^\perp$ are skew-symmetric operators, and $C: W^\perp \to W$ is a linear map.
The condition $J^2 = -\Id$ imposes structural constraints on these blocks. Computing the square explicitly:
\[
J^2 = \begin{pmatrix} A^2 - CC^T & AC + CB \\ -C^T A - B C^T & -C^T C + B^2 \end{pmatrix} = \begin{pmatrix} -\Id_W & 0 \\ 0 & -\Id_{W^\perp} \end{pmatrix}.
\]
This yields two identities:
\begin{equation} \label{eq:J2_identities}
C C^T = \Id_W + A^2 \quad \text{and} \quad CB = -AC.
\end{equation}

From Lemma \ref{lem:derivatives}, we compute
\[
(\grad f)_P = 2 \begin{pmatrix} 0 & AC \\ -C^T A & 0 \end{pmatrix}.
\]
Thus, $P$ is a critical point if and only if $AC = 0$.

We claim that if $AC = 0$, then $W$ is $J$-compatible. More specifically, since $A$ is a real skew-symmetric operator on $W$, it is normal and $W$ decomposes orthogonally into $W = \ker(A) \oplus \Im(A)$. We will show $\ker A = W \cap W^\omega$ and $\Im(A)$ is complex.
\begin{enumerate}
    \item Let $v \in \ker(A)$. Then $Av = P(Jv) = 0$, implying $Jv \perp W$, or equivalently $v \perp JW$. Thus $v \in W^\omega$. Hence $\ker(A) = W \cap W^\omega$.
    \item  For any $v \in \Im(A)$, we have $C^T v = 0$ (from the lower right block of $(\grad f)_P$). Substituting this into the identity $CC^T = \Id_W + A^2$:
    \[ 0 = (\Id_W + A^2)v \implies A^2 v = -v. \]
    Since $A = P J|_W$, the condition $A^2 = -\Id$ on $\Im(A)$ implies that $P$ preserves the norm of $Jv$ for all $v \in \Im(A)$, which forces $J(\Im(A)) \subseteq W$.
\end{enumerate}
Thus, $W$ splits orthogonally into an isotropic subspace $\ker(A)$ and a complex subspace $\Im(A)$, proving $W$ is $J$-compatible.
\end{proof}

\begin{proposition}[Critical values]
\label{prop:critical_values}
On each critical submanifold $\Gr_J(\vec{n}; V)$, the critical value of the energy function $f$ is $n_0$.
\end{proposition}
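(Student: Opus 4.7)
The plan is to deduce this directly from the energy decomposition of Proposition \ref{prop:energy_decomposition}. The key point is that on a $J$-compatible subspace the Kähler-angle part of the decomposition is empty, so only the isotropic-kernel contribution $n_0$ survives.

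First I would verify that the direct sum $W = W_0 \oplus W_+^J$ appearing in the definition of $J$-compatibility is automatically $g$-orthogonal. For $w_0 \in W_0$ and $w \in W_+^J = W \cap JW$, the vector $Jw$ lies in $W$ because $W_+^J$ is $J$-invariant; using $g(\cdot,\cdot) = \omega(\cdot, J\cdot)$ and $w_0 \in W^\omega$, we get $g(w_0, w) = \omega(w_0, Jw) = 0$. Consequently $W_+ := W_0^\perp \cap W$ coincides with $W_+^J$, and therefore the totally real symplectic part $W_+^\theta = (W_+^J)^\perp \cap W_+$ is trivial. In particular $n_+^\theta = 0$.

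With $n_+^\theta = 0$, Proposition \ref{prop:energy_decomposition} gives $f(W) = n_0 + \sum_{j=1}^{0} 2(\sin\theta_j)^2 = n_0$, since the Kähler-angle sum is empty. I would mention as a sanity check that the same identity can be obtained by direct computation along the lines of the proof of Proposition \ref{prop:critical_locus}: writing $P = \operatorname{diag}(\Id_W, 0)$ and $J = \left(\begin{smallmatrix} A & C \\ -C^T & B \end{smallmatrix}\right)$ relative to $V = W \oplus W^\perp$, one finds $[P,J]^2 = \operatorname{diag}(CC^T, C^TC)$ and hence $f(P) = \Tr(CC^T) = k + \Tr(A^2)$, and at a $J$-compatible $P$ the operator $A = PJ|_W$ satisfies $A = 0$ on $W_0$ and $A^2 = -\Id$ on $\Im(A)$, so $\Tr(A^2) = -(k - n_0)$ and $f(P) = n_0$. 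There is no real obstacle in either route; the only substantive point is checking orthogonality of the $J$-compatible splitting, which is what makes the $W_+^\theta$-summand drop out.
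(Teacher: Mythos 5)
Your proof is correct and takes the same route as the paper, which simply cites Proposition \ref{prop:energy_decomposition}; you fill in the one substantive detail the paper leaves implicit, namely that $J$-compatibility forces $n_+^\theta = 0$ so the K\"ahler-angle sum is empty. A slightly quicker way to see that $W_+^\theta = \{0\}$ is by dimension count: since $W = W_0 \oplus W_+^J \oplus W_+^\theta$ is always an orthogonal decomposition (Section \ref{subsec:kahler_angles}), comparing dimensions with the $J$-compatible direct sum $W = W_0 \oplus W_+^J$ gives $\dim W_+^\theta = 0$ immediately, without separately checking orthogonality of the summands.
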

\begin{proof}
This follows from Proposition \ref{prop:energy_decomposition}.
\end{proof}

\begin{corollary}[Extrema]\
\begin{enumerate}
\item If $k \ge n$, the energy function attains its maximum value of $2n-k$ on the Grassmannians of coisotropic linear subspaces. If $k \le n$, the energy function attains its maximum value of $k$ on the Grassmannian of isotropic linear subspaces.
\item If $k$ is even, the energy function attains its minimum value of $0$ on the Grassmannian of $J$-compatible symplectic subspaces (which can be identified with the ordinary complex Grassmannian). If $k$ is odd, the energy function attains its minimum value of $1$ on the Grassmannian of $J$-compatible linear subspaces of type $(1, (k-1)/2, n-(k-1)/2 - 1)$.
\end{enumerate}
\end{corollary}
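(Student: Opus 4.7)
By Proposition \ref{prop:critical_locus}, the critical set of $f$ is the union $\bigsqcup_{\vec{n}} \Gr_J(\vec{n}; V)$ taken over all types $\vec{n} = (n_0, n_+, n_-)$ with $n_0 + 2n_+ = k$, and by Proposition \ref{prop:critical_values} the value of $f$ on $\Gr_J(\vec{n}; V)$ is exactly $n_0$. Since $\Gr(k; V)$ is compact, $f$ attains its global maximum and minimum on the critical set, so the extrema are obtained by optimizing the integer $n_0$ over the admissible types. The plan is therefore to convert the claim into a finite integer optimization problem and to identify the extremal strata.

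First, I would list the constraints on $\vec{n}$: nonnegativity $n_0, n_+, n_- \ge 0$, the subspace dimension constraint $n_0 + 2n_+ = k$, and the ambient constraint $n_0 + n_+ + n_- = n$. Eliminating $n_+ = (k-n_0)/2$ and $n_- = n - (k+n_0)/2$, these reduce to the parity condition $n_0 \equiv k \pmod 2$ together with the bounds $0 \le n_0 \le \min(k, 2n-k)$.

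For the maximum, the upper bound $n_0 \le \min(k, 2n-k)$ is achieved by either $n_+ = 0$ (giving $n_0 = k$, only possible when $k \le n$, which is the isotropic stratum) or $n_- = 0$ (giving $n_0 = 2n-k$, only possible when $k \ge n$, the coisotropic stratum). By Corollary \ref{cor:iso_coiso_compactness}, in both cases $\Gr_J(\vec{n}; V) = \Gr(\vec{n}; V)$, so the maxima are attained on the full isotropic, respectively coisotropic, Grassmannian. For the minimum, if $k$ is even the parity constraint allows $n_0 = 0$, corresponding to $\vec{n} = (0, k/2, n-k/2)$; here $W$ is symplectic and $J$-compatibility forces $W = W \cap JW$, so $W$ is $J$-invariant and $\Gr_J(\vec{n}; V)$ identifies with the complex Grassmannian $\Gr_\C(k/2; (V, J))$. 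If $k$ is odd, then $n_0 \ge 1$ by parity, and the minimum $n_0 = 1$ is realized by $\vec{n} = (1, (k-1)/2, n - (k-1)/2 - 1)$.

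There is no significant obstacle, as the result is essentially a bookkeeping consequence of Propositions \ref{prop:critical_locus} and \ref{prop:critical_values}. The only subtlety is the parity constraint in the odd case (which prevents the naive minimum $n_0 = 0$) and the identification of the symplectic $J$-compatible stratum with the complex Grassmannian; both are immediate once the constraints are written out explicitly.
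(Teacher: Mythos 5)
Your proposal is correct and takes essentially the same route as the paper, which simply cites Proposition \ref{prop:critical_values}; you spell out the integer optimization over admissible types $\vec{n}$ (the parity constraint $n_0\equiv k\pmod 2$ and the bound $n_0\le\min(k,2n-k)$) that the paper leaves implicit. The appeal to Corollary \ref{cor:iso_coiso_compactness} to explain why the maximum is attained on the \emph{full} isotropic or coisotropic Grassmannian, not merely on the $J$-compatible stratum, is a welcome clarification that matches the statement's wording.
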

\begin{proof}
This follows from Proposition \ref{prop:critical_values}.
\end{proof}

\subsection{The Morse-Bott property} \label{subsec:morse_bott}

\begin{theorem}[Morse-Bott Property]\label{thm:morse_bott}
The energy function $f: \Gr(k; V) \to \R$ is a Morse-Bott function. Its critical set is the disjoint union of the smooth submanifolds $\Gr_J(\vec{n}; V)$, and its Hessian is non-degenerate on the normal bundle of these submanifolds.
\end{theorem}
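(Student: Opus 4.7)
The theorem splits into two claims: (i) the critical set is a disjoint union of smooth submanifolds, and (ii) the Hessian is non-degenerate on the normal bundle at each critical point. Claim (i) is essentially already proven: Proposition \ref{prop:critical_locus} gives $\Crit(f) = \bigsqcup_{\vec n} \Gr_J(\vec n; V)$, and Section \ref{subsec:unitary} identifies each piece as a single $\U(V,J)$-orbit, hence a compact embedded smooth submanifold diffeomorphic to $\U(n)/(\O(n_0) \times \U(n_+) \times \U(n_-))$; the pieces are pairwise disjoint since the triples $(n_0, n_+, n_-)$ separate them.

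For claim (ii), I would first use $\U(V,J)$-equivariance of $f$ to reduce the non-degeneracy check to a single base point $P$ in each critical component, and invoke constancy of $f$ on $\Gr_J(\vec n; V)$ (Proposition \ref{prop:critical_values}) to obtain the easy inclusion $T_P \Gr_J(\vec n; V) \subseteq \ker \Hess(f)_P$ for free. The real content is the reverse inclusion.

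I would then fix an orthonormal Darboux basis adapted to $V = V_0 \oplus V_+^J \oplus V_-^J$ from Theorem \ref{thm:Jcompatible_darboux}. A direct computation at $P$ yields explicit block forms for $J$ and for the ambient gradient $G := [J, [P, J]]$: in particular $G$ vanishes on $V_+^J \oplus V_-^J$ and equals $2(2P|_{V_0} - \Id_{V_0})$ on $V_0$. Decomposing $T_P\Gr(k; V) \cong \Hom_\R(W, W^\perp)$ into the four blocks arising from $W = W_0 \oplus W_+^J$ and $W^\perp = JW_0 \oplus W_-^J$, the tangent directions to $\Gr_J(\vec n; V)$ on each block correspond to variations preserving isotropy of the kernel and the complex structure of the $J$-invariant part, whereas the complementary directions (skew, respectively $J$-anti-linear) are genuinely normal. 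Using the Hessian formula of Lemma \ref{lem:derivatives}(3) together with the identity $\ad_J^2 = -4\,\Id$ on the anti-$J$-commuting part $\mathfrak m = \{\xi \in \so(V) : J\xi + \xi J = 0\}$, a block-by-block check shows that $-\pr_P\psi_J^2(Y) + \A_G(Y)$ vanishes on each tangent-to-$\Gr_J$ piece and acts as a definite multiple of $\Id$ on each complementary normal piece.

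The main technical obstacle is the $\Hom(W_0, JW_0)$ block, where both Hessian terms are active and must be combined carefully. A direct matrix calculation at the base point yields $-\pr_P\psi_J^2(Y) = 4Y$ and $\A_G(Y) = -4Y$ on variations $Y$ coming from symmetric $n_0 \times n_0$ matrices (cancelling to $0$, consistent with tangency to $\Gr_J(\vec n; V)$ via the infinitesimal $\U(V,J)$-action modulo the $\O(n_0)$-stabilizer), while for skew-symmetric variations $-\pr_P\psi_J^2(Y) = 0$ and $\A_G(Y) = -4Y$, yielding a strictly negative-definite operator on the normal directions. The shape operator term is essential here precisely because $G$ is normal to $\Gr(k; V)$ at a critical point rather than tangent; tracking this interaction through the four blocks is the crux of the verification.
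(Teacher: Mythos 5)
Your proposal is correct and follows essentially the same route as the paper's proof: fix a base point in each critical component, choose an orthonormal basis adapted to $V = W_0 \oplus W_+^J \oplus JW_0 \oplus W_-^J$, decompose $T_P\Gr(k;V) \cong \Hom(W,W^\perp)$ into the four blocks $(A,B,C,D)$, apply the Hessian formula from Lemma \ref{lem:derivatives}(3), and verify block-by-block that the kernel is exactly the tangent space to $\Gr_J(\vec n;V)$. Your explicit numerics on the $\Hom(W_0,JW_0)$ block ($4A$ and $-4A$ on symmetric, $0$ and $-4A$ on skew) match the paper's combined formula $\Hess(f)_P(Y)|_A = 2(A^T - A)$, and your overall identification of the normal directions (skew $A$ and anti-$J$-linear $D$) is correct. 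Two small stylistic differences: you obtain the inclusion $T_P\Gr_J \subseteq \ker\Hess$ for free from constancy of $f$ along the orbit, whereas the paper instead computes $T_P\Gr_J = \ad_P(\u(V,J))$ explicitly and shows the two sets are equal; and the identity $\ad_J^2 = -4\Id$ you invoke applies only to the $J$-anticommuting part, which covers the anti-$J$-linear $D$'s and the symmetric $A$'s but not the skew $A$'s (the genuinely normal $A$-directions), so you must indeed compute that block directly — which you correctly flag as the "main technical obstacle."
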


\begin{proof}
The first claim is proved in Proposition~\ref{prop:critical_locus}. 

Fix $P=P_W\in \Gr_J(\vec n;V)$. Since $W$ is $J$-compatible, we have orthogonal decompositions
\[
W=W_0\oplus W_+^J,\qquad W^\perp=JW_0\oplus W_-^J.
\]

\smallskip
Choose an orthonormal basis adapted to the ordered splitting
\[
V = W_0\oplus W_+^J\oplus JW_0\oplus W_-^J.
\]
In this basis,
\[
P=
\begin{pmatrix}
\Id_{n_0} & 0 & 0 & 0\\
0 & \Id_{2n_+} & 0 & 0\\
0 & 0 & 0 & 0\\
0 & 0 & 0 & 0
\end{pmatrix},
\qquad
J=
\begin{pmatrix}
0 & 0 & -\Id_{n_0} & 0\\
0 & J_+ & 0 & 0\\
\Id_{n_0} & 0 & 0 & 0\\
0 & 0 & 0 & J_-
\end{pmatrix},
\]
where $J_\pm^2=-\Id$ and $J_\pm^T=-J_\pm$. 
\smallskip

Using the identification
\begin{align*}
    T_P\Gr(k; V) &= \Hom(W, W^\perp) = \Hom(W_0 \oplus W_+^J, W_-^J\oplus JW_0)\\ &\cong \Hom(W_0, JW_0) \oplus \Hom(W_+^J, JW_0) \oplus \Hom(W_0, \oplus W_-^J) \oplus \Hom(W_+^J, W_-^J),
\end{align*}
we can express a tangent vector $Y \in T_P\Gr(k; V)$ as
\[ Y = (A, B, C, D)\]
where $A\in\Hom(W_0,JW_0)$, $B\in\Hom(W_+^J,JW_0)$, $C\in\Hom(W_0,W_-^J)$, $D\in\Hom(W_+^J,W_-^J)$.
Relative to the splitting $V=W_0\oplus W_+^J\oplus JW_0\oplus W_-^J$,
\[
Y=
\begin{pmatrix}
0 & 0 & A^T & C^T\\
0 & 0 & B^T & D^T\\ 
A & B & 0 & 0\\
C & D & 0 & 0
\end{pmatrix}
\]

\smallskip

In this notation we can compute as in Lemma \ref{lem:derivatives}
\begin{align*}
 -\pr_P(\psi_J^2(Y)) &= [P, [P, [J, [Y, J]]]] = (2(A + A^T), 2B, 2C, 2D + 2J_- D J_+)\\
\A_{\psi_J(X_J)_P}(Y) &= [P, [Y, [J, [P, J]]]] = (-4A, -2B, -2C, 0).\end{align*}

Summing yields the Hessian action:
\begin{equation}\label{eq:hessian-action-correct}
\Hess(f)_P(Y) =  \big(2(A^T-A),\,0,\,0,\,2(D+J_-DJ_+)\big).
\end{equation}

Hence
\begin{equation}\label{eq:ker-hess-correct}
\ker \Hess(f)_P=\{(A,B,C,D):\ A=A^T,\ \ J_-D=DJ_+\},
\end{equation}
and the normal directions are exactly the skew part of $A$ and the $J$-antilinear part of $D$; in particular the Hessian is nondegenerate on the normal bundle.

\smallskip
The critical submanifold containing $P$ is the $\U(V,J)$-orbit passing through $P$, so by Lemma~\ref{lem:commutators}(1) applied to the $\U(V,J)$-action
\[
T_P\Gr_J(\vec n;V)=\ad_P(\u(V,J))=\{-[P,\xi]:\ \xi\in\u(V,J)\}.
\]

The elements of the Lie algebra $\u(V, J) = \sp(V) \cap \so(V)$ are matrices $\xi$ such that $\xi^TJ + J\xi = 0$ and $\xi = -\xi^T$. Equivalently, they are skew-symmetric matrices $X$ that commute with $J$. With respect to the splitting $V = W_0\oplus W_+^J\oplus JW_0\oplus W_-^J$, they are
\[ \xi = \begin{pmatrix} E & BJ_+ & -A^T & -C^T \\ -J_+ B & F & - B^T & -D^T \\ A & B & E & -C^TJ_- \\ C & D & J_-C & G \end{pmatrix} \]
such that the entries satisfy the following:
\begin{itemize}
    \item $E$, $F$, $G$ are skew-symmetric, and $[F, J_+] = 0$ and $[G, J_-] = 0$, i.e.
\[ (E, F, G) \in \so(W_0) \times \u(W_+^J, J_+) \times \u(W_-^J, J_-)\]
\item $A$ is symmetric, and $D$ is complex linear $J_-D = DJ_+$.
\end{itemize}
Computing the commutator, we obtain
\[ [\xi, P] = \begin{pmatrix} 0 & 0 & -A^T & -C^T \\ 0 & 0 & - B^T & -D^T \\ A & B & 0 & 0 \\ C & D & 0 & 0 \end{pmatrix}.\]
and thus we have
\[ \ad_P(\u(V, J)) = T_P\Gr_J(\vec{n}; V) = \ker \Hess(f)_P,\]
which is the Morse-Bott condition.
\end{proof}

\subsection{Gradient flows}
\label{subsec:gradient_flow}

Since $\Gr(k; V)$ is compact, the negative gradient flow $\phi_t$ exists for all $t \in \mathbb{R}$. Moreover, since $f$ is real analytic, the \L ojasiewicz gradient inequality ensures that these flow lines have finite length and each flow line converges to a unique critical point \cite{Lojasiewicz1}, \cite[Section IV.9]{Lojasiewicz2}.

We denote the trajectory by $P(t) := \phi_t(P)$ and its limit by $P_\infty := \lim_{t \to \infty} P(t)$.

\begin{proposition}[Decoupling of negative gradient flow]
\label{prop:decoupling}
The negative gradient flow of $f$ decouples with respect to the  K\"{a}hler angle decomposition in Proposition \ref{cor:kahler_decomp}:
\[ \phi_t(W) = W_0 \oplus W_+^J \oplus \bigoplus_{j = 1}^{n_+^\theta} \phi_t^j(W_j^\theta) \in \Gr(k; V), \]
where $\phi_t^j$ is the negative gradient flow of $f_j$ on $\Gr(2; V_j^\theta)$.
\end{proposition}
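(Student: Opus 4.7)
The plan is to use the iterated-commutator formula for the gradient to decouple the negative gradient ODE along a fixed orthogonal decomposition of $V$, then observe that the restricted flows on $V^0$ and $V^J$ are trivial. I would first fix, once and for all, the K\"{a}hler decomposition $V = V^0 \oplus V^J \oplus \bigoplus_{j=1}^{n_+^\theta} V_j^\theta$ determined by the \emph{initial} subspace $W$ via Corollary~\ref{cor:kahler_decomp}, treating it as a fixed decomposition of the ambient vector space independent of the time parameter. By Proposition~\ref{prop:kahler_splittings} and Corollary~\ref{cor:kahler_decomp}, this decomposition is orthogonal and every summand is $J$-invariant, so $J$ is block-diagonal. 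The projection $P_W$ is also block-diagonal: $W \cap V^0 = W_0$ with $W_0 \perp J W_0$; $W \cap V^J = W_+^J$ with orthogonal complement $W_-^J$ inside $V^J$; and $W \cap V_j^\theta = W_j^\theta$ with its $2$-dimensional orthogonal complement inside the $4$-dimensional $V_j^\theta$.

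Next I would apply the gradient formula $(\grad f)_P = \bigl[P, [P, [J, [P, J]]]\bigr]$ from Lemma~\ref{lem:derivatives}. Iterated commutators of block-diagonal matrices are block-diagonal, and the $V_m$-block of $(\grad f)_P$ is precisely $(\grad f_m)_{P|_{V_m}}$ by applying the same formula inside $V_m$. The product embedding
\[
\iota: \prod_m \Gr\bigl(\dim(W \cap V_m); V_m\bigr) \hookrightarrow \Gr(k; V), \qquad (P_m) \mapsto \bigoplus_m P_m,
\]
is isometric for the trace inner product and satisfies $f \circ \iota = \sum_m f_m$ (in the spirit of Proposition~\ref{prop:energy_decomposition}), so its image is invariant under the negative gradient flow. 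By uniqueness of solutions to the gradient ODE, $\phi_t(W)$ stays in this image and splits as the product of the negative gradient flows on the factors.

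Finally, I would identify the flows on $V^0$ and $V^J$ as stationary: $W_0 \subset V^0$ is Lagrangian with trivial maximal complex subspace, hence $J$-compatible and critical by Proposition~\ref{prop:critical_locus}; and $W_+^J \subset V^J$ is $J$-invariant, so $[P, J]|_{V^J} = 0$ and also critical. Only the flows $\phi_t^j$ on the four-dimensional blocks $V_j^\theta$ are nontrivial. The one conceptual subtlety is that the K\"{a}hler decomposition is defined point-wise in terms of $W$ and could, a priori, vary along the flow; this is sidestepped by freezing the decomposition at $t = 0$ and letting the block-diagonal structure of the ODE verify a posteriori that the frozen decomposition remains adapted to $W(t)$. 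Beyond this bookkeeping point, the argument is an immediate consequence of the commutator formulas already established.
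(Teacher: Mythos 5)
Your proposal is correct and takes essentially the same approach as the paper: both apply the iterated-commutator gradient formula from Lemma~\ref{lem:derivatives}, observe that it is block-diagonal with respect to the $J$-invariant, $P$-invariant orthogonal splitting $V = V^0 \oplus V^J \oplus \bigoplus_j V_j^\theta$, and conclude the ODE decouples with the $V^0$ and $V^J$ blocks stationary. Your explicit appeal to the product embedding and to uniqueness of ODE solutions, and your remark about freezing the decomposition at $t=0$, make the bookkeeping slightly more explicit than the paper's, but the substance is identical.
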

\begin{proof}
The gradient vector field of the energy function is given by
\[ (\grad f)_P = \big[ P, [P, [J, [P, J]]] \big]. \]
Let $P(0)$ be the projection onto a subspace $W$ with K\"{a}hler angle decomposition $V = V^0 \oplus V^J \oplus \bigoplus V_j^\theta$.
Within each invariant subspace $U \in \{V^0, V^J, V_j^\theta\}$, the initial projection $P(0)$ acts as a projection $P|_U$ onto $W \cap U$, and the complex structure $J$ restricts to a complex structure $J|_U$.
Since the operations of taking commutators and restricting to invariant subspaces commute, the gradient vector field splits block-diagonally:
\[ (\grad f)_P \big|_U = \big[ P|_U, [P|_U, [J|_U, [P|_U, J|_U]]] \big] = (\grad f|_{U})_{P|_U}. \]

Therefore, the differential equation $\dot{P}(t) = -(\grad f)_{P(t)}$ splits into independent differential equations on each invariant subspace $U$.
Specifically:
\begin{itemize}
    \item On $V^0$, $W_0$ is Lagrangian, so $f|_{V^0}$ is constant (maximal) and $(\grad f|_{V^0}) = 0$. Thus $\phi_t(W_0) = W_0$.
    \item On $V^J$, $W_+^J$ is complex, so $f|_{V^J}$ is constant (minimal) and $(\grad f|_{V^J}) = 0$. Thus $\phi_t(W_+^J) = W_+^J$.
    \item On each $V_j^\theta$, the flow is non-trivial and driven by the restriction of the gradient field to that block.
\end{itemize}
Summing these components gives the result.
\end{proof}

\begin{lemma}
\label{lem:limit_symplectic}
If $P$ is a two dimensional symplectic linear subspace of a four dimensional symplectic vector space $V$, then $f(P_\infty) = 0$.
\end{lemma}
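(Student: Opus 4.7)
The plan is to combine the classification of $2$-dimensional subspaces in a $4$-dimensional symplectic space with the monotonicity of $f$ along the negative gradient flow. The whole argument reduces to a pigeonhole on the finite set of critical values.

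First, I would enumerate the orbit types. With $n=2$ and $k=2$, the constraints $n_0+n_++n_-=2$ and $n_0+2n_+=2$ admit exactly two solutions: the symplectic type $(0,1,1)$ and the Lagrangian type $(2,0,0)$. By Proposition~\ref{prop:critical_values}, the set of critical values of $f$ on $\Gr(2;V)$ is therefore $\{0,2\}$.

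Next, I would bound the initial energy strictly. Since $P$ is a $2$-dimensional symplectic subspace of the $4$-dimensional space $V$, its K\"{a}hler angle $\theta$ satisfies $\theta \in [0,\pi/2)$: the extreme value $\theta=\pi/2$ would force $\omega|_P=0$, making $P$ isotropic and hence Lagrangian in this dimension, contradicting the symplectic hypothesis. By Proposition~\ref{prop:energy_decomposition}, $f(P)=2\sin^2\theta<2$.

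Finally, I would apply monotonicity. The negative gradient flow is non-increasing in $f$, so $f(P_\infty)\le f(P)<2$. Since $P_\infty$ is a critical point, $f(P_\infty)\in\{0,2\}$, and the strict inequality forces $f(P_\infty)=0$. There is no real obstacle here; the lemma is essentially a pigeonhole on critical values enabled by the strict upper bound $f(P)<2$ coming from the symplectic hypothesis.
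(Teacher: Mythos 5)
Your proposal is correct and follows essentially the same route as the paper: identify the possible critical values $\{0,2\}$ on $\Gr(2;V)$, establish the strict bound $f(P)<2$ from the symplectic hypothesis via the K\"{a}hler angle/energy decomposition, and conclude by monotonicity of $f$ along the negative gradient flow. The only cosmetic difference is that the paper invokes the orbit-incidence proposition to restrict $P_\infty$ to the closure of $\Gr(0,1,1;V)$, whereas you simply enumerate that $(0,1,1)$ and $(2,0,0)$ are the only types with $k=2$, $n=2$; these yield the same set of candidate critical values here.
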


\begin{proof}
We know from Proposition \ref{prop:orbit_incidence}, that because $P_\infty$ is a limit point,
 \[ P_\infty \in \overline{\Gr(0, 1, 1; V)} = \Gr(2, 0, 0; V) \sqcup \Gr(0, 1, 1; V). \]
Moreover, since $P_\infty$ is also a critical point, 
 \[P_\infty \in \Crit(f) \cap \overline{\Gr(0, 1, 1; V)}= \Gr_J(2, 0, 0; V) \sqcup \Gr_J(0, 1, 1; V).\]
  So from Proposition \ref{prop:critical_values} either $f(P_\infty) = 2$ or $f(P_\infty) = 0$.
  
From Lemma \ref{lem:energy_bounds}, and because the negative gradient flow decreases the energy,
\[ f(P_\infty) \le f(P) < 2.\]
So $f(P_\infty) = 0$.
\end{proof}

\begin{example}[Energy on a trajectory]
Consider $V = \R^4$ with symplectic form
\[ \omega(u, v):= v^T \begin{pmatrix} 0 & -\Id_2 \\ \Id_2 & 0 \end{pmatrix} u\]
and complex structure
\[ J = \begin{pmatrix} 0 & -\Id_2 \\ \Id_2 & 0 \end{pmatrix}. \]
Consider the one-parameter subgroup of symplectic matrices $g(t) \in \Sp(V)$ defined by:
\[
g(t) = \begin{pmatrix}
\cosh t & 0 & 0 & \sinh t \\
0 & \cosh t & \sinh t & 0 \\
0 & \sinh t & \cosh t & 0 \\
\sinh t & 0 & 0 & \cosh t
\end{pmatrix}.
\]
Let $W = \Span(e_1, f_1)$ be the standard $J$-invariant reference subspace. Consider the $1$-parameter family $W(t) = g(t)W$. 
Then $W(t)$ is spanned by the orthogonal vectors
\[
v_1 = \cosh t \, e_1 + \sinh t \, f_2, \quad v_2 = \sinh t \, e_2 + \cosh t \, f_1.
\]
The limiting subspace
\[ \lim_{t \to \infty} W(t) = \Span_\R\{ e_1 + f_2, e_2 + f_1\} \]
is Lagrangian.
Normalizing $v_1$ and $v_2$ yields the orthonormal basis $u_1 = v_1/N$ and $u_2 = v_2/N$, where $N = \sqrt{\cosh(2t)}$.
The orthogonal projection matrix $P_{W(t)} = u_1 u_1^T + u_2 u_2^T$ is given by:
\[
P_{W(t)} = \frac{1}{N^2} \begin{pmatrix}
(\cosh t)^2 & 0 & 0 & \cosh t \sinh t \\
0 & (\sinh t)^2 & \sinh t \cosh t & 0 \\
0 & \sinh t \cosh t & (\cosh t)^2 & 0 \\
\cosh t \sinh t & 0 & 0 & (\sinh t)^2
\end{pmatrix}.
\]

The commutator is:
\[
[P_{W(t)}, J] = \frac{\sinh(2t)}{\cosh(2t)} \begin{pmatrix}
0 & 1 & 0 & 0 \\
1 & 0 & 0 & 0 \\
0 & 0 & 0 & -1 \\
0 & 0 & -1 & 0
\end{pmatrix},
\]
and its square is
\[
[P(t), J]^2 = \tanh^2(2t) \cdot \Id_4.
\]
Finally, the energy is the half-trace:
\[
f(W(t)) = \frac{1}{2} \Tr \left( [P(t), J]^2 \right) = 2 \tanh^2(2t).
\]
This trajectory connects the complex minimum ($n_0 = 0$) at $t=0$ to the Lagrangian maximum ($n_0 = 2$) as $t \to \infty$.
\end{example}

\subsection{Type preservation}\label{subsec:type_preservation}

We will view the Lie algebra $\sp(V)$ of $\Sp(V)$ as the matrix Lie algebra \[\sp(V) = \{ X \in \Mat_{2n \times 2n}(\R): XJ + JX^T = 0\}\]

The Lie algebra $\sp(V)$ admits a Cartan decomposition based on the interaction with the complex structure $J$. For $\sp(V)$ the Cartan involution $\theta(X) = - X^T$ agrees with $\Ad(J)(X)$ by rearranging the defining relation $XJ + JX^T = 0$. 

Since $\theta^2 = \Id$, the algebra splits into the $\pm 1$ eigenspaces of $\theta$:
\[ \sp(V) = \k \oplus \p, \]
where $\k$ is the $+1$ eigenspace and $\p$ is the $-1$ eigenspace. The eigenspace condition implies $\k \subseteq \so(V)$ and $\p \subseteq \Sigma$. Because $\sp(V)$ may have non skew-symmetric elements, the conjugation $e^{t\xi} P e^{-t\xi}$ may not be an orthogonal projection. To account for this, let
\[ \ad^\bullet_P (\xi) := \frac{1}{2} \ad_P (\xi - \xi^T) - \frac{1}{2} \ad_P^2 (\xi+\xi^T)\]
for $P \in \Gr(k; V)$ and $\xi \in \sp(V)$.
\begin{proposition}[Fundamental Vector Fields]
\label{prop:fundamental_vf}
There exists a short exact sequence
\[ 0 \to \sp_W(V) \to \sp(V) \xrightarrow{-\ad^\bullet_{P_W}} T_{W}(\Sp(V)\cdot W) \to 0.\]
For any $\xi \in \sp(V)$ its fundamental vector field $X_\xi \in \Gamma(T\Gr(k; V))$ is given by
        \[ (X_\xi)_P:= -\ad^\bullet_P(\xi).\]
\end{proposition}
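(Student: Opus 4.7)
The plan is to exploit the Cartan-style decomposition $\xi = \xi_a + \xi_s$ with $\xi_a=\tfrac{1}{2}(\xi-\xi^T)\in\k$ and $\xi_s=\tfrac{1}{2}(\xi+\xi^T)\in\p$ just introduced above, so that the target formula reads $-\ad_P^\bullet(\xi) = -\ad_P(\xi_a) + \ad_P^2(\xi_s)$. Since the assignment $\xi\mapsto(X_\xi)_P$ is $\R$-linear, it suffices to handle the two summands independently and to verify that each lands in $T_P\Gr(k;V)\subseteq\Sigma$; the latter is already built into Lemma~\ref{lem:commutators}.

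For the skew part $\xi_a\in\k\subseteq\so(V)$, the flow $e^{t\xi_a}$ is orthogonal and therefore intertwines with orthogonal projection: $P_{e^{t\xi_a}W}=e^{t\xi_a}P\,e^{-t\xi_a}$. Differentiating at $t=0$ is immediate and reproduces $(X_{\xi_a})_P=[\xi_a,P]=-\ad_P(\xi_a)$, exactly as in Lemma~\ref{lem:commutators}(1).

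The heart of the argument is the symmetric part, where $e^{t\xi_s}$ is no longer orthogonal and $P_{e^{t\xi_s}W}$ is \emph{not} the naive conjugate of $P$. My plan is to pick a matrix $U_0$ of orthonormal columns spanning $W$ (so $P=U_0U_0^T$ and $U_0^TU_0=\Id_k$), let $U(t)=e^{t\xi_s}U_0$, and represent the orthogonal projection onto the range of $U(t)$ via the Gram-Schmidt formula $P(t)=U(t)\bigl(U(t)^TU(t)\bigr)^{-1}U(t)^T$. Applying the product rule at $t=0$ and using $\xi_s=\xi_s^T$ to combine the contributions of $\dot U$, $\dot U^T$, and $\tfrac{d}{dt}\bigl|_0(U^TU)^{-1}$, I expect $\dot P(0)=\xi_sP+P\xi_s-2P\xi_sP$, which is precisely the expansion of $[P,[P,\xi_s]]=\ad_P^2(\xi_s)=\pr_P(\xi_s)$. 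This Gram-Schmidt correction---which measures the failure of the naive conjugation $e^{t\xi_s}P\,e^{-t\xi_s}$ to land in the projection class---is the main obstacle of the proof; everything else is linear algebra already packaged in Lemma~\ref{lem:commutators}.

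Summing the two contributions yields $(X_\xi)_P=-\ad_P^\bullet(\xi)$. For the short exact sequence, surjectivity of $-\ad_P^\bullet$ onto $T_{W}(\Sp(V)\cdot W)$ is automatic, since orbit tangent spaces are by definition spanned by the values of fundamental vector fields. The kernel consists of those $\xi\in\sp(V)$ whose flow preserves $W$, equivalently those satisfying $\xi W\subseteq W$, which is precisely the Lie algebra $\sp_W(V)$ of the isotropy subgroup $\Sp_W(V)$, giving exactness at $\sp(V)$.
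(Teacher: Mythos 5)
Your proof is correct but follows a different path than the paper's. The paper treats a general $\xi \in \sp(V)$ uniformly: it differentiates the defining conditions $P(t)^2 = P(t)$, $P(t)^T = P(t)$, and $P(t)e^{t\xi}u = e^{t\xi}u$ to pin down the off-diagonal blocks $P^\perp \dot P P = P^\perp \xi P$ and $P \dot P P^\perp = P\xi^T P^\perp$, and then rewrites the sum in commutator form via the identities $P^\perp A P = \tfrac12([P,[P,A]] + [A,P])$ and $P A P^\perp = \tfrac12([P,[P,A]] - [A,P])$. You instead split $\xi$ into its skew and symmetric Cartan components, invoke the already-established Lemma~\ref{lem:commutators}(1) for the skew part, and compute the symmetric part explicitly via the Gram-Schmidt projection $P(t) = U(t)(U(t)^T U(t))^{-1}U(t)^T$ with $U(t) = e^{t\xi_s}U_0$. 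Your computation is right --- the product rule with $U_0^T U_0 = \Id_k$ and $\xi_s = \xi_s^T$ does give $\dot P(0) = \xi_s P + P\xi_s - 2P\xi_s P = [P,[P,\xi_s]]$ --- and the characterization of the kernel via the standard Lie-theoretic fact that $X_\xi(P) = 0$ iff $\xi$ lies in the isotropy algebra is also sound. A minor observation: the Cartan split is conceptually tidy but not essential, since the Gram-Schmidt formula applied directly to a general $\xi$ also produces $\dot P(0) = \xi P + P\xi^T - P(\xi + \xi^T)P$, which is exactly $-\ad^\bullet_P(\xi)$; so your ``heart of the argument'' already covers the whole proposition. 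The paper's implicit characterization is a bit more elegant, while your explicit Gram-Schmidt computation makes the dependence on orthonormalization concrete.
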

\begin{proof}
Let $W \in \Gr(k; V)$ with orthogonal projection matrix $P$. Consider the curve of subspaces $W(t) = e^{t\xi} W$ generated by $\xi \in \sp(V)$. Let $P(t)$ be the smooth curve of orthogonal projection matrices onto $W(t)$. We compute the tangent vector $X_\xi(P) := \dot{P}(0)$.

The condition that $P(t)$ projects onto $W(t)$ means that for any $u \in W$, the vector $v(t) = e^{t\xi}u$ satisfies $P(t) v(t) = v(t)$. Differentiating at $t=0$:
\[ \dot{P} P + P \xi P = \xi P. \]
Multiplying by $P^\perp = \Id - P$ on the left annihilates the second term ($P^\perp P = 0$), yielding:
\[ P^\perp \dot{P} P = P^\perp \xi P. \]
Since $P(t)$ is a projection ($P^2=P$), differentiating yields $\dot{P}P + P\dot{P} = \dot{P}$. Multiplying by $P$ on both sides yields $P\dot{P}P = 0$ (diagonal blocks vanish). Thus $\dot{P}$ decomposes into off-diagonal blocks:
\[ \dot{P} = P \dot{P} P^\perp + P^\perp \dot{P} P. \]
Since $P(t)$ is symmetric, $\dot{P}$ is symmetric. Therefore, the first term is the transpose of the second:
\[ P \dot{P} P^\perp = (P^\perp \dot{P} P)^T = (P^\perp \xi P)^T = P \xi^T P^\perp. \]
Summing the two blocks yields
\[ \dot{P} = P^\perp \xi P + P \xi^T P^\perp. \]

Now we utilize the identities $P^\perp A P = \frac{1}{2}([P, [P, A]] + [A, P])$ and $P A P^\perp = \frac{1}{2}([P, [P, A]] - [A, P])$ for any matrix $A$.
Substituting $A=\xi^T$ into the first and $A=\xi$ into the second:
\begin{align*}
P \xi P^\perp &= \frac{1}{2} \big( [P, [P, \xi^T]] + [\xi^T, P] \big) \\
P^\perp \xi P &= \frac{1}{2} \big( [P, [P, \xi]] - [\xi, P] \big)
\end{align*}
Summing these gives :
\[
(X_\xi)_P= \dot{P} = \frac{1}{2} \left[ P, \left[ P, \xi + \xi^T \right] \right] - \frac{1}{2} \left[ P, \xi - \xi^T \right] = -\ad_P^\bullet(\xi).
\]

The kernel of $\xi \mapsto (X_\xi)_P$ is the stabilizer algebra $\sp_W(V)$ (generators fixing $W$). The map is surjective onto the tangent space of the orbit $T_W(\Sp(V)\cdot W)$. Thus the sequence is exact.
\end{proof}

\begin{lemma}[Gradient as an infinitesimal symplectic symmetry]
\label{lem:gradient_symmetry}
For any $P \in \Gr(k; V)$, the negative gradient vector $-(\grad f)_P \in T_P(\Sp(V)\cdot P)$ and indeed equals the fundamental vector field at $P$ for the symmetry generator $Z = -[J, [P, J]] \in \p \subseteq \sp(V)$.
\end{lemma}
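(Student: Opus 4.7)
The plan is to read the ambient gradient formula from Lemma \ref{lem:derivatives} and exhibit its ``inner factor'' as an element of $\p$, so that Proposition \ref{prop:fundamental_vf} collapses the fundamental vector field formula onto the double-commutator appearing in $\grad f$. Concretely, set
\[
Z := -[J, [P, J]],
\]
so that $(\grad f)_P = [P, [P, [J, [P, J]]]] = -[P, [P, Z]]$ by Lemma \ref{lem:derivatives}.

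The first step is to verify $Z \in \p$, which has two parts. For symmetry, I would use the general rule that the commutator of a symmetric and a skew-symmetric matrix is symmetric: since $P^T = P$ and $J^T = -J$, the bracket $[P, J]$ is symmetric, and then $[J, [P, J]]$ is symmetric again, so $Z^T = Z$. For the symplectic condition, it is cleanest to expand the nested commutator using $J^2 = -\Id$ and observe that
\[
Z = -J[P,J] + [P,J]J = -2(P + JPJ).
\]
The anticommutator with $J$ then telescopes: $\{P + JPJ, J\} = PJ + JP - JP - PJ = 0$. Since $Z$ is symmetric, the defining condition $ZJ + JZ^T = 0$ for $\sp(V)$ becomes $\{Z, J\} = 0$, which is exactly what we just verified. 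Combined with $Z^T = Z$, this places $Z$ in the $-1$ eigenspace $\p$ of the Cartan involution $\theta$.

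The second step applies Proposition \ref{prop:fundamental_vf} to the symmetric generator $\xi = Z$. Because $Z - Z^T = 0$ and $Z + Z^T = 2Z$, the general formula
\[
\ad^\bullet_P(\xi) = \tfrac12\ad_P(\xi - \xi^T) - \tfrac12\ad_P^2(\xi + \xi^T)
\]
collapses to $\ad^\bullet_P(Z) = -\ad_P^2(Z) = -[P, [P, Z]]$. Therefore
\[
(X_Z)_P = -\ad^\bullet_P(Z) = [P, [P, Z]] = -[P, [P, [J, [P, J]]]] = -(\grad f)_P,
\]
giving simultaneously the membership $-(\grad f)_P \in T_P(\Sp(V)\cdot P)$ and the identification of the generator.

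The only genuinely non-formal step is the verification $\{Z, J\} = 0$; everything else is bookkeeping against Proposition \ref{prop:fundamental_vf}. I expect no obstacle beyond being careful that the symmetric, rather than skew-symmetric, branch of $\ad^\bullet_P$ is the one that survives, which is the whole point of noticing that $[J, [P, J]]$ is symmetric rather than skew.
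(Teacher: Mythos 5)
Your proposal is correct and follows essentially the same route as the paper's proof: define $Z=-[J,[P,J]]$, check $Z^T=Z$ and $\{Z,J\}=0$ to place $Z\in\p$, then apply Proposition~\ref{prop:fundamental_vf} with the observation that the skew branch of $\ad_P^\bullet$ vanishes. The only cosmetic difference is that you verify $\{Z,J\}=0$ after first simplifying $Z=-2(P+JPJ)$, whereas the paper expands the nested commutator directly; both calculations are equivalent.
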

\begin{proof}
Let $Z$ be the image of $-\psi_J(X_J)_P \in T\Sigma|_P$ under the isomorphism $T\Sigma|_P \cong \Sigma$. Then as matrices $Z = - [J, [P, J]]$.
We first check that $Z \in \p$.
Since $J$ is skew and $P$ is symmetric, $[P, J]$ is symmetric, and $[J, [P, J]]$ is symmetric. Thus $Z^T = Z$.
Checking the symplectic condition:
\[ \{Z, J\} = -[J, [P, J]]J - J[J, [P, J]] = - (J[P, J]J - [P, J]J^2) - (J^2[P, J] - J[P, J]J) = 0. \]
Thus $Z \in \p$.
Using the formula from Proposition \ref{prop:fundamental_vf}, the fundamental vector field is:
\[ (X_Z)_P = -\ad^\bullet_P(Z) = -(-\ad_P^2(Z)) = [P, [P, Z]]. \]
Substitute $Z = -[J, [P, J]]$:
\[ (X_Z)_P = [P, [P, -[J, [P, J]]]] = - [P, [P, [J, [P, J]]]] = -(\grad f)_P. \]
\end{proof}

\begin{lemma}[Type preservation]
\label{lem:type_preservation}
Let $\{\phi_t\}_{t \in \R}$ be the negative gradient flow for the energy function, and for any point $P$ in $\Gr(k; V)$, let $P(t):= \phi_t(P)$ and $P_\infty := \lim_{t \to \infty} P(t)$. Then
\[ n_0(P) = n_0 (P(t)) = n_0 (P_\infty).\]
\end{lemma}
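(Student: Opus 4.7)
The plan is to split the statement into two independent claims: finite-time preservation $n_0(P) = n_0(P(t))$, and limit preservation $n_0(P) = n_0(P_\infty)$. The first is handled by the orbit-tangency of the gradient established in Lemma \ref{lem:gradient_symmetry}, and the second by the decoupling of Proposition \ref{prop:decoupling} combined with the four-dimensional base case in Lemma \ref{lem:limit_symplectic}.

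For the finite-time claim, I would observe that Lemma \ref{lem:gradient_symmetry} identifies $-(\grad f)_P$ with the fundamental vector field at $P$ of the symmetry generator $Z(P) = -[J, [P, J]] \in \p \subseteq \sp(V)$. Hence at each time $t$ the velocity $\dot{P}(t) = -(\grad f)_{P(t)}$ lies in $T_{P(t)}(\Sp(V)\cdot P(t)) = T_{P(t)}\Gr(\vec{n}(P(t)); V)$. Since the $\Sp(V)$-orbits $\Gr(\vec{n}; V)$ are the smooth leaves of a locally finite stratification of $\Gr(k;V)$, a standard ODE argument (a smooth vector field tangent to an immersed submanifold has integral curves confined to it) forces $P(t)$ to remain in the initial orbit. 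Thus the entire type $\vec{n}$, and in particular $n_0$, is constant along the flow.

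For the limit claim, I would invoke Proposition \ref{prop:decoupling}: the K\"ahler angle splitting $V = V^0 \oplus V^J \oplus \bigoplus_{j=1}^{n_+^\theta} V_j^\theta$ determined by the initial data $P$ is preserved by the negative gradient flow, and the flow fixes both $V^0$ and $V^J$ pointwise. Therefore $P_\infty = P_{W_0} \oplus P_{W_+^J} \oplus \bigoplus_j (P_j)_\infty$, and the isotropic kernel of $P_\infty$ decomposes as the sum of the isotropic kernels of the three kinds of summands. The first contributes $n_0$, the second contributes $0$ (as $W_+^J$ is complex), and on each $4$-dimensional block $V_j^\theta$ the restricted initial datum $W_j^\theta$ is a symplectic $2$-plane of type $(0,1,1)$, so Lemma \ref{lem:limit_symplectic} gives $f_j((P_j)_\infty) = 0$, forcing $(P_j)_\infty$ to be $J$-invariant with vanishing isotropic kernel. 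Summing yields $n_0(P_\infty) = n_0 + 0 + \sum_j 0 = n_0(P)$.

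The main obstacle is conceptual rather than computational: a priori one only knows the semicontinuity bound $n_0(P_\infty) \ge n_0(P)$ from Proposition \ref{prop:orbit_incidence}, so a lower stratum could absorb the limit. The argument above rules this out by reducing, via decoupling, to the single nontrivial base case of a symplectic $2$-plane in a $4$-dimensional symplectic space, where Lemma \ref{lem:limit_symplectic} shows the limit is forced into the minimum-energy stratum rather than the maximum-energy one. Once this base case is in place, the rest is bookkeeping on the block-diagonal structure guaranteed by Proposition \ref{prop:decoupling}.
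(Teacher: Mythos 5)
Your proposal is correct and rests on the same two pillars as the paper's proof: Lemma \ref{lem:gradient_symmetry} gives orbit-tangency for the finite-time claim, and the decoupling of Proposition \ref{prop:decoupling} together with the four-dimensional base case Lemma \ref{lem:limit_symplectic} handles the limit. The one genuine stylistic difference is in the bookkeeping for the limit. The paper sandwiches $n_0(P_\infty)$ between two bounds: $n_0(P_\infty) \ge n_0(P)$ from orbit incidence (Proposition \ref{prop:orbit_incidence}), and $n_0(P_\infty) = f(P_\infty) \le n_0(P)$ from energy monotonicity together with the fact (Proposition \ref{prop:critical_values}) that the energy at a critical point equals $n_0$. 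You instead compute $n_0(P_\infty)$ directly by observing that the symplectic-orthogonal block structure of the K\"ahler decomposition is inherited by $P_\infty$, so its isotropic kernel is the direct sum of the isotropic kernels of the blocks, which are $W_0$ (contributing $n_0$), $W_+^J$ (complex, contributing $0$), and the four-dimensional limits $(P_j)_\infty$ (each complex by Lemma \ref{lem:limit_symplectic}, contributing $0$). This bypasses both Proposition \ref{prop:orbit_incidence} and Proposition \ref{prop:critical_values} for the limit claim, which is a modest but real streamlining; the tradeoff is that you need to verify the unstated but easy observation that the isotropic kernel of a direct sum of symplectically orthogonal blocks is the direct sum of the blockwise isotropic kernels. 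Both arguments are valid.
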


\begin{proof}
Let $P \in \Gr(\vec{n}; V)$. By Lemma \ref{lem:gradient_symmetry}, the gradient vector field is tangent to the symplectic orbit $\Sp(V) \cdot P$. Therefore, for all finite $t$, the flow $P(t)$ remains in the same orbit, implying $n_0(P(t)) = n_0(P)$.

Moreover, the limit $P_\infty$ must lie in the closure of this orbit (Proposition \ref{prop:orbit_incidence}):
\[ P_\infty \in \Crit (f) \cap \overline{\Gr(\vec{n}; V)} = \bigsqcup_{\substack{\vec{n}': n'_0 + 2n'_+ = k\\ n_0' \ge n_0}} \Gr_J(\vec{n}'; V). \]
Thus $n_0(P_\infty) \ge n_0(P)$.

Since $f(P(t))$ is monotonically decreasing $f(P_\infty) \le f(P(t))$ for all $t$. By Proposition \ref{prop:critical_values}, the left hand side is $f(P_\infty) = n_0(P_\infty)$.

By Proposition \ref{prop:energy_decomposition} and Lemma \ref{lem:limit_symplectic}, the right hand bound sharpens to $n_0(P)$ as $t \to \infty$. (In each four dimensional symplectic vector space $V_j^\theta$ of Proposition \ref{cor:kahler_decomp}, Lemma \ref{lem:limit_symplectic} shows the energy decays as $f_j(W_j(t))  \to 0$; summing over $j$ yields $f(P(t)) \to n_0(P)$.) So $n_0(P_\infty) \le n_0(P)$.
\end{proof}

\subsection{Stable manifolds as symplectic orbits}
\label{subsec:stable_manifolds}

\begin{theorem}
\label{thm:stable_orbits}
The stable manifold of the critical submanifold $\Gr_J(\vec{n}; V)$ coincides with the symplectic orbit of type $\vec{n}$:
\[ W^s(\Gr_J(\vec{n}; V)) = \Gr(\vec{n}; V). \]
\end{theorem}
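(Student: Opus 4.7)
The plan is to exploit the fact that, within the fixed Grassmannian $\Gr(k;V)$, the type $\vec n=(n_0,n_+,n_-)$ is completely determined by the single invariant $n_0$: from $k=n_0+2n_+$ we recover $n_+$, and then $n_-=n-n_0-n_+$. Consequently, showing $W^s(\Gr_J(\vec n;V))=\Gr(\vec n;V)$ reduces to showing that $n_0$ is constant along every gradient trajectory, endpoint included. That statement is already packaged as Lemma~\ref{lem:type_preservation}, so the proof will be short once the two inclusions are spelled out.

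For $\Gr(\vec n;V)\subseteq W^s(\Gr_J(\vec n;V))$, I would start with an arbitrary $P\in\Gr(\vec n;V)$. Compactness of $\Gr(k;V)$ gives a global negative gradient flow $P(t)=\phi_t(P)$, and real analyticity of $f$ together with the \L ojasiewicz gradient inequality (as cited in Section~\ref{subsec:gradient_flow}) guarantees a unique limit $P_\infty\in\Crit(f)=\bigsqcup_{\vec n'}\Gr_J(\vec n';V)$. Lemma~\ref{lem:type_preservation} yields $n_0(P_\infty)=n_0(P)=n_0$, and the dimension bookkeeping above forces $P_\infty\in\Gr_J(\vec n;V)$, hence $P\in W^s(\Gr_J(\vec n;V))$.

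For the reverse inclusion $W^s(\Gr_J(\vec n;V))\subseteq\Gr(\vec n;V)$, I would take $P\in W^s(\Gr_J(\vec n;V))$, so by definition its limit $P_\infty$ lies in $\Gr_J(\vec n;V)$ and satisfies $n_0(P_\infty)=n_0$. Invoking Lemma~\ref{lem:type_preservation} in the other direction gives $n_0(P)=n_0(P_\infty)=n_0$; again the $n_0$-determines-$\vec n$ remark pins $P$ to $\Gr(\vec n;V)$.

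There is no real obstacle here, since all the analytic content was absorbed into the earlier lemmas: the \L ojasiewicz inequality provides convergence of the flow, Lemma~\ref{lem:gradient_symmetry} (the gradient is a symplectic vector field) keeps $P(t)$ inside a single $\Sp(V)$-orbit during finite time, Proposition~\ref{prop:orbit_incidence} controls the limit's orbit via semicontinuity of $n_0$, and the four-dimensional computation in Lemma~\ref{lem:limit_symplectic} combined with the decoupling in Proposition~\ref{prop:decoupling} upgrades that semicontinuity to an equality. The only point that warrants care in the write-up is making explicit that, inside the ambient Grassmannian of fixed dimension $k$, the single integer $n_0$ determines the whole triple $\vec n$; once that is said, the theorem follows essentially by citation.
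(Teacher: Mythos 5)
Your proof is correct and follows exactly the paper's argument: both inclusions reduce to Lemma~\ref{lem:type_preservation} and the observation (left implicit in the paper, made explicit by you) that within $\Gr(k;V)$ the single invariant $n_0$ determines the full type $\vec n$ via $k=n_0+2n_+$ and $n=n_0+n_++n_-$.
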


\begin{proof}
If $P \in W^s(\Gr_J(\vec{n}; V))$, then by definition $n_0(P_\infty) = n_0$. By Lemma~\ref{lem:type_preservation}, we have $n_0(P) = n_0$. So $P \in \Gr(\vec{n}; V)$.
Conversely, if $P \in \Gr(\vec{n}; V)$, then by definition $n_0(P) = n_0$. By Lemma~\ref{lem:type_preservation} $n_0(P_\infty) = n_0$. So $P \in W^s(\Gr_J(\vec{n}; V))$.
\end{proof}

\begin{corollary}\label{cor:def_retraction}
    The Grassmannian $\Gr(\vec{n}; V)$ is homotopy equivalent to the compact homogeneous space $\U(n)/(\O(n_0) \times \U(n_+) \times \U(n_-))$.
\end{corollary}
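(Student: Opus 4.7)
The plan is to exhibit a strong deformation retraction
\[
r_s : \Gr(\vec{n}; V) \longrightarrow \Gr(\vec{n}; V),\qquad s \in [0,1],
\]
with $r_0 = \Id$ and $r_1(\Gr(\vec{n}; V)) = \Gr_J(\vec{n}; V)$, built directly from the negative gradient flow $\phi_t$ of $f$. Granted the existence of this retraction, the homotopy equivalence $\Gr(\vec{n}; V) \simeq \Gr_J(\vec{n}; V)$ is automatic, and the identification of the critical submanifold with the homogeneous space was essentially already done in the paper: by Theorem \ref{thm:Jcompatible_darboux}, the group $\U(V, J) \cong \U(n)$ acts transitively on $\Gr_J(\vec{n}; V)$, and by the stabilizer computation in Section \ref{subsec:unitary} the isotropy subgroup at any $W \in \Gr_J(\vec{n}; V)$ is $\O(n_0) \times \U(n_+) \times \U(n_-)$. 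So that part reduces to invoking the orbit-stabilizer identification.

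The substantive content is the retraction itself. First I would note that $\Gr(k; V)$ is compact, so $\phi_t$ is defined for all $t \ge 0$ and $\Gr(\vec{n}; V)$ is forward-invariant under $\phi_t$ by the type-preservation Lemma \ref{lem:type_preservation}. Second, by Theorem \ref{thm:stable_orbits}, every $P \in \Gr(\vec{n}; V)$ lies in the stable manifold of $\Gr_J(\vec{n}; V)$, and by the \L ojasiewicz gradient inequality for the real analytic function $f$ (already invoked in Section \ref{subsec:gradient_flow}) the trajectory $P(t) = \phi_t(P)$ has finite length and converges to a well-defined limit $P_\infty \in \Gr_J(\vec{n}; V)$. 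Define
\[
r_s(P) := \begin{cases} \phi_{\,s/(1-s)}(P) & 0 \le s < 1, \\ P_\infty & s = 1. \end{cases}
\]
Then $r_0 = \Id$, $r_s|_{\Gr_J(\vec{n}; V)} = \Id$ for all $s$ (since critical points are fixed), and $r_1$ lands in $\Gr_J(\vec{n}; V)$. What remains is the continuity of $(P, s) \mapsto r_s(P)$ on $\Gr(\vec{n}; V) \times [0, 1]$, and in particular at $s = 1$.

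The main obstacle is the joint continuity of $P \mapsto P_\infty$, i.e.\ continuous dependence of the \L ojasiewicz limit on the initial condition. For a general smooth function the limit map need not be continuous, but here we are in the Morse-Bott situation already established (Theorem \ref{thm:morse_bott}): near the critical submanifold $\Gr_J(\vec{n}; V)$, the Morse-Bott lemma supplies normal coordinates in which $f$ is a nondegenerate quadratic form on the normal bundle plus a constant along the critical set, and the gradient flow in these coordinates is essentially a linear contraction along the unstable/stable directions. Combined with the \L ojasiewicz-Simon estimate, this gives uniform convergence $P(t) \to P_\infty$ on compact subsets of $\Gr(\vec{n}; V)$ and continuity of the limit map. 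I would either cite the general Morse-Bott deformation retraction theorem (e.g.\ from \cite{AustinBraam}, or the detailed flow analysis in \cite{Nicolaescu}) or sketch the two-scale argument: for any $\varepsilon > 0$, pick $T$ large enough that $\phi_T(P)$ lies in a Morse-Bott normal neighborhood of $\Gr_J(\vec{n}; V)$, apply the local linear-flow estimate to conclude $d(\phi_t(P), P_\infty) < \varepsilon$ for all $t \ge T$, and invoke continuous dependence on initial data for finite times $t \le T$. Together with $r_0 = \Id$ and invariance of critical points, this yields the strong deformation retraction, and hence the stated homotopy equivalence.
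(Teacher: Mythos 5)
Your proposal is correct and takes essentially the same route as the paper: identify $\Gr(\vec n;V)$ with the stable manifold of $\Gr_J(\vec n;V)$ via Theorem~\ref{thm:stable_orbits}, retract along the negative gradient flow using the Morse--Bott property (Theorem~\ref{thm:morse_bott}), and read off the homogeneous-space description from Theorem~\ref{thm:Jcompatible_darboux} and the stabilizer computation in Section~\ref{subsec:unitary}. The only difference is that you spell out the construction of $r_s$ and the continuity at $s=1$, which the paper delegates to standard Morse--Bott theory (the references \cite{AustinBraam,Nicolaescu} you name). One small caution in your sketch: the Morse--Bott lemma normalizes $f$ but not the metric, so the gradient flow in those coordinates is not literally linear; the precise input is normal hyperbolicity of the critical submanifold, which follows from nondegeneracy of the Hessian on the normal bundle (established in Theorem~\ref{thm:morse_bott}) and yields the contraction estimate your two-scale argument needs.
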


\begin{proof}
By Theorem \ref{thm:stable_orbits}, the symplectic orbit $\Gr(\vec{n}; V)$ coincides with the stable manifold of the critical submanifold $\Gr_J(\vec{n}; V)$. Since the energy function $f$ is Morse-Bott (Theorem \ref{thm:morse_bott}), the negative gradient flow defines a deformation retraction of the stable manifold onto the critical submanifold. Thus, we have a homotopy equivalence:
\[
\Gr(\vec{n}; V) \simeq \Gr_J(\vec{n}; V).
\]
By Theorem \ref{thm:Jcompatible_darboux}, the unitary group $\U(V, J) \cong \U(n)$ acts transitively on $\Gr_J(\vec{n}; V)$. As shown in Section \ref{subsec:unitary}, the stabilizer of a $J$-compatible subspace is isomorphic to $\O(n_0) \times \U(n_+) \times \U(n_-)$. Therefore, the critical submanifold is diffeomorphic to the stated homogeneous space.
\end{proof}

\printbibliography
\end{document}